\documentclass[a4paper,11pt]{article}
\usepackage[english]{babel}
\usepackage{amsmath,amssymb,amsthm,graphicx,enumerate,mathrsfs,bbm,xcolor,latexsym, dsfont}

\usepackage[pagebackref, draft=false]{hyperref}
\usepackage[top=3cm, bottom=3cm, left=2.3cm, right=2.3cm]{geometry}
\usepackage{breqn}

\newcommand{\assign}{:=}
\newcommand{\cdummy}{\cdot}
\newcommand{\dD}{\mathrm{D}}
\newcommand{\dd}{\mathrm{d}}
\newcommand{\nobracket}{}

\newcommand{\tmop}[1]{\ensuremath{\operatorname{#1}}}
\newcommand{\tmtextit}[1]{{\itshape{#1}}}
\newenvironment{enumerateroman}{\begin{enumerate}[i.] }{\end{enumerate}}
\newtheorem{theorem}{Theorem}[section]
\newtheorem{corollary}[theorem]{Corollary}
\newtheorem{definition}[theorem]{Definition}
\newtheorem{lemma}[theorem]{Lemma}
\newtheorem{remark}[theorem]{Remark}
\newtheorem{proposition}[theorem]{Proposition}

\newcommand{\R}{\mathbb{R}}
\newcommand{\Z}{\mathbb{Z}}
\newcommand{\N}{\mathbb{N}}
\newcommand{\T}{\mathbb{T}}
\newcommand{\E}{\mathbb{E}}
\renewcommand{\P}{\mathbb{P}}

\newcommand{\1}{\mathds{1}}

\newcommand{\CC}{\mathscr{C}}
\newcommand{\CD}{\mathscr{D}}

\newcommand{\CS}{\mathscr{S}}
\newcommand{\CF}{\mathscr{F}}
\newcommand{\CH}{\mathscr{H}}

\newcommand{\CQ}{\mathcal{Q}}

\newcommand{\CW}{\mathscr{W}}

\begin{document}

\title{Energy solutions of KPZ are unique}
\author{
  Massimiliano Gubinelli\\
  Hausdorff Center for Mathematics\\
   \& Institute for Applied Mathematics\\
   Universit{\"a}t Bonn \\
  \texttt{gubinelli@iam.uni-bonn.de}
  \and
  Nicolas Perkowski\thanks{Financial support by the DFG via Research Unit FOR 2402 is gratefully acknowledged.} \\
  Institut f\"ur Mathematik \\
  Humboldt--Universit\"at zu Berlin \\
  \texttt{perkowsk@math.hu-berlin.de}
}

\maketitle

\begin{abstract}
   The Kardar-Parisi-Zhang (KPZ) equation is conjectured to universally describe the fluctuations of weakly asymmetric interface growth. Here we provide the first intrinsic well-posedness result for the KPZ equation on the real line by showing that its \emph{energy solutions} as introduced by Gon\c calves and Jara in~\cite{bib:goncalvesJara} and refined in~\cite{bib:gubinelliJara} are unique. Together with the convergence results of~\cite{bib:goncalvesJara} and many follow-up papers this establishes the weak KPZ universality conjecture for a wide class of models. Our proof builds on an observation of Funaki and Quastel~\cite{bib:funakiQuastel}. A remarkable consequence is that the energy solution to the KPZ equation is not equal to the Cole-Hopf solution, but it involves an additional drift $t/12$. 
\end{abstract}

\section{Introduction}

The aim of this paper is to establish the well-posedness of the martingale problem for the stationary conservative Stochastic Burgers Equation (SBE) on $\mathbb{R}$,
\begin{equation}\label{eq:SBE-intro}
   \dd u_t = \nu \Delta u_t \dd t + \lambda \partial_x u^2_t \dd t + \sqrt{D} \partial_x \dd W_t ,
\end{equation}
where $u \colon
\mathbb{R}_+ \times \mathbb{R} \rightarrow \mathbb{R}$ is a  continuous process in $t$ taking values in the space of (Schwartz) distributions over $\mathbb{R}$, $\lambda \in \mathbb{R}$, $\nu, D > 0$, and $W$ is a space-time white noise. A direct consequence will be the well-posedness of the martingale problem for the quasi-stationary Kardar--Parisi--Zhang (KPZ) equation 
\begin{equation}\label{eq:KPZ-intro}
   \dd h_t = \Delta h_t \dd t + \lambda ((\partial_x h_t)^2 - \infty) \dd t + \sqrt{2} \dd W_t,
\end{equation}
where  $h \colon
\mathbb{R}_+ \times \mathbb{R} \rightarrow \mathbb{R}$ is a continuous process and for any $t$ the law of $h_t(\cdot) - h_t(0)$ is a two-sided Brownian motion on $\mathbb{R}$. The SBE describes the evolution of the weak derivative $u_t(x) = \partial_x h_t(x)$ of the solution to the KPZ equation $h$. Our uniqueness proof also establishes that $h$ is related to the  solution of the linear multiplicative Stochastic Heat equation (SHE),
\begin{equation}\label{eq:SHE-intro} \dd Z_t = \Delta Z_t \dd t + \sqrt{2} \lambda Z_t
  \dd W_t,
\end{equation}
by the Cole--Hopf transformation 
\begin{equation}\label{eq:CH-intro}
   h_t = \lambda^{- 1} \log Z_t + \frac{\lambda^3}{12} t, \qquad t \ge 0.
\end{equation}
The stochastic heat equation allows a formulation via standard It\^o calculus and martingale, weak, or mild solutions in suitable weighted spaces of continuous adapted processes. The stochastic Burgers equation and the KPZ equation, on the other hand, cannot be studied in standard spaces due to the fact that the non-linearity is ill-defined, essentially because the trajectories of the solutions do not possess enough spatial regularity. Indeed, solutions of the  KPZ equation are of H\"older regularity less than $1/2$ in space, so a priori the point-wise square of their derivatives cannot be defined. 

Despite this mathematical difficulty, the KPZ equation is expected to be a faithful description of the large scale properties of one-dimensional growth phenomena. This was the original motivation which led Kardar, Parisi and Zhang~\cite{bib:kpz} to study the equation and both experimental and theoretical physics arguments have, since then, confirmed their analysis. The rigorous study of the KPZ equation and its relation with the SHE has started with the work of Bertini and Giacomin~\cite{bib:bertiniGiacomin} on the scaling limit of the weakly asymmetric exclusion process (WASEP). Starting from this discrete Markov process on ${\{0,1\}}^\Z$ and performing a suitable space-time rescaling and recentering, they were able to prove that its density fluctuation field converges to a random field $u$ which is linked to the solution of the SHE by the Cole--Hopf transformation~\eqref{eq:CH-intro}. Incidentally they had to add exactly the strange $1/12$ drift in order to establish their result. Their work clarifies that any physically relevant notion of solution to the (still conjectural) equations~\eqref{eq:SBE-intro} and~\eqref{eq:KPZ-intro} needs to be transformed to the SHE by the Cole--Hopf transformation and also that the SBE should allow the law of the space white noise as invariant measure. A priori these insights are of little help in formulating KPZ/SBE, since given a solution to the SHE $Z$ it is not possible to apply It\^o's formula to $\lambda^{-1} \log Z$, and in particular the inverse Cole--Hopf transformation is ill-defined. It should be noted that the main difficulty of equations~\eqref{eq:SBE-intro} and~\eqref{eq:KPZ-intro} lies in the spatial irregularity and that no useful martingales in the space-variable are known, a fact which prevents an analysis via It\^o's stochastic integration theory. Moreover, the convergence result of~\cite{bib:bertiniGiacomin} relies strongly on the particular structure of the WASEP and does not  have many generalizations because most models behave quite badly under exponentiation (Cole--Hopf transformation); see~\cite{bib:demboTsai, bib:corwinTsai, bib:corwinShenTsai, bib:labbe} for examples of models that do admit a useful Cole--Hopf transformation.

After the work of Bertini and Giacomin there have been various attempts to study the SBE via Gaussian analysis tools taking into account the necessary invariance of the space white noise. A possible definition based on the Wick renormalized product associated to the driving space-time white noise has been ruled out because it lacks the properties expected from the physical solution~\cite{bib:chan}. Assing~\cite{bib:assing} has been the first, to our knowledge, to attempt a martingale problem formulation of the SBE. He defines a formal infinite-dimensional generator for the process, essentially as a quadratic form with dense domain, but is not able to prove its closability. The singular drift, which is ill-defined pointwise, make sense as a distribution on the Gaussian Hilbert space associated to the space white noise, however this distributional nature prevents to identify a suitable domain for the formal generator.

The martingale problem approach has been subsequently developed by Gon\c{c}alves and Jara~\cite{bib:gjEnergy, bib:goncalvesJara}\footnote{The paper~\cite{bib:goncalvesJara} is the revised published version of~\cite{bib:gjEnergy}.}. Their key insight is that while the drift in~\eqref{eq:SBE-intro} is difficult to handle in a Markovian picture (that is, as a function on the state space of the process) it makes perfect sense in a path-wise picture. They proved in particular that a large class of particle systems (which generalize the WASEP studied by Bertini and Giacomin) have fluctuations that subsequentially converge to random fields $u$ which are solutions of a generalized martingale problem for~\eqref{eq:SBE-intro} where the singular non-linear drift $\partial_x (u(t,x)^2)$ is a well defined  \emph{space-time} distributional random field. Avoiding to describe a Markovian generator for the process, they manage to introduce an auxiliary process which plays the same role in the formulation of the martingale problem. Subsequent work of Jara and Gubinelli~\cite{bib:gubinelliJara} gave a different definition of the martingale problem via a forward-backward description. The solution of the martingale problem is a Dirichlet process, that is the sum of a martingale and a finite variation process. This property and the forward-backward decomposition of the drift are reminiscent of Lyons--Zheng processes and in general of the theory of Markov processes described by Dirichlet forms, however a complete understanding of the matter is at the moment not well developed and the martingale problem formulation avoids the subtleties of the Markovian setting. Gon\c{c}alves and Jara called the solutions of this generalized martingale problem \emph{energy solutions} for the SBE/KPZ equation. 

Following~\cite{bib:goncalvesJara} it has been shown for a variety of models that their fluctuations subsequentially converge to energy solutions of the KPZ equation or the SBE, for example for zero range processes and kinetically constrained exclusion processes in~\cite{bib:goncalvesJaraSethuraman}, various exclusion processes in~\cite{bib:goncalvesSimon, bib:francoGoncalvesSimon, bib:blondelGoncalvesSimon, bib:goncalvesJaraLongrange}, interacting Brownian motions in~\cite{bib:diehlGubinelliPerkowski}, and Hairer-Quastel type SPDEs in~\cite{bib:gubinelliHQ}. This is coherent with the conjecture that the SBE/KPZ equation describes the universal behavior of a wide class of conservative dynamics or interface growth models in the particular limit where the asymmetry is ``small'' (depending on the spatial scale), the so called \emph{weak KPZ universality conjecture}, see~\cite{bib:corwin, bib:quastel, bib:quastelSpohn, bib:spohnSurvey}. In order to fully establish the conjecture for the models above, the missing step was a proof of uniqueness of energy solutions. This question remained open for some time during which it was not clear if the notion is strong enough to guarantee uniqueness or if it is too weak to expect well-posedness. Here we present a proof of uniqueness for the refined energy solutions of~\cite{bib:gubinelliJara}, on the full line and on the torus, thereby finally establishing the well-posedness of the martingale problem and its expected relation with the SHE via the Cole--Hopf transform. The proof follows the strategy developed by Funaki and Quastel in~\cite{bib:funakiQuastel}, namely we map a mollified energy solution to the stochastic heat equation via the Cole--Hopf transform and use a version of the Boltzmann--Gibbs principle to control the various error terms arising from the transformation and to derive the relation~\eqref{eq:CH-intro} in the limit as we take the mollification away. A direct corollary of our results is the proof of the weak KPZ universality conjecture for all the models in the literature which have been shown to converge to energy solutions.
 
Shortly after the introduction of energy solutions the fundamental work~\cite{bib:hairerKPZ} of Hairer on the KPZ equation appeared, where he established a path-wise notion of solution using Lyons' theory of rough paths to provide a definition of the non-linear term as a continuous bilinear functional on a suitable Banach space of functions. Existence and uniqueness were then readily established by fixed point methods. This breakthrough developed into a general theory of singular SPDEs, Hairer's theory of regularity structures~\cite{bib:hairerRegularity}, which provides the right analytic setting to control the singular terms appearing in stochastic PDEs like the SBE/KPZ equation, their generalizations, but also in other important SPDEs like the stochastic Allen--Cahn equation in dimensions $d=2,3$ and the (generalized) parabolic Anderson model in $d=2,3$. The work of the authors of this paper together with P. Imkeller on the use of paradifferential calculus~\cite{bib:paracontrolled} and the work of Kupiainen based on renormalization group (RG) techniques~\cite{bib:kupiainen, bib:kupiainenMarcozzi} opened other, alternative ways to tackle singular SPDEs. All these approaches have in common that they control the a priori ill-defined nonlinearities in the equation using path-wise (deterministic) arguments. However, from the point of view of the weak KPZ universality conjecture the path-wise approach is difficult to use and, for now, there are only few convergence results using either regularity structures, paracontrolled distributions or RG techniques~\cite{bib:hairerQuastel, bib:hairerShen, bib:kpzReloaded, bib:hoshino}. The martingale approach has the advantage that it is easy to implement, especially starting from discrete particle systems which often do not have the semi-linear structure that is at the base of the path-wise theories.

The main limitation of the martingale approach to the SBE/KPZ equation is that currently it works only at stationarity. Using tools from the theory of hydrodynamic limits it seems possible to extend the results to initial conditions with small relative entropy with respect to the stationary measure. However, this has not been done yet and dealing with even more singular initial conditions  is a completely open problem. On the other hand, with energy solutions it is relatively easy to work on the real line, while in the path-wise approach this requires dealing with weighted function spaces and the question of uniqueness seems still not clear.

To summarize, the main contribution of the present paper is a proof of uniqueness of energy solutions (in the refined formulation of Jara and Gubinelli~\cite{bib:gubinelliJara}) on the real line and on the torus. We start by introducing the notion of solution and the space of trajectories where solutions live in Section~\ref{sec:energySolutions}. Subsequently we discuss in Section~\ref{sec:additive} several key estimates available in this space, estimates which allow to control a large class of additive functional. After these preliminaries we show in Section~\ref{sec:proof} how to implement the Cole--Hopf transformation at the level of energy solutions and, by a careful control of some error terms, how to establish the It\^o formula which proves the mapping from the SBE to the SHE. Using the uniqueness for the SHE we conclude the uniqueness of energy solutions. In Appendix~\ref{app:periodic} we add some details on how to modify the proof to deal with the case of periodic boundary conditions.

\paragraph{Notation} The Schwartz space on $\R^d$ is denoted with $\CS(\R^d)$ and its dual $\CS'(\R^d)$ is the space of tempered distributions. The notation $\CD'(\R^d)$ refers to the distributions on $\R^d$, the dual space of $C^\infty_c(\R^d)$. The Fourier transform of $u \in \CS'(\R^d)$ is denoted with $\hat u = \CF u$, and we use the normalization $\hat u (\xi) = \int_{\R^d} e^{2\pi i \xi \cdot x} u(x) \dd x$. For $\alpha \in \R$ we use the following slightly unusual (but of course equivalent to the usual) norm for the space $H^\alpha(\R^d)$
\[
   H^\alpha(\R^d) \assign \left\{u \in \CS'(\R^d): \|u\|_{H^\alpha(\R^d)}^2 = \int_{\R^d} |\hat{u}(\xi)|^2 (1 + |2\pi \xi|^{2\alpha}) < \infty \right\}.
\]

Throughout we work with the quadratic variation in the sense of Russo and Vallois~\cite{bib:russoVallois}: A real-valued stochastic process $(X_t)_{t \geqslant 0}$ has quadratic variation $([X]_t)_{t \geqslant 0}$ if
\[
   [X]_t = \lim_{\varepsilon \to 0} \int_0^t \frac{1}{\varepsilon} (X_{s+\varepsilon} - X_s)^2 \dd s,
\]
where the convergence is uniform on compacts in probability. If $X$ is a continuous semimartingale, then $[X]$ is nothing but its semimartingale quadratic variation. Despite the fact that we deal with continuous processes we use the notation $[\cdot]$ for the quadratic variation, because $\langle \cdot, \cdot \rangle$ will be reserved for the inner products in various Hilbert spaces.

\section{Controlled processes and energy solutions}\label{sec:energySolutions}

\subsection{Burgers equation}

In this section we follow Gon\c calves and Jara~\cite{bib:goncalvesJara} and Gubinelli and Jara~\cite{bib:gubinelliJara} in
defining stationary energy solutions to the stochastic Burgers equation $u \colon
\mathbb{R}_+ \times \mathbb{R} \rightarrow \mathbb{R}$,
\begin{equation}\label{eq:SBE-param}
   \dd u_t = \nu \Delta u_t \dd t + \lambda \partial_x u^2_t \dd t + \sqrt{D} \partial_x \dd W_t ,
\end{equation}
where $\lambda \in \mathbb{R}$, $\nu, \sqrt{D} > 0$, and $W$ is a space-time white noise. Recall that from a probabilistic point of
view the key difficulty in making sense of~(\ref{eq:SBE-param}) is that we expect
the law of (a multiple of) the white noise on $\R$ to be invariant under the dynamics, but the square
of the white noise can only be defined as a Hida distribution and not as a
random variable. To overcome this problem we first introduce a class of
processes $u$ which at fixed times are distributed as the white noise but for which the nonlinear term $\partial_x u^2$ is defined as
a space-time distribution. In this class of processes it then makes sense to
look for solutions of the Burgers equation~(\ref{eq:SBE-param}).

If $(\Omega, \mathcal{F}, (\mathcal{F}_t)_{t \geqslant 0}, \mathbb{P})$ is a
filtered probability space, then an adapted process $W$ with trajectories in
$C \left( \mathbb{R}_+, \CS' (\mathbb{R}) \right)$ is called a \emph{space-time
white noise} on that space if for all $\varphi \in \CS (\mathbb{R})$ the
process $(W_t (\varphi))_{t \geqslant 0}$ is a Brownian motion in the
filtration $(\mathcal{F}_t)_{t \geqslant 0}$ with variance $\mathbb{E} [W_t (\varphi)^2] = t
\| \varphi \|^2_{L^2 (\mathbb{R})}$ for all $t \geqslant 0$. A \emph{(space) white
noise with variance $\sigma^2$} is a random variable $\eta$ with values in $\CS' (\mathbb{R})$, such
that $(\eta (\varphi))_{\varphi \in \CS (\mathbb{R})}$ is a centered Gaussian process with covariance $\mathbb{E} [\eta (\varphi) \eta (\psi)] = \sigma^2
\langle \varphi, \psi \rangle_{L^2 (\mathbb{R})}$. If $\sigma = 1$ we simply call $\eta$ a white noise. Throughout we write $\mu$ for the law of the white noise on $\CS' (\mathbb{R})$.

\begin{definition}[Controlled process]
  Let $\nu, D > 0$, let $W$ be a space-time white noise on the filtered
  probability space $(\Omega, \mathcal{F}, (\mathcal{F}_t)_{t \geqslant 0},
  \mathbb{P})$, and let $\eta$ be a $\mathcal{F}_0$-measurable space white
  noise. Denote with $\mathcal{Q}_{\nu, D} (W, \eta)$ the space of pairs $(u,
  \mathcal{A})$ of adapted stochastic processes with trajectories in $C \left(
  \mathbb{R}_+, \CS' (\mathbb{R}) \right)$ such that
  \begin{itemize}
    \item[i)] $u_0 = \sqrt{D / (2 \nu)} \eta$ and the law of $u_t$ is that of a white
    noise with variance $D / (2 \nu)$ for all $t \geqslant 0$;
    
    \item[ii)] for any test function $\varphi \in \CS (\mathbb{R})$ the
    process $t \mapsto \mathcal{A}_t (\varphi)$ is almost surely of zero
    quadratic variation, satisfies $\mathcal{A}_0 (\varphi) = 0$, and the pair
    $(u (\varphi), \mathcal{A} (\varphi)) $ solves the equation
    \begin{equation}
      \label{eq:controlled-decomposition} u_t (\varphi) = u_0(\varphi) + \nu
      \int_0^t u_s (\Delta \varphi) \dd s +\mathcal{A}_t (\varphi) -
      \sqrt{D} W_t (\partial_x \varphi), \qquad t \geqslant 0 ;
    \end{equation}
    \item[iii)] for any $T > 0$ the time-reversed processes $\hat{u}_t = u_{T
    - t}$, $\hat{\mathcal{A}}_t = - (\mathcal{A}_T -\mathcal{A}_{T -
    t})$ satisfy
    \[ \hat{u}_t (\varphi) = \hat{u}_0 (\varphi) + \nu \int_0^t \hat{u}_s
       (\Delta \varphi) \dd s + \hat{\mathcal{A}}_t (\varphi) - \sqrt{D}
       \hat{W}_t (\partial_x \varphi), \qquad t \in [0, T], \varphi \in \CS(\R), \]
    where $\hat{W}$ is a space-time white noise in the filtration generated by
    $(\hat{u}, \hat{\mathcal{A}})$.
  \end{itemize}
  If there exist a space-time white noise $W$ and a white noise $\eta$ such
  that $(u, \mathcal{A}) \in \mathcal{Q}_{\nu, D} (W, \eta)$, then we simply
  write $(u, \mathcal{A}) \in \mathcal{Q}_{\nu, D}$. For $\nu=1$ and $D=2$  we omit the parameters in the notation and write $(u, \mathcal{A}) \in \mathcal{Q}(W, \eta)$ respectively $(u, \mathcal{A}) \in \mathcal{Q}$.
\end{definition}

We will see that $\CQ_{\nu,D}(W,\eta)$ contains the probabilistically strong solution to~\eqref{eq:SBE-param}, while $\CQ_{\nu,D}$ is the space in which to look for probabilistically weak solutions.

Controlled processes were first introduced in~\cite{bib:gubinelliJara} on the
circle, and the definition on the real line is essentially the same. For
$\mathcal{A}= 0$ the process $(X, 0) \in \mathcal{Q}_{\nu,D}$ is the stationary
Ornstein--Uhlenbeck process. It is the unique-in-law solution to the SPDE
\[
   \dd X_t = \nu \Delta X_t \dd t + \sqrt{D} \partial_x \dd W_t
\]
with initial condition $u_0 \sim \sqrt{D/(2\nu)} \eta$. Allowing $\mathcal{A} \neq 0$ has the
intuitive meaning of considering perturbations of the Ornstein--Uhlenbeck
process with antisymmetric drifts of zero quadratic variation. In this sense
we say that a couple $(u, \mathcal{A}) \in \mathcal{Q}_{\nu,D}$ is a process
\tmtextit{controlled} by the Ornstein--Uhlenbeck process.

As we will see below, for controlled processes we are able to construct some
interesting additive functionals. In particular, for any controlled process
the Burgers drift makes sense as a space-time distribution:

\begin{proposition}\label{prop:burgers drift}
  Let $(u, \mathcal{A}) \in \mathcal{Q}_{\nu,D}$, let $\rho
  \in L^1 (\mathbb{R}) \cap L^2 (\mathbb{R})$ with $\int_{\mathbb{R}} \rho
  (x) \dd x = 1$, and write $\rho^N \assign N \rho (N \cdummy)$ for $N \in
  \mathbb{N}$. Then for all $\varphi \in \CS (\mathbb{R})$ the process
  \[ \int_0^t (u_s \ast \rho^N)^2 (- \partial_x \varphi) \dd s, \qquad t
     \geqslant 0, \]
  converges uniformly on compacts in probability to a limiting process that we
  denote with
  \[ \int_0^t \partial_x u_s^2 \dd s (\varphi) , \qquad t \geqslant 0. \]
  As the notation suggests, this limit does not depend on the function $\rho$.
\end{proposition}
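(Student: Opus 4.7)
The plan is to prove the sequence is Cauchy in probability uniformly on compact time intervals and to identify the limit as independent of $\rho$. The argument combines a chaos decomposition of the spatial integrand with the additive-functional estimate for controlled processes to be developed in Section~\ref{sec:additive}.

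First I would reduce to a second-chaos functional. Since at fixed time $u_s$ is distributed under the law $\mu$ of a white noise with variance $\sigma^2 = D/(2\nu)$, one has the Wick decomposition
\[
   (u_s * \rho^N)^2(x) = \, :(u_s * \rho^N)^2\!:(x) + \sigma^2 \|\rho^N\|_{L^2}^2 ,
\]
and the constant term disappears after pairing with $-\partial_x \varphi$ because $\int_\R \partial_x \varphi = 0$. Hence $\int_0^t (u_s * \rho^N)^2(-\partial_x \varphi) \dd s = \int_0^t H_N(u_s) \dd s$, where $H_N(u) = \int_\R :(u*\rho^N)^2\!:(x) \, (-\partial_x \varphi)(x) \dd x$ is a second-chaos functional with symmetric Fourier kernel $\hat{k}_N(\xi,\eta) = \hat\rho(\xi/N)\hat\rho(\eta/N) \cdot \widehat{-\partial_x\varphi}(\xi+\eta)$.

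Next I would observe that, although $\|H_N\|_{L^2(\mu)}$ diverges as $N \to \infty$, the sequence is Cauchy in the weaker norm
\[
   \|F\|_{-1}^2 \;\sim\; \int\!\!\int \frac{|\hat k(\xi,\eta)|^2}{\nu(|\xi|^2+|\eta|^2)} \dd \xi \dd \eta
\]
associated with the Ornstein--Uhlenbeck generator on the second chaos. Indeed, after the substitution $(\xi,\eta) \mapsto (\xi+\eta, \xi-\eta)$ the factor $|\xi|^2 + |\eta|^2$ regularizes the integral, and since $\varphi \in \CS(\R)$ and $\hat\rho(\xi/N) \to 1$ pointwise with $|\hat\rho| \le 1$, dominated convergence yields $\|H_N - H_M\|_{-1} \to 0$ as $N, M \to \infty$. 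I would then invoke the It\^o--Kipnis--Varadhan estimate for controlled processes,
\[
   \E\Big[\sup_{t \leq T}\Big|\int_0^t F(u_s) \dd s\Big|^2\Big] \leq C T \, \|F\|_{-1}^2 ,
\]
valid for any $(u, \CA) \in \CQ_{\nu,D}$ and any mean-zero second-chaos $F$, and apply it to $F = H_N - H_M$ to get Cauchyness in probability uniformly on $[0,T]$. The independence of the limit from $\rho$ follows at once: replacing $\rho^N$ by $(\rho')^N$ for another admissible kernel $\rho'$ produces functionals $H_N'$ with $\|H_N - H_N'\|_{-1} \to 0$, so both time-integrated limits agree.

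The main obstacle is the additive-functional estimate above. For the pure Ornstein--Uhlenbeck process ($\CA = 0$) it is the classical Kipnis--Varadhan inequality proved via the It\^o trick, applying It\^o's formula to $\CL^{-1}F(u_t)$ and bounding the martingale part by $T\|F\|_{-1}^2$. For a general controlled process the same manipulation produces extra contributions from $\CA$ which a priori cannot be controlled by $L^2$ bounds on $F$; they are absorbed by combining the forward It\^o expansion with the time-reversed one supplied by item~(iii) of the definition of $\CQ_{\nu,D}$. The identity $\CA_T - \CA_{T-t} = -\hat\CA_t$ together with the zero-quadratic-variation hypothesis on $\CA$ (and $\hat \CA$) ensures that the drift contributions cancel when one averages the forward and backward expansions, leaving only martingale pieces whose quadratic variation is computable from $\CL$ and yields precisely $\|F\|_{-1}^2$.
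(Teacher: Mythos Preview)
Your argument is correct and close in spirit to the paper's, but the route is genuinely a bit different and worth flagging. The paper does not prove Proposition~\ref{prop:burgers drift} directly: it first proves the more general Proposition~\ref{prop:kpz drift} about $\int_0^t [(u_s\ast\rho^N)^2 - \|\rho^N\|_{L^2}^2](\varphi)\,\dd s$ for \emph{arbitrary} $\varphi\in\CS(\R)$, and only then specialises to $\varphi\mapsto -\partial_x\varphi$. For general $\varphi$ the second-chaos functional is \emph{not} in $\dot{\CH}^{-1}$ (there is an infrared divergence at small frequencies because $\hat\varphi(0)\neq 0$), so the paper cannot apply Kipnis--Varadhan directly; instead it passes through the resolvent $(1-L_0)H=F$, bounds $\int_0^t H(u_s)\,\dd s$ crudely by stationarity and Gaussian hypercontractivity, and controls $\int_0^t L_0 H(u_s)\,\dd s$ via the It\^o trick with the $\|\cdot\|_1$-norm of $H$. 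You instead exploit the specific structure of the Burgers statement: testing against $-\partial_x\varphi$ both removes the diverging renormalisation constant and supplies an extra factor $|\xi+\eta|$ in Fourier space that kills the infrared singularity, so $H_N\in \dot{\CH}^{-1}\cap L^2(\mu)$ uniformly and Corollary~\ref{cor:KV} applies to $H_N-H_M$ without any resolvent detour. This is a legitimate and slightly shorter proof of Proposition~\ref{prop:burgers drift}; the price is that it does not yield Proposition~\ref{prop:kpz drift}, which the paper needs later for the KPZ equation. One small slip: you only have $|\hat\rho|\le \|\rho\|_{L^1}$, not $|\hat\rho|\le 1$, but this changes nothing in the dominated-convergence step.
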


The proof will be given in Section~\ref{sec:nonlinearity} below. Note that the convolution $u_s \ast \rho^N$ is well-defined for $\rho^N \in L^2 (\mathbb{R}) \cap L^1 (\mathbb{R})$ and not only for $\rho^N \in \CS
(\mathbb{R})$ because at fixed times $u_s$ is a white noise.

Now we can define what it means for a controlled process to solve the
stochastic Burgers equation.

\begin{definition}
  Let $W$ be a space-time white noise on $(\Omega, \mathcal{F}, (\mathcal{F}_t)_{t \geqslant 0}, \mathbb{P})$, let
  $\eta$ be a $\mathcal{F}_0$-measurable space white noise, and let $(u,
  \mathcal{A}) \in \mathcal{Q}_{\nu,D} (W, \eta)$. Then $u$ is called a
  \emph{strong stationary solution} to the stochastic Burgers equation
  \begin{equation}\label{eq:SBE strong}
     \dd u_t = \nu \Delta u_t \dd t + \lambda \partial_x u_t^2 \dd t + \sqrt{D} \partial_x \dd W_t, \qquad u_0 = \sqrt{\frac{D}{2\nu}}\eta,
  \end{equation}
  if $\mathcal{A} (\varphi) = \lambda \int_0^{\cdummy} \partial_x u_s^2
   \dd s(\varphi)$ for all $\varphi \in \CS(\R)$. If $(u, \mathcal{A}) \in \mathcal{Q}_{\nu,D}$ and $\mathcal{A} (\varphi) = \lambda
  \int_0^{\cdummy} \partial_x u_s^2  \dd s(\varphi)$ for all $\varphi \in
  \CS(\R)$, then $u$ is called an \emph{energy solution} to~(\ref{eq:SBE
  strong}).
\end{definition}

The notion of energy solutions was introduced in a slightly weaker formulation
by Gon{\c c}alves and Jara in~\cite{bib:goncalvesJara} and the formulation here is
due to~\cite{bib:gubinelliJara}. Note that strong stationary solutions correspond
to probabilistically strong solution, while energy solutions are
probabilistically weak in the sense that we do not fix the probability space
and the noise driving the equation. Our main result is the following theorem
which establishes the strong and weak uniqueness of our solutions.

\begin{theorem}\label{thm:burgers-uniqueness}
  Let $W$ be a space-time white noise on the filtered probability space
  $(\Omega, \mathcal{F}, (\mathcal{F}_t)_{t \geqslant 0}, \mathbb{P})$ and
  let $\eta$ be a $\mathcal{F}_0$-measurable space white noise. Then the
  strong stationary $u$ solution to
  \begin{equation}\label{eq:thm-SBE}
     \dd u_t = \nu \Delta u_t \dd t + \lambda \partial_x  u_t^2 \dd t + \sqrt{D} \partial_x \dd W_t, \qquad u_0 =  \sqrt{\frac{D}{2\nu}} \eta,
  \end{equation}
  is unique up to indistinguishability. Moreover, for $\lambda \neq 0$ we have
  $u = (\nu/\lambda) \partial_x \log Z^{(\sigma)}$, where the derivative is taken in the
  distributional sense and $Z^{(\sigma)}$ is the unique solution to the linear multiplicative stochastic
  heat equation
  \begin{equation}\label{eq:thm-SHE} \dd Z^{(\sigma)}_t = \nu \Delta Z^{(\sigma)}_t \dd t + \frac{\lambda \sqrt{D}}{\nu} Z^{(\sigma)}_t
    \dd W_t, \qquad Z^{(\sigma)}_0 (x) = e^{\frac{\lambda}{\nu} \sqrt{\frac{D}{2\nu}} \eta (\Theta^{(\sigma)}_x)},
  \end{equation}
  and where $\Theta^{(\sigma)}_x =\1_{(- \infty, x]} - \int_{\cdummy}^{\infty}
  \sigma (y) \dd y$ for an arbitrary $\sigma \in C^{\infty}_c (\mathbb{R})$ with $\sigma \geqslant 0$ and
  $\int_{\mathbb{R}} \sigma (x) \dd x = 1$. Consequently, any two energy solutions of~(\ref{eq:thm-SBE}) have the
  same law.
\end{theorem}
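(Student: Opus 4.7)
The plan is to carry out a \emph{mollified Cole--Hopf transformation} on the controlled process $(u,\mathcal{A})$, following Funaki--Quastel~\cite{bib:funakiQuastel}. Fix a strong stationary solution $u$ driven by $(W,\eta)$, a symmetric mollifier $\rho^\varepsilon(x) = \varepsilon^{-1}\rho(x/\varepsilon)$, and define the smoothed height and its exponential
\[
   h^\varepsilon_t(x) \assign u_t(\Theta^{(\sigma)}_x \ast \rho^\varepsilon), \qquad Z^\varepsilon_t(x) \assign \exp\!\Bigl(\tfrac{\lambda}{\nu} h^\varepsilon_t(x)\Bigr).
\]
Since $\partial_x(\Theta^{(\sigma)}_x \ast \rho^\varepsilon)(y) = \rho^\varepsilon(y-x)$, a short calculation gives $\partial_x h^\varepsilon_t(x) = (u_t\ast\rho^\varepsilon)(x)$, which we denote $u^\varepsilon_t(x)$. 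By~\eqref{eq:controlled-decomposition} and Proposition~\ref{prop:burgers drift}, for each $x$ the process $t\mapsto h^\varepsilon_t(x)$ is a continuous semimartingale with deterministic bracket $D\|\partial(\Theta^{(\sigma)}_x\ast\rho^\varepsilon)\|_{L^2}^2\,t$ and drift $\nu\Delta h^\varepsilon + \lambda\,\dd\mathcal{B}^\varepsilon$, where $\mathcal{B}^\varepsilon_t(x) := \int_0^t \partial_x u_s^2\,ds(\Theta^{(\sigma)}_x\ast\rho^\varepsilon)$ is the mollified Burgers drift.

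Apply It\^o's formula to $Z^\varepsilon$ and use the identity $\nu\Delta Z^\varepsilon = \alpha\nu Z^\varepsilon \Delta h^\varepsilon + \alpha^2\nu Z^\varepsilon (u^\varepsilon)^2$ with $\alpha = \lambda/\nu$ to regroup. The Cole--Hopf exponent is chosen precisely so that $\alpha\lambda = \alpha^2\nu$: this makes the generated $-\alpha^2\nu Z^\varepsilon(u^\varepsilon)^2$ combine with the Burgers drift $\alpha\lambda Z^\varepsilon\,\dd\mathcal{B}^\varepsilon/\dd t$ into a single additive functional whose mean $\alpha\lambda\E[(u^\varepsilon)^2] = \tfrac12\alpha^2 D\|\rho^\varepsilon\|_{L^2}^2$ then cancels against the leading divergence of the It\^o correction $\tfrac12\alpha^2\,\dd[M^\varepsilon]/\dd t = \tfrac12\alpha^2 D\|\partial(\Theta^{(\sigma)}_x\ast\rho^\varepsilon)\|_{L^2}^2\,dt$. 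What remains is an equation of the form
\[
   Z^\varepsilon_t = Z^\varepsilon_0 + \int_0^t \nu\Delta Z^\varepsilon_s\,ds + \tfrac{\lambda\sqrt{D}}{\nu}\int_0^t Z^\varepsilon_s\,\dd M^\varepsilon_s + R^\varepsilon_t,
\]
where $R^\varepsilon$ is dominated by the fluctuation $\alpha^2\nu\int_0^t Z^\varepsilon_s\bigl[\dd\mathcal{B}^\varepsilon_s/\dd s - ((u_s^\varepsilon)^2 - \E[(u^\varepsilon)^2])\bigr]\dd s$, together with additional finite drifts generated by the $\sigma$-correction in $\Theta^{(\sigma)}_x$.

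The main obstacle is to show that $R^\varepsilon(\varphi)\to 0$ uniformly on compacts in probability for every $\varphi\in\CS(\R)$. The bracketed expression in the fluctuation is an additive functional of the controlled process $u$, which is precisely where the Boltzmann--Gibbs / It\^o-trick estimates of Section~\ref{sec:additive} enter. The time-reversibility condition (iii) in the definition of $\CQ_{\nu,D}$ provides a forward--backward decomposition that yields two-sided $L^2$ bounds of Lyons--Zheng type in terms of a Gaussian quadratic form on the white-noise Hilbert space; a Gaussian calculation shows these bounds vanish as $\varepsilon\to 0$, reflecting the fact that the two regularizations of the Wick square of $u$ underlying $\dd\mathcal{B}^\varepsilon$ and $(u^\varepsilon)^2-\E[(u^\varepsilon)^2]$ converge to the same limit.

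Once $R^\varepsilon\to 0$, the $\sigma$-generated drifts are absorbed into a deterministic integrating factor that will be killed by the $\partial_x\log$ operation at the end. Classical stability theory for the linear multiplicative SHE (pathwise uniqueness in weighted adapted $L^2$-spaces, via It\^o calculus) then identifies the limit of the renormalized $Z^\varepsilon$ with $Z^{(\sigma)}$, the unique solution of~\eqref{eq:thm-SHE}. Since $u^\varepsilon_t = (\nu/\lambda)\partial_x\log Z^\varepsilon_t$, passing $\varepsilon\to 0$ in $\CS'(\R)$ yields $u = (\nu/\lambda)\partial_x\log Z^{(\sigma)}$. This represents $u$ as a deterministic measurable functional of $(W,\eta)$, giving strong uniqueness; weak uniqueness of energy solutions then follows by a Yamada--Watanabe type argument, realizing any energy solution as a strong solution on an enlarged probability space where the same formula must hold.
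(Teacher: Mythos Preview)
Your overall strategy matches the paper's: mollified Cole--Hopf transformation \`a la Funaki--Quastel, It\^o's formula for the exponential, and control of the remainder via the It\^o trick/Kipnis--Varadhan machinery of Section~\ref{sec:additive}. Two points in your sketch, however, are either incorrect or gloss over the genuine difficulty.

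First, the ``$\sigma$-generated drifts'' are \emph{not} absorbed into a deterministic integrating factor. After expanding $\|\partial_z(\Theta^{(\sigma)}_x\ast\rho^\varepsilon)\|_{L^2}^2 = \|\rho^\varepsilon_x - \rho^\varepsilon\ast\sigma\|_{L^2}^2$ one generates, besides finite drifts, the stochastic integral $-\sqrt{2}\lambda\,\phi^\varepsilon_t(x)\,\dd W_t(\rho^\varepsilon\ast\sigma)$ and a drift against the \emph{random} zero-quadratic-variation process $Q^\varepsilon_t = \int_0^t\{-\langle (u^\varepsilon_s)^2-\|\rho^\varepsilon\|_{L^2}^2,\sigma\rangle + \ldots\}\,\dd s$. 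The paper shows $Q^\varepsilon\to Q$ in $C^{3/4-}$ so that $\int \phi^\varepsilon_s(\varphi)\,\dd Q^\varepsilon_s$ converges as a Young integral, and the correct gauge is $Z_t = e^{X_t}\phi_t$ for a random, $x$-independent process $X_t$ built from $W_t(\sigma)$ and $Q_t$. It is only because $X$ is constant in $x$ (not because it is deterministic) that $\partial_x\log$ kills it. Note also that $h^\varepsilon_t(x)$ is a Dirichlet process, not a semimartingale: $\mathcal{A}_t(\Theta^L_x)$ has zero quadratic variation but is not of finite variation, so one needs the Russo--Vallois It\^o formula.

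Second, the sentence ``a Gaussian calculation shows these bounds vanish'' hides the technical heart of the argument. The remainder carries the exponential weight $Z^\varepsilon_s(x)=\phi^\varepsilon_s(x)$, which has components in every Wiener chaos, so the Poisson equation $L_0 G = r^\varepsilon$ cannot be solved and the It\^o trick in its direct form is unavailable. The paper instead bounds the variational norm $\|r^\varepsilon(\cdot,\varphi)\|_{-1}$ by testing against a generic cylinder function $F$ and performing a Malliavin integration by parts on the second-chaos kernel (Lemma~\ref{lem:IBP}); this forces the subtraction of a specific constant $K^\varepsilon_x$ to make $\E[r^\varepsilon]=0$ (otherwise $\|r^\varepsilon\|_{-1}=\infty$), and that constant converges to $\lambda^2/12$, not to $0$. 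Your sketch does not account for this term. The observation that the two regularizations of the Wick square converge to the same limit is correct but not sufficient by itself: one still has to control the product with the exponential $\phi^\varepsilon_s(x)$ uniformly in $\varepsilon$, which is exactly what the duality argument with the $\dot H^{-1}$ kernel estimates (Lemmas~\ref{lem:Gxx}--\ref{lem:C1}) accomplishes.
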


The proof will be given in Section~\ref{sec:map-SHE} below.

\begin{remark}
  As far as we are aware, this is the first time that existence and
  uniqueness for an intrinsic notion of solution to the Burgers equation on the
  real line is obtained. The results of~\cite{bib:hairerKPZ, bib:kpzReloaded, bib:kupiainenMarcozzi} are
  restricted to the circle $\mathbb{T}$. In~\cite{bib:hairerLabbe}
  the linear multiplicative heat equation is solved on the real line using
  regularity structures, and by the strong maximum principle of~\cite{bib:cannizzaroGassiatFriz} the solution is strictly positive so in particular
  its logarithm is well-defined. Then it should be possible to show that the derivative of the
  logarithm is a modelled distribution and solves the stochastic Burgers equation in the
  sense of regularity structures. However, it is not at all obvious if and in which sense
  this solution is unique.
\end{remark}

\begin{remark}[Reduction to standard parameters]
   To simplify notation we will assume from now on that $\nu = 1$ and $D = 2$ which can be achieved by a simple transformation. Indeed, it is easy to see that $(u, \mathcal{A}) \in \mathcal{Q}_{\nu, D}(W,\eta)$ if and only if $(u^{\nu, D}, \mathcal{A}^{\nu, D}) \in \mathcal{Q}(W^{\nu,D},\eta)$, where
  \[ u^{\nu, D}_t (\varphi) \assign \sqrt{\frac{2 \nu}{D}}  u_{t / \nu} (\varphi), \qquad \mathcal{A}^{\nu, D}_t (\varphi) \assign
     \sqrt{\frac{2 \nu}{D}} \mathcal{A}_{t / \nu} (\varphi), \qquad W^{\nu,D}_t(\varphi) \assign \sqrt{\nu} W_{t/\nu}(\varphi)
  \]
  and $W^{\nu, D}$ is a space-time white noise. Moreover, $u$ is a strong stationary solution to~\eqref{eq:thm-SBE} if and only if $u^{\nu,D}$ is a strong stationary solution to
  \[
     \dd u^{\nu, D}_t = \Delta u^{\nu, D}_t \dd t + \lambda \sqrt{\frac{D}{2 \nu^3} } \partial_x (u^{\nu, D}_t)^2 \dd t +\sqrt{2} \partial_x \dd W^{\nu,D}_t, \qquad u^{\nu,D}_0 = \eta.
  \]
\end{remark}

\begin{remark}[Martingale problem]\label{rmk:martingale problem}Energy solutions can be understood as
  solutions to a martingale problem. Indeed, given a pair of stochastic
  processes $(u, \mathcal{A})$ with trajectories in $C \left( \mathbb{R}_+, \CS'(\R) \right)$ we
  need to check the following criteria to verify that $u$ is the unique-in-law
  energy solution to~(\ref{eq:thm-SBE}):
  \begin{enumerateroman}
    \item the law of $u_t$ is that of the white noise with variance $D/(2\nu)$ for all $t \geqslant 0$;
    
    \item for any test function $\varphi \in \CS (\mathbb{R})$ the process $t
    \mapsto \mathcal{A}_t (\varphi)$ is almost surely of zero quadratic
    variation, satisfies $\mathcal{A}_0 (\varphi) = 0$, and the pair $(u
    (\varphi), \mathcal{A} (\varphi)) $ solves the equation
    \[ u_t (\varphi) = u_0 (\varphi) + \nu \int_0^t u_s (\Delta \varphi) \dd s
       +\mathcal{A}_t (\varphi) + M_t (\varphi), \qquad t \geqslant 0, \]
    where $M (\varphi)$ is a continuous martingale in the filtration generated
    by $(u, \mathcal{A})$, such that $M_0 (\varphi) = 0$ and $M (\varphi)$
    has quadratic variation $[M (\varphi)]_t = D \| \partial_x \varphi
    \|_{L^2}^2 t$.
    
    \item for any $T > 0$ the time-reversed processes $\hat{u}_t = u_{T - t}$,
    $\hat{\mathcal{A}}_t = - (\mathcal{A}_T -\mathcal{A}_{T - t})$
    satisfy
    \[ \hat{u}_t (\varphi) = \hat{u}_0 (\varphi) + \nu  \int_0^t \hat{u}_s (\Delta
       \varphi) \dd s + \hat{\mathcal{A}}_t (\varphi) + \hat{M}_t
       (\varphi), \qquad t \in [0, T], \]
    where $\hat{M} (\varphi)$ is a continuous martingale in the filtration
    generated by $(\hat{u}, \hat{\mathcal{A}})$, such that $\hat{M}_0
    (\varphi) = 0$ and $\hat{M} (\varphi)$ has quadratic variation $[\hat{M}
    (\varphi)]_t = D \| \partial_x \varphi \|_{L^2}^2 t$.
    \item there exists $\rho \in L^1(\R) \cap L^2(\R)$ such that $\int_\R \rho(x) \dd x = 1$ and with $\rho^N \assign N \rho(N \cdot)$ we have
       \[
          \mathcal{A}_t(\varphi) = \lambda \lim_{N\to \infty} \int_0^t (u_s \ast \rho^N)^2 (- \partial_x \varphi) \dd s
       \]
       for all $t \ge 0$ and $\varphi \in \CS(\R)$.
  \end{enumerateroman}
\end{remark}

\begin{remark}[Different notions of energy solutions]
  In \cite{bib:goncalvesJara} a slightly weaker notion of energy solution was
  introduced. Roughly speaking Gon\c calves and Jara only made assumptions i., ii. and iv. of
  Remark~\ref{rmk:martingale problem} but did not consider the time reversal
  condition~iii. We do not know whether this weaker notion still gives rise to
  unique solutions. However, the convergence proof of~\cite{bib:goncalvesJara}
  easily gives all four assumptions i., ii., iii., iv. even if in that paper
  it is not explicitly mentioned. Indeed, the time-reversal condition iii. is
  satisfied on the level of the particle systems they consider, and
  it trivially carries over to the limit. Therefore, the combination of~\cite{bib:goncalvesJara} with our
  uniqueness result proves the weak KPZ universality conjecture for a wide class of particle systems.
\end{remark}

\begin{remark}[Relaxations]
  One can show that it suffices to assume
  $(u, \mathcal{A}) \in C \left( \mathbb{R}_+, \CD'(\R) \right)$ and to verify
  the conditions of Remark~\ref{rmk:martingale problem} for $\varphi \in
  C^{\infty}_c (\mathbb{R})$ instead of $\varphi \in \CS (\mathbb{R})$. For our proof of uniqueness we do need to handle test functions in $\CS(\R)$, but since all terms in the decomposition
  \[
     u_t(\varphi) = u_0 (\varphi) + \nu \int_0^t u_s(\Delta \varphi) \dd s + \lambda \int_0^t \partial_x u_s^{\diamond 2} \dd s (\varphi) + \sqrt{D} W_t(\varphi)
  \]
  come with good continuity estimates it follows from the above conditions that $u$ and $\mathcal{A}$ have trajectories in $C(\R_+, \CS'(\R))$ and satisfy the same equation also for test functions in $\CS(\R)$. It is even possible to only assume  that $(u(\varphi), \mathcal{A}(\varphi))$ is a family of continuous adapted stochastic process indexed by $\varphi \in C^\infty_c(\R)$ such that the conditions i.-iv. in Remark~\ref{rmk:martingale problem} hold up to a null set that may depend on $\varphi$. In that case we can find a version $(\tilde{u},\tilde{\mathcal A})$ with values in $C(\R_+, \CS'(\R))$ such that $\P(u(\varphi) = \tilde u(\varphi), \mathcal{A}(\varphi)=\tilde{\mathcal A}(\varphi)) = 1$ for all $\varphi \in C^\infty_c(\R)$. These relaxations may come in handy when proving
  the convergence of fluctuations of microscopic systems to the Burgers equation. 
\end{remark}

\subsection{KPZ equation}

Once we understand how to deal with the stochastic Burgers equation, it is not difficult to handle also the KPZ equation. Let $W$ be a space-time white noise on the filtered probability space $(\Omega, \mathcal{F}, (\mathcal{F}_t)_{t \geqslant 0}, \mathbb{P})$ and let $\chi$ be a $\mathcal{F}_0$-measurable random variable with values in $\CS'(\R)$ such that $\eta \assign \partial_x \chi$ is a space white noise. Then we denote with $\mathcal{Q}^{\mathrm{KPZ}} (W, \chi)$ the space of pairs $(h, \mathcal{B})$ of adapted
  stochastic processes with trajectories in $C ( \mathbb{R}_+, \CS' (\mathbb{R}) )$ which solve for all $\varphi \in \CS(\R)$ the equation
\begin{equation}\label{eq:KPZ-controlled}
   h_t(\varphi) = \chi(\varphi) + \int_0^t h_s (\Delta \varphi) \dd s + \mathcal B_t(\varphi) + \sqrt{2} W_t(\varphi),
\end{equation}  
 and are such that $\mathcal{B}_0(\varphi) = 0$, and for which $u \assign \partial_x h$, $\mathcal{A} \assign \partial_x \mathcal{B}$ satisfy $(u, \mathcal{A}) \in \CQ(W,\eta)$. Similarly we write $(h, \mathcal{B}) \in \CQ^{\mathrm{KPZ}}$ if~\eqref{eq:KPZ-controlled} holds and $(u, \mathcal A) \in \CQ$. In Proposition~\ref{prop:kpz drift} we show in fact the following generalization of Proposition~\ref{prop:burgers drift}: 
  If $(u, \mathcal{A}) \in \CQ$ and $(\rho^N) \subset L^1(\R) \cap L^2(\R)$ is an approximate identity (i.e. $\sup_N \| \rho^N
  \|_{L^1(\R)} < \infty$, $\widehat{\rho^N} (0) = 1$ for all $N$, $\lim_{N \rightarrow
  \infty} \widehat{\rho^N} (x) = 1$ for all $x$), then there exists a process
  $\int_0^{\cdummy} u_s^{\diamond 2} \dd s \in C \left( \R_+, \CS'(\R)
  \right)$, defined by
  \[ \left( \int_0^t u_s^{\diamond 2} \dd s \right) (\varphi) \assign
     \lim_{n \rightarrow \infty} \int_0^t [(u_s \ast \rho^N)^2 - \| \rho^N
     \|_{L^2(\R)}^2] (\varphi) \dd s, \]
  where the convergence takes place in $L^p(\P)$ uniformly on compacts and the limit does not depend on the approximate
  identity $(\rho^N)$. 

So we call $(h, \mathcal{B}) \in \CQ^{\mathrm{KPZ}}(W,\chi)$ a \emph{strong almost-stationary solution} to the KPZ equation
\begin{equation}\label{eq:KPZ}
   \dd h_t = \Delta h_t \dd t + \lambda ((\partial_x h_t)^2 - \infty) \dd t + \sqrt{2} \dd W_t,\qquad h_0 = \chi,
\end{equation}
if $\mathcal{B} = \lambda \int_0^\cdot u_s^{\diamond 2} \dd s$. Similarly we call $(h, \mathcal{B}) \in \CQ^{\mathrm{KPZ}}$ an \emph{energy solution} to~\eqref{eq:KPZ} if $\mathrm{law}(h_0) = \mathrm{law}(\chi)$ and $ \mathcal{B} = \lambda \int_0^\cdot u_s^{\diamond 2} \dd s$. The terminology ``almost-stationary'' comes from~\cite{bib:goncalvesJara} and it indicates that for fixed $t \geqslant 0$ the process $x \mapsto h_t(x)$ is always a two-sided Brownian motion, however the distribution of $h_t(0)$ may depend on time. The analogous result to Theorem~\ref{thm:burgers-uniqueness} is then the following.

\begin{theorem}\label{thm:kpz-uniqueness}
  Let $W$ be a space-time white noise on $(\Omega, \mathcal{F}, (\mathcal{F}_t)_{t \geqslant 0}, \mathbb{P})$ and
  let $\chi$ be a $\mathcal{F}_0$-measurable random variable with values in $\CS'(\R)$ such that $\partial_x \chi$ is a space white noise. Then the
  strong almost-stationary solution $h$ to
  \begin{equation}\label{eq:thm-KPZ}
     \dd h_t = \Delta h_t \dd t + \lambda ((\partial_x h_t)^2 - \infty) \dd t + \sqrt{2} \dd W_t, \qquad h_0 = \chi,
  \end{equation}
  is unique up to indistinguishability. Moreover, for $\lambda \neq 0$ we have
  \begin{equation}\label{eq:thm-KPZ-CH}
     h_t = \lambda^{- 1} \log Z_t + \frac{\lambda^3}{12} t, \qquad t \ge 0,
  \end{equation}
  where $Z$ is the unique solution to the linear multiplicative stochastic heat equation
  \begin{equation}\label{eq:thm-KPZ-SHE} \dd Z_t = \Delta Z_t \dd t + \sqrt{2} \lambda Z_t
    \dd W_t, \qquad Z_0 = e^{\lambda \chi}.
  \end{equation}
  Consequently, any two energy solutions of~(\ref{eq:thm-KPZ}) have the
  same law.
\end{theorem}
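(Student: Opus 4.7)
The plan is to deduce Theorem~\ref{thm:kpz-uniqueness} from Theorem~\ref{thm:burgers-uniqueness} by spatial differentiation to obtain uniqueness of $\partial_x h$, and then identify the zero (constant-in-$x$) mode of $h$ by a direct It\^o/renormalization argument on a mollified exponential, following the strategy of Funaki and Quastel.

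For the uniqueness statement, I would take two strong almost-stationary solutions $(h^1, \mathcal{B}^1), (h^2, \mathcal{B}^2)$ driven by the same $W$ with common initial datum $\chi$ and set $u^i \assign \partial_x h^i$. By the definition of $\CQ^{\mathrm{KPZ}}(W, \chi)$ the pair $(u^i, \partial_x \mathcal{B}^i)$ lies in $\CQ(W, \eta)$ with $\eta = \partial_x \chi$; moreover $\partial_x u_s^{\diamond 2} = \partial_x u_s^2$ distributionally (the Wick correction $-\|\rho^N\|_{L^2}^2$ is constant in $x$), so each $u^i$ is a strong stationary SBE solution in the sense of Theorem~\ref{thm:burgers-uniqueness}. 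That theorem yields $u^1 = u^2$ and hence $\mathcal{B}^1 = \mathcal{B}^2$, so $v \assign h^1 - h^2$ is a continuous $\CS'$-valued process satisfying $v_t(\varphi) = \int_0^t v_s(\Delta \varphi) \dd s$ with $v_0 = 0$ for every $\varphi \in \CS(\R)$. Testing against the backward heat flow $\psi_s = e^{(t-s)\Delta} \varphi$ shows $v_t(\varphi) = 0$, so $v \equiv 0$.

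For the Cole--Hopf identification, Theorem~\ref{thm:burgers-uniqueness} gives $u = \lambda^{-1} \partial_x \log Z^{(\sigma)}$, where $Z^{(\sigma)}$ solves the SHE with initial datum $\exp(\lambda \eta(\Theta^{(\sigma)}_x))$. Since $\partial_x \Theta^{(\sigma)}_x = \delta_x$ and $\partial_x \chi = \eta$ one has $\eta(\Theta^{(\sigma)}_x) - \chi(x) = c(\sigma)$, a random constant independent of $x$, so by linearity of the SHE $Z^{(\sigma)} = e^{\lambda c(\sigma)} Z$ with $Z$ the SHE solution started from $e^{\lambda \chi}$. In particular $\partial_x \log Z^{(\sigma)} = \partial_x \log Z$, hence $h_t - \lambda^{-1} \log Z_t$ is independent of $x$; call it $g(t, \omega)$, with $g(0, \omega) = 0$. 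To pin down $g$, I would mollify to $Y^\varepsilon_t(x) \assign \exp(\lambda h^\varepsilon_t(x))$ with $h^\varepsilon_t = h_t \ast \rho^\varepsilon$, apply It\^o's formula, and use the identity $\lambda Y^\varepsilon \Delta h^\varepsilon = \Delta Y^\varepsilon - \lambda^2 Y^\varepsilon (u^\varepsilon)^2$ to rewrite the drift, obtaining
\[
   \dd Y^\varepsilon_t(x) = \Delta Y^\varepsilon_t(x) \dd t + \lambda^2 Y^\varepsilon_t(x) \mathcal{E}^\varepsilon_t(x) \dd t + \sqrt{2}\lambda Y^\varepsilon_t(x) \dd W_t(\rho^\varepsilon(x - \cdummy)),
\]
with error $\mathcal{E}^\varepsilon_t(x) = (u^{\diamond 2}_t \ast \rho^\varepsilon)(x) - (u^\varepsilon_t(x))^2 + \|\rho^\varepsilon\|_{L^2}^2$. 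The additive-functional estimates of Section~\ref{sec:additive} should let me show that, when integrated in time against smooth functionals of $Y^\varepsilon$, the term $\lambda^2 \mathcal{E}^\varepsilon$ is well approximated by the deterministic constant $\lambda^4 / 12$. Passing to the limit then gives that $e^{-\lambda^4 t / 12} Y^\varepsilon_t$ converges to the unique SHE solution $Z$ with $Z_0 = e^{\lambda \chi}$, so $g(t) = \lambda^3 t / 12$ as required.

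The main obstacle will be the renormalization in the last step. The error $\mathcal{E}^\varepsilon$ is a genuinely second-chaos object arising as the difference between two natural approximations of the Wick square $u^{\diamond 2}$, and showing that its time-integrated contribution against $Y^\varepsilon$ converges to exactly $\lambda^2 t / 12$ is the arithmetic source of the anomalous drift. This constant is invisible at the SBE level because $\partial_x$ annihilates it, which is why it does not appear in Theorem~\ref{thm:burgers-uniqueness} and must be recovered here by a KPZ-level argument relying on the Boltzmann--Gibbs-type controls from Section~\ref{sec:additive}.
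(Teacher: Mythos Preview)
Your overall plan (differentiate, invoke Theorem~\ref{thm:burgers-uniqueness} for $u=\partial_x h$, then recover the $x$-constant mode) matches the paper's, and your uniqueness paragraph is fine. The paper, however, identifies the constant mode very differently and much more cheaply. Rather than launching a \emph{fresh} mollified Cole--Hopf computation with $Y^\varepsilon=e^{\lambda h^\varepsilon}$, the paper reuses the auxiliary process $X_t$ already built inside the proof of Theorem~\ref{thm:burgers-uniqueness} (recall $Z^{(\sigma)}_t=e^{X_t}\phi_t$ with $\phi_t(x)=e^{\lambda u_t(\Theta_x)}$). One line of algebra on the definition of $X$ together with the KPZ equation tested against $\sigma$ gives $\dd X_t=\lambda\,\dd h_t(\sigma)-\tfrac{\lambda^4}{12}\,\dd t$; combined with the purely spatial identity $h_t(x)=u_t(\Theta_x)+h_t(\sigma)$ this yields~\eqref{eq:thm-KPZ-CH} immediately. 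The point you describe as ``invisible at the SBE level'' is therefore not missing: the constant $\lambda^2/12$ is exactly $\lim_L K^L$ from Lemma~\ref{lem:KL}, already computed, and it sits inside $X_t$. Your second mollification would recompute that lemma.

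There is also a technical snag in your proposed route that you should flag. The Kipnis--Varadhan bound (Corollary~\ref{cor:KV}) controls additive functionals $\int_0^t F(s,u_s)\,\dd s$ with $F(s,\cdot)$ a function on the \emph{state space} of $u$. But $Y^\varepsilon_s(x)=e^{\lambda h^\varepsilon_s(x)}$ is \emph{not} a function of $u_s$ alone: writing $h_t(x)=u_t(\Theta_x)+h_t(\sigma)$ one sees $Y^\varepsilon_s(x)=e^{\lambda h_s(\sigma)}\cdot(\text{function of }u_s)$, and the adapted prefactor $e^{\lambda h_s(\sigma)}$ depends on the whole past. So Section~\ref{sec:additive} does not apply directly to $\int_0^t Y^\varepsilon_s(\cdot)(\mathcal E^\varepsilon_s-\lambda^2/12)\,\dd s$. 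This can be repaired --- e.g.\ factor out $e^{\lambda h_s(\sigma)}$ and treat the resulting integral by an integration-by-parts against the semimartingale $e^{\lambda h(\sigma)}$, using the $p$-variation control from Corollary~\ref{cor:KV} on the remaining $u$-functional --- but it is an extra step your sketch omits. The paper avoids this entirely because $\phi^L_s(x)=e^{\lambda u_s(\Theta^L_x)}$ is by construction a function of $u_s$, and the non-stationary piece is isolated in the scalar process $X$.
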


The proof will be given in Section~\ref{sec:map-SHE} below.

\begin{remark}
   It is maybe somewhat surprising that the energy solution $h$ to the KPZ equation is not equal to the Cole-Hopf solution $\tilde{h} \assign \lambda^{-1} \log Z$ but instead we have to add the drift $t \lambda^3/12$ to $\tilde {h}$ to obtain $h$. Remarkably, this drift often appears in results about the Cole-Hopf solution of the KPZ equation, for example in~\cite[Theorem~2.3]{bib:bertiniGiacomin} it has to be added to obtain the Cole-Hopf solution as scaling limit for the fluctuations of the height profile of the weakly asymmetric exclusion process (there the drift is $- t/24$ because Bertini and Giacomin consider different parameters for the equation). The same drift also appears in~\cite[Theorem~1.1]{bib:amirCorwinQuastel}, in the key formula~(4.17) of~\cite{bib:sasamotoSpohnExact}, and in~\cite[Theorem~1.1]{bib:funakiQuastel}.
   
   In~\cite[Theorem~3]{bib:goncalvesJara} it is claimed that the Cole-Hopf solution is an energy solution to the KPZ equation, and as we have seen this is not quite correct. The reason is that the proof in~\cite{bib:goncalvesJara} is based on the convergence result of~\cite{bib:bertiniGiacomin}, but they did not take the drift $-t/24$ into account which Bertini and Giacomin had to add to obtain the Cole-Hopf solution in the limit.
\end{remark}

\begin{remark}[Martingale problem]\label{rmk:martingale problem kpz}
  Given a pair of stochastic processes $(h, \mathcal{B})$ with trajectories in $C \left( \mathbb{R}_+, \CS'(\R) \right)$ we
  need to check the following criteria to verify that $h$ is the unique-in-law
  energy solution to~(\ref{eq:thm-KPZ}):
  \begin{enumerateroman}
    \item for all $\varphi \in \CS(\R)$ we have $\mathcal{B}_0(\varphi) = 0$ and
       \begin{equation*}
          h_t(\varphi) = \chi(\varphi) + \int_0^t h_s (\Delta \varphi) \dd s + \mathcal B_t(\varphi) + W_t(\varphi)
       \end{equation*}  
       for a continuous martingale $W(\varphi)$ starting in $0$ and with quadratic variation $[W(\varphi)]_t = 2 t \| \varphi\|_{L^2(\R)}^2$;
    \item the pair $u \assign \partial_x h$, $\mathcal{A} \assign \partial_x \mathcal{B}$ satisfies conditions i., ii., iii. in Remark~\ref{rmk:martingale problem};
    
    \item there exists $\rho \in L^1(\R) \cap L^2(\R)$ such that $\int_\R \rho(x) \dd x = 1$ and with $\rho^N \assign N \rho(N \cdot)$ we have
       \[
          \mathcal{B}_t(\varphi) = \lambda \lim_{N\to \infty} \int_0^t [(u_s \ast \rho^N)^2 - \| \rho^N \|_{L^2(\R)}^2 ](\varphi) \dd s
       \]
       for all $t \ge 0$ and $\varphi \in \CS(\R)$.
  \end{enumerateroman}
\end{remark}

\subsection{The periodic case}\label{sec:periodic}

It is also useful to have a theory for the periodic model $u \colon \R_+ \times \T \to \R$, where $\T = \R/\Z$ and
\begin{equation}\label{eq:SBE-periodic}
   \dd u_t =  \Delta u_t \dd t + \lambda \partial_x u^2_t \dd t + \sqrt{2} \partial_x \dd W_t ,\qquad u_0 = \eta,
\end{equation}
for a periodic space-time white noise $W$ and a periodic space white noise $\eta$. A periodic space-time white noise is a process $W$ with trajectories in $C(\R_+, \CS'(\T))$, where $\CS'(\T)$ are the (Schwartz) distributions on the circle, such that for all $\varphi \in C^\infty(\T)$ the process $(W_t(\varphi))_{t \geqslant 0}$ is a Brownian motion with variance $\E[|W_t(\varphi)|^2] = t \|\varphi\|_{L^2(\T)}^2$. A periodic space white noise is a centered Gaussian process $(\eta(\varphi))_{\varphi \in C^\infty(\T)}$ with trajectories in $\CS'(\T)$, such that for all $\varphi, \psi \in C^\infty(\T)$ we have $\E[\eta(\varphi) \eta(\psi)] = \langle \Pi_0 \varphi, \Pi_0 \psi \rangle_{L^2(\T)}$, where $\Pi_0 \varphi = \varphi - \int_{\T} \varphi(x) \dd x$ is the projection of $\varphi$ onto the mean-zero functions. The reason for setting the zero Fourier mode of $\eta$ equal to zero is that the stochastic Burgers equation is a conservation law and any solution $u$ to~\eqref{eq:SBE-periodic} satisfies $\widehat{u_t}(0) = \widehat{u_0}(0)$ for all $t \geqslant 0$, and therefore shifting $\hat{\eta}(0)$ simply results in a shift of $\widehat{u_t}(0)$ by the same value, for all $t \geqslant 0$. So for simplicity we assume $\hat{\eta}(0) = 0$. Controlled processes are defined as before, except that now we test against $\varphi \in C^\infty(\T)$ and all noises are replaced by their periodic counterparts. Then it is easy to adapt the proof of Proposition~\ref{prop:kpz drift} to show that also in the periodic setting the Burgers drift $\int_0^\cdot \partial_x u_s^2 \dd s$ is well-defined; alternatively, see~\cite[Lemma~1]{bib:gubinelliJara}. Thus, we define strong stationary solutions respectively energy solutions to the periodic Burgers equation exactly as in the non-periodic setting. We then have the analogous uniqueness result to Theorem~\ref{thm:burgers-uniqueness}:

\begin{theorem}\label{thm:burgers-uniqueness-periodic}
  Let $W$ be a periodic space-time white noise on $(\Omega, \mathcal{F}, (\mathcal{F}_t)_{t \geqslant 0}, \mathbb{P})$ and
  let $\eta$ be a $\mathcal{F}_0$-measurable periodic space white noise. Then the
  strong stationary $u$ solution to
  \begin{equation}\label{eq:thm-SBE-periodic}
     \dd u_t = \Delta u_t \dd t + \lambda \partial_x  u_t^2 \dd t + \sqrt{2} \partial_x \dd W_t, \qquad u_0 = \eta,
  \end{equation}
  is unique up to indistinguishability. Moreover, for $\lambda \neq 0$ we have
  $u = \lambda^{-1} \partial_x \log Z$, where the derivative is taken in the
  distributional sense and $Z$ is the unique solution to the linear multiplicative stochastic
  heat equation
  \begin{equation}\label{eq:thm-SHE-periodic}
    \dd Z_t = \Delta Z_t \dd t + \sqrt{2} \lambda Z_t \dd W_t, \qquad Z_0 = e^{\lambda I \eta},
  \end{equation}
  for $\CF_\T(I \eta)(k) \assign (2\pi i k)^{-1} \CF_\T \eta(k)$, $k \in \Z \setminus\{0\}$, $\CF_\T(I \eta)(0) = 0$, and where $\CF_\T$ denotes the Fourier transform on $\T$. Consequently, any two energy solutions of~(\ref{eq:thm-SBE-periodic}) have the
  same law.
\end{theorem}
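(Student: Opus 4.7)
The strategy is to mimic the proof of Theorem~\ref{thm:burgers-uniqueness} line by line, exploiting that on $\T$ everything is technically easier (no weights, compact domain, Fourier series instead of Fourier transform) while carefully tracking the role of the zero Fourier mode, which is the one genuinely new feature. First I would fix a strong stationary solution $(u,\mathcal{A})\in\CQ^{\mathrm{per}}(W,\eta)$ with $\mathcal{A}=\lambda\int_0^\cdot\partial_x u_s^2\,\dd s$, and introduce a family of mollifiers $\rho^\varepsilon$ on $\T$ (e.g. truncated Fourier kernels so that $\widehat{\rho^\varepsilon}$ is compactly supported in frequency), setting $u^\varepsilon\assign u\ast\rho^\varepsilon$. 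Since $\hat{\eta}(0)=0$ by assumption, $\widehat{u_t}(0)=0$ for all $t$, and so $I u_t\in C^\infty(\T)$ is a well-defined periodic primitive in the sense that $\partial_x(I u_t)=u_t$ and $\int_\T I u_t\,\dd x = 0$, and the same holds for $u^\varepsilon$. Set $Z^\varepsilon_t\assign\exp(\lambda I u^\varepsilon_t)$.

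Next I would apply the standard It\^o formula coordinate-wise to $Z^\varepsilon_t(x)$. Since $u^\varepsilon$ is smooth in $x$ and a semimartingale in $t$ (its martingale part has quadratic variation coming from $\sqrt{2}\partial_x W\ast\rho^\varepsilon$ and its drift contains $\Delta u^\varepsilon\,\dd t$ and the mollified Burgers drift), the It\^o expansion yields a linear multiplicative SHE for $Z^\varepsilon$ plus error terms: (a) a commutator between $\Delta$ and the mollification applied to $Iu$; (b) a nonlinearity-mollification mismatch of the form $\lambda(u^\varepsilon)^2-\lambda(u^2)^\varepsilon$ integrated against $\partial_x^{-1}$; and (c) an It\^o correction from the martingale part, which after regularization equals $\lambda^2\|\partial_x I\rho^\varepsilon\|_{L^2(\T)}^2$ at each point. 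The key point, exactly as on the line, is that the sum of the Burgers drift and the It\^o-type quadratic correction combine into a quantity to which the Boltzmann--Gibbs / second order energy estimates of Section~\ref{sec:additive} apply; on the torus these estimates are in fact simpler, uniformly bounded and used verbatim as in~\cite{bib:gubinelliJara}, and they force the error terms (a)--(c) to vanish in probability as $\varepsilon\to 0$. Passing to the limit and using continuity of $Z^\varepsilon_t$ in $\varepsilon$ (since $Iu_t\in C(\T)$ a.s.\ thanks to $\hat{\eta}(0)=0$), one obtains that $Z\assign\exp(\lambda Iu)$ is a weak/mild solution of~\eqref{eq:thm-SHE-periodic} with initial datum $Z_0=e^{\lambda I\eta}$.

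Uniqueness for the periodic multiplicative SHE in $C(\R_+,C(\T))$ is classical via Picard iteration in mild form, using the bounded heat kernel on $\T$ and the almost-sure continuity of $I\eta$, so $Z$ is uniquely determined by $\eta$ and $W$; then $u=\lambda^{-1}\partial_x\log Z$ is pathwise determined, giving strong uniqueness. Weak (in law) uniqueness for energy solutions then follows by the usual argument: for any energy solution one can enlarge the probability space to realize a driving white noise $W$ such that $(u,W)$ satisfies the conditions of a strong stationary solution, apply the identity $u=\lambda^{-1}\partial_x\log Z(W,\eta)$, and read off that $\mathrm{law}(u)$ is a measurable functional of $\mathrm{law}(\eta)$ only. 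The main obstacle, as in the real-line case, is the rigorous justification of steps (a)--(c): one must show the commutator and renormalization errors vanish uniformly on compact time intervals, and this is where the controlled-process machinery (the forward/backward decomposition and the resulting It\^o trick estimates on additive functionals) is essential; on $\T$ however this becomes noticeably cleaner because one works with $L^2$-based norms without weights and the spectral gap of $\Delta$ on mean-zero functions gives easy uniform bounds. The necessary modifications to the real-line argument are collected in Appendix~\ref{app:periodic}.
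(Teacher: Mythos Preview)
Your overall strategy---mollify, exponentiate the primitive, apply It\^o, and pass to the limit using the energy estimates---is exactly the paper's strategy, but there is a genuine gap in your claim that the error terms (a)--(c) ``vanish in probability as $\varepsilon\to 0$'' and that $Z\assign \exp(\lambda I u)$ solves~\eqref{eq:thm-SHE-periodic}. They do not vanish. After the correct decomposition one finds, in addition to a remainder $R^L$ that does go to zero via the Kipnis--Varadhan inequality, three non-trivial leftover terms: a deterministic drift $\lambda^2 K^L\,\dd t$ with $K^L\to \lambda^2/12$ (this is the whole content of Lemma~\ref{lem:KL-periodic}, and it is the periodic analogue of the $1/12$ appearing on the line); a zero-quadratic-variation process $Q^L\to Q$ involving $-\int_0^\cdot u_s^{\diamond 2}\,\dd s(1)$; and the zero-mode martingale $-\sqrt{2}\lambda\,\phi^L_t\,\dd W_t(1)$ together with an accompanying $-2\lambda^2\phi^L_t\,\dd t$. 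None of these disappears.

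The reason the argument still closes is that all of these leftovers are spatially constant. One therefore sets $Z_t\assign e^{X_t}\phi_t$ for a suitable $x$-independent semimartingale $X$ chosen so that the extra terms cancel; then $Z$ solves the clean equation~\eqref{eq:thm-SHE-periodic}, and since $\partial_x\log Z_t=\partial_x\log\phi_t=\lambda u_t$ the conclusion follows. In particular $\exp(\lambda I u)$ itself is \emph{not} a solution of~\eqref{eq:thm-SHE-periodic}. Your item~(c) already signals the problem: $\|\partial_x I\rho^\varepsilon\|_{L^2(\T)}^2=\|\rho^\varepsilon-1\|_{L^2(\T)}^2$ diverges, so the cancellation with the Burgers drift cannot be exact---the energy estimate controls the difference only after subtracting its spatial mean, and the finite residue is precisely $K^L$.
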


We explain in Appendix~\ref{app:periodic} how to modify the arguments for the non-periodic case in order to prove Theorem~\ref{thm:burgers-uniqueness-periodic}.

\section{Additive functionals of controlled processes}\label{sec:additive}

\subsection{It\^o trick and Kipnis-Varadhan inequality}\label{sec:ito}

Our main method for controlling additive functionals of controlled processes is to write them as a sum of a forward- and a backward-martingale which enables us to apply martingale inequalities. For that purpose we first introduce some notation. Throughout this section we fix $(u, \mathcal{A}) \in \CQ$.
\begin{definition}
   The space of \emph{cylinder functions} $\CC$ consists of all $F\colon \CS'(\R) \to \R$ of the form $F(u) = f(u(\varphi_1), \ldots, u (\varphi_n))$ for $\varphi_1, \ldots, \varphi_n \in \CS(\R)$ and $f \in C^2 (\mathbb{R}^n)$ with polynomial growth of its partial derivatives up to order $2$. 
\end{definition}
For $F \in \CC$ we define the action of the \emph{Ornstein--Uhlenbeck generator} $L_0$ as
\[
   L_0 F(u) \assign \sum_{i=1}^n \partial_i f(u(\varphi_1), \ldots, u (\varphi_n)) u( \Delta \varphi_i) + \sum_{i,j=1}^n \partial_{ij}^2 f(u(\varphi_1), \ldots, u (\varphi_n)) \langle \partial_x \varphi_i, \partial_x \varphi_j\rangle_{L^2(\R)}.
\]
With the help of It\^o's formula it is easy to verify that if $(X,0) \in \CQ$ is the Ornstein--Uhlenbeck process and $F \in \CC$, then $F(X_t) - F(X_0) - \int_0^t L_0 F(X_s) \dd s$, $t \geqslant 0$, is a martingale and in particular $L_0 F$ is indeed the action of the generator of $X$ on $F$. We will see in Corollary~\ref{cor:OU-generator} below that $L_0$ can be uniquely extended from $\CC$ to a closed unbounded operator on $L^2(\mu)$, also denoted by $L_0$, so $\CC$ is a core for $L_0$. We also define the Malliavin derivative
\[
   \dD_x F(u) \assign  \sum_{i=1}^n \partial_i f(u(\varphi_1), \ldots, u (\varphi_n)) \varphi_i(x),\qquad x \in \R,
\]
for all $F \in \CC$, and since $\mu$ is the law of the white noise we are in a standard Gaussian setting and $\dD$ is closable as an unbounded operator from $L^p(\mu)$ to $L^p(\mu; L^2(\R))$ for all $p \in [1, \infty)$, see for example~\cite{bib:nualart}. Similarly also $F \mapsto \partial_x \dD_x F$ is closable from $L^p(\mu)$ to $L^p(\mu; L^2(\R))$ for all $p \in [1, \infty)$, and we denote the domain of the resulting operator by $\CW^{1,p}$. Then $\CW^{1,p}$ is the completion of $\CC$ with respect to the norm $\E[|F|^p]^{1/p} + \E[\| \partial_x \dD F\|_{L^2(\R)}^p]^{1/p}$. So writing
\[
   \mathcal{E}(F(u)) \assign 2 \| \partial_x \dD_x F(u) \|_{L^2(\R)}^2
\]
we have $\mathcal{E}(F(\cdot)) \in L^{p/2}(\mu)$ for all $F \in \CW^{1,p}$. Finally, we denote
\[
   \| F \|_1^2 \assign \E[\mathcal{E}(F(u_0))].
\]
The following martingale or It\^o trick is well known for Markov processes, see for example the monograph~\cite{bib:komorowskiLandimOlla}, and in the case of controlled processes on $\R_+ \times \T$ it is due to~\cite{bib:gubinelliJara}. The proof is in all cases essentially the same.

\begin{proposition}[It{\^o} trick]\label{prop:ito-trick}
  Let $T > 0$, $p \geqslant 1$ and $F \in L^p([0,T]; \CW^{1,p})$. Then we have for all $(u, \mathcal{A}) \in \CQ$
  \begin{equation}\label{eq:ito-trick}
     \mathbb{E} \left[ \sup_{t\leqslant T} \left| \int_0^t L_0 F (s, u_s) \dd s \right|^p \right] \lesssim T^{p / 2 - 1} \int_0^T \mathbb{E}  [\mathcal{E} (F (s, u_0))^{p / 2}] \dd s.
  \end{equation}
  For $p = 2$ we get in particular
  \[ \mathbb{E} \left[ \sup_{t \leqslant T} \left| \int_0^t L_0 F (s, u_s)
     \dd s \right|^2 \right] \lesssim  \int_0^T \| F (s, \cdummy) \|_1^2 \dd s. \]
\end{proposition}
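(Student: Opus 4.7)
The plan is to adapt the Kipnis--Varadhan forward-backward martingale trick to the Dirichlet-process (rather than semimartingale) setting of a controlled process, and then to control the resulting martingales by the Burkholder--Davis--Gundy inequality together with the stationarity of $u$. First I would use density of cylinder functions in $\CW^{1,p}$, together with a standard time mollification, to reduce to the case of $F$ smooth in time with $F(s, \cdot) \in \CC$ for every $s$. For such $F$, the controlled-process decomposition~(\ref{eq:controlled-decomposition}) and the zero quadratic variation of $\mathcal{A}$ allow a direct application of It\^o's formula to $F(t, u_t)$, yielding
\[
   F(t, u_t) - F(0, u_0) = \int_0^t (\partial_s F + L_0 F)(s, u_s)\, \dd s + M_t^{F} + R_t^{F},
\]
where $M^F$ is a continuous martingale with $[M^F]_t = \int_0^t \mathcal{E}(F(s, u_s))\, \dd s$ (from the white-noise term) and $R^F_t = \sum_i \int_0^t \partial_i f(s, u_s)\, \dd \mathcal{A}_s(\varphi_i)$ is a Russo--Vallois integral against the zero-QV process $\mathcal{A}$, itself of zero quadratic variation.

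Next I would run the same computation in the time-reversed filtration, using condition iii of the definition of $\CQ$ applied to $s \mapsto F(T-s, \hat u_s)$ with $\hat u_s = u_{T-s}$, to obtain an analogous identity with a reversed martingale $\hat M^F$ whose quadratic variation has the same form and a reversed Russo--Vallois drift $\hat R^F$. Evaluating both identities at $t = T$ and summing -- the change of variables $r = T-s$ in the reversed integral makes the $\partial_s F$ contributions cancel -- gives
\[
   2 \int_0^T L_0 F(s, u_s)\, \dd s = -(M_T^{F} + \hat M_T^{F}) - (R_T^{F} + \hat R_T^{F}).
\]
The cancellation $R_T^F + \hat R_T^F = 0$ follows from the explicit relation $\hat{\mathcal{A}}_t = -(\mathcal{A}_T - \mathcal{A}_{T-t})$ combined with the fact that Russo--Vallois integrals against a zero-QV process carry no second-order correction term and so transform cleanly under time reversal. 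Running the same argument up to each $t \leq T$ rather than only $T$ yields the pathwise bound $|\int_0^t L_0 F(s, u_s)\, \dd s| \leq \sup_{r \leq T}|M_r^F| + \sup_{r \leq T}|\hat M_r^F|$.

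The desired estimate now follows by the Burkholder--Davis--Gundy inequality applied to the two martingales. For $p = 2$ one gets directly $\E[\sup_{r \le T}|M_r^F|^2] \lesssim \E[\int_0^T \mathcal{E}(F(s, u_s))\, \dd s] = \int_0^T \|F(s, \cdot)\|_1^2\, \dd s$ after invoking the stationarity $u_s \stackrel{d}{=} u_0$, which is the sharper bound stated separately. For $p > 2$ an application of Jensen's inequality to the convex function $x \mapsto x^{p/2}$ with the normalised measure $\dd s/T$ on $[0, T]$ produces the factor $T^{p/2-1}$, and stationarity again replaces $u_s$ by $u_0$ on the right; the case $1 \leq p < 2$ follows by interpolation combined with a similar BDG estimate. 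I expect the main technical hurdle to be making precise the It\^o formula for a controlled (Dirichlet, non-semimartingale) process -- in particular defining $R^F$ as a Russo--Vallois integral against a process of merely zero quadratic variation and verifying the time-reversal cancellation $R^F_T + \hat R^F_T = 0$; once this foundational step is in place the remaining martingale analysis is standard.
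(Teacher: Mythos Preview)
Your proposal is correct and follows essentially the same route as the paper: reduce to time-smooth cylinder functions, apply the It\^o formula for Dirichlet processes forward and backward, add the two identities so that the boundary terms, the $\partial_s F$ terms, and the $\mathcal{A}$-integrals cancel (the latter via $\hat{\mathcal{A}}_t = -(\mathcal{A}_T - \mathcal{A}_{T-t})$), leaving $2\int_0^t L_0 F(s,u_s)\,\dd s = -(M^F_t + \hat M^F_T - \hat M^F_{T-t})$, and conclude with BDG and stationarity. Your remark that the Jensen step $(\int_0^T \mathcal{E}\,\dd s)^{p/2} \leq T^{p/2-1}\int_0^T \mathcal{E}^{p/2}\,\dd s$ needs $p\geq 2$ and that $1\leq p<2$ requires a separate argument is in fact a point the paper glosses over.
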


\begin{proof}
  We first assume that $F(t) \in \CC$ for all $t \in [0,T]$ and that $t \mapsto F(t,u) \in C^1(\R)$ for all $u \in \CS'(\R)$. Since $(u_t(\varphi))_{t\geqslant 0}$ is a Dirichlet process for all $\varphi \in \CS(\R)$ (the sum of a local martingale and a zero quadratic variation process) we can then apply the It\^o formula for Dirichlet processes, see~\cite{bib:russoVallois}, to $F$ and obtain
  \[
     F (t, u_t) = F (0, u_0) + \int_0^t (\partial_s F (s, u_s) + L_0 F (s, u_s)) \dd s + \int_0^t \partial_u F (s, u_s) \dd \mathcal{A}_s +  M^F_t, \qquad t \geqslant 0,
  \]
  for a continuous martingale $M^F$ with $M^F_0=0$ and quadratic variation $\dd [M^F]_s
  =\mathcal{E} (F (s, u_s)) \dd s$. Similarly we get for $\hat{u}_t = u_{T-t}$
  \begin{align*}
    F (T - T, \hat{u}_T) & = F (T - (T - t), \hat{u}_{T - t}) + \int_{T - t}^T
    (\partial_s F (T - s, \hat{u}_s) + L_0 F (T - s, \hat{u}_s)) \dd s\\
    &\quad + \int_{T - t}^T \partial_u F (T - s, \hat{u}_s) \dd
    \hat{\mathcal{A}}_s + \hat{M}^F_T - \hat{M}^F_{T - t},
  \end{align*}
  for a continuous backward martingale $\hat{M}^F$ with $\hat{M}^F_0 = 0$ and quadratic
  variation $\dd [\hat{M}^F]_s =\mathcal{E} (F (T - s, \hat{u}_s)) \dd s$. Adding these two formulas, we get
 \begin{equation*}
    0 = 2 \int_0^t L_0 F (s, u_s) \dd s + M^F_t + \hat{M}^F_T - \hat{M}^F_{T - t},
  \end{equation*}
 and thus the Burkholder-Davis-Gundy inequality yields
  \begin{align*}
    \mathbb{E} \left[ \sup_{t \leqslant T} \left| \int_0^t L_0 F (s, u_s) \dd s \right|^p \right] & \leqslant \mathbb{E} \Big[\sup_{t \leqslant T} | M^F_t + \hat{M}^F_T - \hat{M}^F_{T - t} |^p\Big] \lesssim \mathbb{E} [[M^F]_T^{p / 2}] +\mathbb{E} [ [\hat{M}^F]_T^{p / 2}]\\
    & \simeq \mathbb{E} \left[ \left( \int_0^T \mathcal{E} (F (s, u_s)) \dd  s \right)^{p / 2} \right] \leqslant T^{p / 2 - 1} \int_0^T \mathbb{E} [\mathcal{E} (F (s, u_0))^{p / 2}] \dd s.
  \end{align*}
  For a general $F \in L^p([0,T]; \CW^{1,p})$ we first approximate $F$ in $L^p([0,T]; \CW^{1,p})$ by a step function that is piecewise constant in time, then we approximate each of the finitely many values that the step function takes by a cylinder function, and finally we mollify the jumps of the new step function. In that way our bound extends to all of $L^p([0,T]; \CW^{1,p})$.
\end{proof}

\begin{remark}
   The right hand side of~\eqref{eq:ito-trick} does not involve the $L^p([0,T]; L^p(\mu))$ norm of $F$ and indeed it is possible to extend the result to the following space. Identify all $F, \tilde F \in \CC$ with $\E[\mathcal{E}(F(u_0) - \tilde{F}(u_0))^{p/2}]^{1/p}=0$ and write $\dot{\CW}^{1,p}$ for the completion of the resulting equivalence classes with respect to the norm $F \mapsto \E[\mathcal{E}(F(u_0))^{p/2}]^{1/p}$. Then~\eqref{eq:ito-trick} holds for all $F \in L^p([0,T]; \dot{\CW}^{1,p})$ provided that the integral on the left hand side in~\eqref{eq:ito-trick} makes sense. But we will not need this.
\end{remark}

\begin{remark}\label{rmk:ito with finite chaos}
   If in the setting of Proposition~\ref{prop:ito-trick} $F(s)$ has a finite chaos expansion of length $n$ for all $s \in [0,T]$ (see Section~\ref{sec:gaussian} for the definition), then also $\mathcal{E}(F(s))$ has a chaos expansion of length $n$ and therefore Gaussian hypercontractivity shows that for all $p \geqslant 1$
  \[
     \mathbb{E} \left[ \sup_{t \in [0, T]} \left| \int_0^t L_0 F (s, u_s) \dd s \right|^p \right] \lesssim T^{p / 2 - 1} \int_0^T \| F (s,    \cdummy) \|_1^p \dd s.
  \]
\end{remark}

The bound in Proposition~\ref{prop:ito-trick} is very powerful and allows us to control $\int_0^\cdot F(s,u_s) \dd s$ provided that we are able to solve the Poisson equation
\[
   L_0 G(s) = F(s)
\]
for all $s \in [0,T]$. Note that this is an infinite dimensional PDE which a priori is difficult to solve, but that we only need to consider it in $L^2(\mu)$ which has a lot of structure as a Gaussian Hilbert space. We will discuss this further in Section~\ref{sec:gaussian} below. Nonetheless, we will encounter situations where we are not able to solve the Poisson equation explicitly, and in that case we rely on the method of Kipnis and Varadhan allowing us to bound $\int_0^\cdot F(s,u_s) \dd s$ in terms of a certain variational norm of $F$. We define for $F \in L^2 (\mu)$
\[
  \| F \|_{- 1}^2 \assign \sup_{G \in \CC} \{ 2\mathbb{E} [F(u_0) G(u_0)] - \| G \|_1^2 \},
\]
and we write $F \in \dot{\CH}^{-1}$ if the right hand side is finite. We will need a slightly refined version of the Kipnis-Varadhan inequality which controls also the $p$-variation. Recall that
for $p \geqslant 1$ the $p$-variation of $f \colon [0, T] \rightarrow \mathbb{R}$
is
\[ \| f \|_{p - \tmop{var} ; [0, T]} \assign \sup \left\{ \left( \sum_{k =
   0}^{n - 1} | f (t_{k + 1}) - f (t_k) |^p \right)^{1 / p} : n \in
   \mathbb{N}, 0 = t_0 < \ldots < t_n = T \right\} . \]

\begin{corollary}[Kipnis-Varadhan inequality]\label{cor:KV}
  Let $T > 0$ and $F \in L^2 ( [0, T], \dot{\CH}^{- 1} \cap L^2 (\mu) )$,
  and let $(u, \mathcal{A}) \in \CQ$ be a controlled process. Then for all $p > 2$
  \[ \mathbb{E} \left[ \left\| \int_0^{\cdummy} F (s, u_s) \dd s
     \right\|_{p - \tmop{var} ; [0, T]}^2 \right] +\mathbb{E} \left[ \sup_{t
     \leqslant T} \left| \int_0^t F (s, u_s) \dd s \right|^2 \right]
     \lesssim \int_0^T \| F (s, \cdummy) \|_{- 1}^2 \dd s. \]
\end{corollary}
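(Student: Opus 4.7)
The strategy is the classical Kipnis--Varadhan resolvent method. For fixed $\lambda > 0$ and each $s \in [0, T]$, set
\[
   G_\lambda(s) \assign (\lambda - L_0)^{-1} F(s, \cdummy),
\]
which is well-defined in $L^2(\mu)$ because, by the upcoming Corollary~\ref{cor:OU-generator}, $L_0$ extends to a non-positive self-adjoint operator on $L^2(\mu)$. Optimizing $2\E[F(u_0) G(u_0)] - \|G\|_1^2$ over $G = c (-L_0)^{-1} F$ gives the identity $\|F\|_{-1}^2 = \tfrac{1}{2}\langle F, (-L_0)^{-1} F\rangle_{L^2(\mu)}$, so with the spectral decomposition $-L_0 = \int_0^\infty \nu \,\dd E_\nu$ the elementary inequalities $\nu/(\lambda+\nu)^2 \le 1/\nu$ and $\lambda^2/(\lambda+\nu)^2 \le \lambda/\nu$ yield the $\lambda$-uniform bounds
\[
   \|G_\lambda(s)\|_1^2 \leq 4 \|F(s,\cdummy)\|_{-1}^2, \qquad \lambda^2 \|G_\lambda(s)\|_{L^2(\mu)}^2 \leq 2\lambda \|F(s,\cdummy)\|_{-1}^2.
\]

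Next, from the resolvent equation rewrite $F(s, u_s) = \lambda G_\lambda(s, u_s) - L_0 G_\lambda(s, u_s)$. The forward/backward decomposition at the heart of the proof of Proposition~\ref{prop:ito-trick} applied to $G_\lambda$ gives
\[
   -2\int_0^t L_0 G_\lambda(s, u_s) \dd s = M^{G_\lambda}_t + \hat M^{G_\lambda}_T - \hat M^{G_\lambda}_{T-t}, \qquad t \in [0,T],
\]
where $M^{G_\lambda}$ and $\hat M^{G_\lambda}$ are continuous forward and backward martingales with $\dd [M^{G_\lambda}]_s = \mathcal{E}(G_\lambda(s, u_s)) \dd s$ (and analogously for $\hat M^{G_\lambda}$). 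The Burkholder--Davis--Gundy inequality controls the supremum and Lepingle's inequality for continuous martingales (applicable for $p > 2$) controls the $p$-variation, both by
\[
   \E\bigl[[M^{G_\lambda}]_T\bigr] = \int_0^T \E[\mathcal{E}(G_\lambda(s,u_s))] \dd s = \int_0^T \|G_\lambda(s)\|_1^2 \dd s \lesssim \int_0^T \|F(s,\cdummy)\|_{-1}^2 \dd s,
\]
using stationarity of $u_s$ under $\mu$ and the first uniform bound above; the backward martingale is handled identically.

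The remaining drift term $\lambda \int_0^\cdummy G_\lambda(s, u_s) \dd s$ has $p$-variation dominated by its total variation, so by Cauchy--Schwarz and the second uniform bound,
\[
   \E\Bigl[\Bigl\| \lambda \int_0^\cdummy G_\lambda(s, u_s) \dd s \Bigr\|_{p\text{-var};[0,T]}^2 \Bigr] \leq \lambda^2 T \int_0^T \|G_\lambda(s)\|_{L^2(\mu)}^2 \dd s \leq 2 \lambda T \int_0^T \|F(s,\cdummy)\|_{-1}^2 \dd s \xrightarrow{\lambda \to 0} 0,
\]
with the same bound for the supremum. Assembling the pieces yields the desired inequality uniformly in $\lambda > 0$, and a weak-compactness argument in $L^2(\P)$ (or Fatou along an a.s.-convergent subsequence) passes to the limit $\lambda \to 0$. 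The main technical subtlety I anticipate is that $G_\lambda(s)$ is not a cylinder function, so to legitimately invoke Proposition~\ref{prop:ito-trick} I would first approximate $G_\lambda(s)$ in the $\CW^{1,2}$-norm by cylinder functions (using that $\CC$ is a core for $L_0$ on $L^2(\mu)$) and then pass to the limit, relying on the closability of $L_0$ and of $\partial_x \dD$.
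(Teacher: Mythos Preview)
Your proof is correct and follows essentially the same resolvent strategy as the paper: solve $(\lambda - L_0)G_\lambda = F$, split $\int F = \int L_0 G_\lambda + \lambda \int G_\lambda$, handle the first piece via the forward--backward martingale decomposition together with L\'epingle's $p$-variation inequality, bound the second by total variation, and send $\lambda \to 0$ using $\|G_\lambda\|_1 \lesssim \|F\|_{-1}$ and $\lambda\|G_\lambda\|_{L^2(\mu)}^2 \lesssim \|F\|_{-1}^2$. The only superfluous step is your appeal to weak compactness or Fatou at the end: the left-hand side is a fixed quantity independent of $\lambda$, so once you have the $\lambda$-dependent upper bound you simply let $\lambda \to 0$ on the right (the paper derives the resolvent bounds via the duality inequality $\langle G_\lambda, F\rangle \le \|G_\lambda\|_1 \|F\|_{-1}$ rather than spectral calculus, but this is a cosmetic difference).
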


\begin{proof}
  The non-reversible version of the Kipnis-Varadhan inequality is due to~\cite{bib:wu}, and our proof is essentially the same as in~\cite{bib:komorowskiLandimOlla, bib:funakiQuastel}. But we are not aware of any reference for the statement about the $p$-variation. Note that since the integral vanishes in zero, its supremum norm can be controlled by its $p$-variation. Let $H \in L^2 ( [0, T] ; \CW^{1,2} )$ and
  decompose
  \begin{equation}
    \label{eq:KV pr1} \int_0^{\cdummy} F (s, u_s) \dd s = \int_0^{\cdummy}
    L_0 H (s, u_s) \dd s + \int_0^{\cdummy} (F - L_0 H) (s, u_s) \dd s.
  \end{equation}
  For the first term on the right hand side we apply the same martingale
  decomposition as in the proof of the It{\^o} trick to get $\int_0^t L_0 H
  (s, u_s) \dd s = - 1 / 2 (M^H_t + \hat{M}^H_T - \hat{M}^H_{T - t})$. By~\cite[Proposition~2]{bib:lepingle} we can therefore control the $p$-variation by
  \[ \mathbb{E} \left[ \left\| \int_0^{\cdummy} L_0 H (s, u_s) \dd s
     \right\|_{p - \tmop{var} ; [0, T]}^2 \right] \lesssim \mathbb{E} \left[
     \sup_{t \leqslant T} \left| \int_0^t L_0 H (s, u_s) \dd s \right|^2
     \right] \lesssim \int_0^T \| H (s, \cdummy) \|_1^2 \dd s, \]
  where the second inequality follows from Proposition~\ref{prop:ito-trick}. For the second
  term on the right hand side of~(\ref{eq:KV pr1}) we get
  \begin{align*}
    \mathbb{E} \left[ \left\| \int_0^{\cdummy} (F - L_0 H) (s, u_s) \dd s
    \right\|_{p - \tmop{var} ; [0, T]}^2 \right] & \leqslant \mathbb{E} \left[
    \left\| \int_0^{\cdummy} (F - L_0 H) (s, u_s) \dd s \right\|_{1 -
    \tmop{var} ; [0, T]}^2 \right]\\
    & \leqslant \mathbb{E} \left[ \left( \int_0^T | (F - L_0 H) (s, u_s) |  \dd s \right)^2 \right] \\
    & \leqslant T \int_0^T \| (F - L_0 H) (s) \|_{L^2
    (\mu)}^2 \dd s,
  \end{align*}
  and therefore overall
  \[
     \mathbb{E} \left[ \left\| \int_0^{\cdummy} F (s, u_s) \dd s  \right\|_{p - \tmop{var} ; [0, T]}^2 \right] \lesssim \int_0^T (\| H (s,     \cdummy) \|_1^2 + T \| (F - L_0 H) (s) \|_{L^2 (\mu)}^2) \dd s.
  \]
  Now take $H_{\lambda} (s)$ as the solution to the resolvent equation
  $(\lambda - L_0) H_{\lambda} (s) = - F (s)$. Note that unlike the Poisson
  equation, the resolvent equation is always solvable and $H_{\lambda} (s) = -
  \int_0^{\infty} e^{- \lambda r} P^{\tmop{OU}}_r F (s) \dd r$, where
  $P^{\tmop{OU}}$ is the semigroup generated by $L_0$. Then $(F - L_0
  H_{\lambda}) (s) = - \lambda H_{\lambda} (s)$ and by Lemma~\ref{lem:H1 norm} below we have $\|H_\lambda(s) \|_1^2 = 2 \langle H_\lambda(s), (-L_0) H_\lambda(s) \rangle_{L^2(\mu)}$, which yields
  \begin{gather*}
    \lambda \| H_{\lambda} (s) \|_{L^2 (\mu)}^2 + \frac{1}{2} \| H_{\lambda} (s) \|_1^2 =
    \langle H_{\lambda} (s), (\lambda - L_0) H_{\lambda} (s) \rangle_{L^2
    (\mu)}\\
    = \langle H_{\lambda} (s), - F (s) \rangle_{L^2 (\mu)} \leqslant \|
    H_{\lambda} (s) \|_1 \| F (s) \|_{- 1},
  \end{gather*}
  from where we get $\| H_{\lambda} (s) \|_1 \leqslant 2 \| F (s) \|_{- 1}$ and
  then also $\| H_{\lambda} (s) \|_{L^2 (\mu)}^2 \leqslant \lambda^{- 1} 2 \| F
  (s) \|^2_{- 1}$. Therefore,
  \[ \| H_{\lambda} (s) \|_1^2 + T \| (F - L_0 H_{\lambda}) (s)
     \|_{L^2 (\mu)}^2 = \| H_{\lambda} (s) \|_1^2 + T \| \lambda
     H_{\lambda} (s) \|_{L^2 (\mu)}^2 \lesssim \| F (s) \|_{- 1}^2 + \lambda
     T \| F (s) \|_{- 1}^2, \]
  and now it suffices to send $\lambda \rightarrow 0$.
\end{proof}

\subsection{Gaussian analysis}\label{sec:gaussian}

To turn the It\^o trick or the Kipnis-Varadhan inequality into a useful bound we must be able to either solve the Poisson equation $L_0 G = F$ for a given $F$ or to control the variational norm appearing in the Kipnis-Varadhan inequality. Here we discuss how to exploit the Gaussian structure of $L^2(\mu)$ in order to do so. For details on Gaussian Hilbert spaces we refer to~\cite{bib:janson, bib:nualart}. Since $L^2 (\mu)$ is a Gaussian Hilbert space, we have the orthogonal decomposition
\[
   L^2(\mu) = \bigoplus_{n \geqslant 0} \mathcal{H}_n,
\]
where $\mathcal{H}_n$ is the closure in $L^2(\mu)$ of the span of all random variables of the form $u\mapsto H_n(u(\varphi))$, with $H_n (x) = e^{x^2 / 2} (- 1)^n \partial^n_x e^{- x^2 / 2}$ being the $n$-th Hermite polynomial and where $\varphi \in \CS(\R)$ with $\| \varphi\|_{L^2(\R)} = 1$. The space $\mathcal{H}_n$ is called the \emph{$n$-th homogeneous chaos}, and $\bigoplus_{k=0}^n \mathcal{H}_k$ is the \emph{$n$-th inhomogeneous chaos}. Also, $\mu$ is the law of the white noise on $\R$ and therefore we can identify
\[
   \mathcal{H}_n = \{ W_n(f_n) : f_n \in L^2_s(\R^n) \},
\]
where $W_n(f_n)$ is the multiple Wiener-It\^o integral of $f_n \in L^2_s (\mathbb{R}^n)$, that is
\[
   W_n (f_n) \assign \int_{\mathbb{R}^n} f (y_1, \ldots, y_n) W (\dd y_1 \ldots \dd y_n).
\]
Here $L^2_s (\mathbb{R}^n)$ are the equivalence classes of $L^2 (\mathbb{R}^n)$ that are induced by the seminorm
\[ \| f \|_{L^2_s (\mathbb{R}^n)} \assign \| \tilde{f} \|_{L^2_s
   (\mathbb{R}^n)}, \qquad \tilde{f} (x_1, \ldots, x_n) = \frac{1}{n!}
   \sum_{\sigma \in \mathcal{S}_n} f (x_{\sigma (1)}, \ldots, x_{\sigma (n)}),
\]
where $\mathcal{S}_n$ denotes the set of permutations of $\{1,\dots,n\}$. Of course, $\lVert \cdot \rVert_{L^2_s (\mathbb{R}^n)}$ is a norm on $L^2_s (\mathbb{R}^n)$ and we usually identify an equivalence class in $L^2_s (\mathbb{R}^n)$ with its symmetric representative. The link between the multiple stochastic integrals $W_n$ and the Malliavin derivative $\dD$ is explained in the following partial integration by parts rule which will be used for some explicit computations below.

\begin{lemma}
  \label{lem:IBP}Let $f \in L^2_s (\mathbb{R}^n)$ and let $F$ be Malliavin differentiable in $L^2(\mu)$. Then
  \[ \mathbb{E} [W_n (f) F] = \int_{\mathbb{R}} \mathbb{E} [W_{n - 1}
     (f (y, \cdummy)) \dD_y F] \dd y \]
\end{lemma}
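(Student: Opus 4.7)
The identity is the integration-by-parts formula for multiple Wiener-It\^o integrals, and my plan is to derive it from the duality between the Malliavin derivative $\dD$ and the Skorokhod divergence operator $\delta$. Recall that $\delta$ is by definition the $L^2(\mu)$-adjoint of $\dD \colon L^2(\mu) \to L^2(\mu; L^2(\R))$, which means that for every $v$ in the domain of $\delta$ and every Malliavin differentiable $F \in L^2(\mu)$ one has
\[
   \mathbb{E}[\delta(v) F] = \int_{\R} \mathbb{E}[v(y) \dD_y F] \dd y.
\]
Given this, it is enough to show that the process $v(y) \assign W_{n-1}(f(y,\cdot))$ (which is a well-defined element of $L^2(\mu; L^2(\R))$ since $f \in L^2_s(\R^n)$) lies in $\mathrm{Dom}(\delta)$ and satisfies $\delta(v) = W_n(f)$. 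The lemma then follows by applying the duality formula.

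To establish the identification $W_n(f) = \delta(v)$ I would argue by approximation. For a simple symmetric tensor of the form $f = \mathrm{sym}(\mathbf{1}_{A_1} \otimes \cdots \otimes \mathbf{1}_{A_n})$ with pairwise disjoint $A_i$ of finite Lebesgue measure, $W_n(f)$ reduces to the product $W(\mathbf{1}_{A_1}) \cdots W(\mathbf{1}_{A_n})$ and $v(y)$ is, up to symmetrization, a sum of products of the remaining $W(\mathbf{1}_{A_j})$ multiplied by $\mathbf{1}_{A_i}(y)$. The desired identity then follows from the elementary product rule for the divergence, $\delta(G h) = G\, W(h) - \langle \dD G, h\rangle_{L^2(\R)}$, applied iteratively. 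Simple symmetric tensors are dense in $L^2_s(\R^n)$, and since both $f \mapsto W_n(f)$ and $f \mapsto \delta(W_{n-1}(f(y,\cdot)))$ are bounded (linear) from $L^2_s(\R^n)$ to $L^2(\mu)$, the identity extends to all $f \in L^2_s(\R^n)$.

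A slicker alternative I would keep in reserve is to verify the claim on exponential cylinder functions $F = \exp(W(\varphi) - \tfrac12 \|\varphi\|_{L^2(\R)}^2)$, where $\dD_y F = \varphi(y) F$, and to recognise both sides of the asserted identity as the coefficient of $t^n/n!$ in a Cameron--Martin shifted exponential moment; the result then extends to Malliavin differentiable $F$ by density of the linear span of such exponentials in $\mathrm{Dom}(\dD)$ together with closability of $\dD$. The only obstacle in either route is the standard bookkeeping of symmetrization constants to make sure that no combinatorial factor is lost; beyond that, no substantive difficulty arises.
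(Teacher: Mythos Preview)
Your proposal is correct, but the route differs from the paper's. The paper argues directly: by density and polarization it reduces to $f = f_1^{\otimes n}$ with $\|f_1\|_{L^2(\R)} = 1$, so that $W_n(f) = H_n(W_1(f_1))$ for the $n$-th Hermite polynomial $H_n$; it then takes $F = \Phi(W_1(f_1), W_1(\varphi_1), \dots, W_1(\varphi_m))$ with the $\varphi_i$ orthonormal and orthogonal to $f_1$, and reduces the claim to the one-dimensional Gaussian identity $\int H_n(x_1)\Phi(x)\,\nu_{m+1}(\dd x) = \int H_{n-1}(x_1)\partial_{x_1}\Phi(x)\,\nu_{m+1}(\dd x)$. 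This is entirely self-contained and uses nothing beyond the Hermite recursion $H_n(x) e^{-x^2/2} = (-1)^n \partial_x^n e^{-x^2/2}$.

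Your approach instead invokes the abstract duality $\mathbb{E}[\delta(v)F] = \mathbb{E}[\langle v, \dD F\rangle_{L^2(\R)}]$ together with the standard identification $\delta\big(W_{n-1}(f(y,\cdot))\big) = W_n(f)$ for symmetric $f$. This is cleaner conceptually and makes the statement recognisable as a special case of the general $\dD$--$\delta$ adjunction, at the cost of importing more Malliavin-calculus machinery (closability of $\dD$, construction of $\delta$, the recursion $W_n = \delta \circ W_{n-1}$). Your sketch of the latter via disjoint-support indicator tensors and the product rule $\delta(Gh) = G\,W_1(h) - \langle \dD G, h\rangle$ is fine; just be aware that the boundedness of $f \mapsto \delta(W_{n-1}(f(y,\cdot)))$ on $L^2_s(\R^n)$ is itself essentially the content of the identity you are proving, so in practice one quotes it from a reference such as \cite{bib:nualart} rather than rederiving it. The exponential-vector alternative you mention is also standard and works.
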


\begin{proof}
  The proof is virtually the same as for~\cite[Lemma~1.2.1]{bib:nualart}. Since the span of functions of the form $f = f_1
  \otimes \ldots \otimes f_n$ is dense in $L^2_s (\mathbb{R}^n)$, it suffices
  to argue for such $f$. By polarization it suffices to consider $f_1 = \ldots
  = f_n$ with $\| f_1 \|_{L^2 (\mathbb{R})} = 1$, for which $W_n (f) = H_n
  (W_1 (f_1))$ for the $n$-th Hermite polynomial $H_n$. By another
  approximation argument we may suppose that $F = \Phi (W_1 (f_1), W_1
  (\varphi_1), \ldots, W_1 (\varphi_m))$ for orthonormal $\varphi_1, \ldots, \varphi_m \in L^2
  (\mathbb{R})$ that are also orthogonal to $f_1$ and for $\Phi \in
  C^{\infty}_c(\R^{m+1})$. So if $\nu_{m + 1}$ denotes the $(m + 1)$-dimensional
  standard normal distribution, then
  \begin{align*}
    \mathbb{E} [W_n (f) F] & =\mathbb{E} [H_n (W_1 (f_1)) \Phi (W_1 (f_1),
    W_1 (\varphi_1), \ldots, W_1 (\varphi_m))]\\
    &= \int_{\mathbb{R}^{m + 1}} H_n (x_1) \Phi (x_1, x_2, \ldots, x_{m + 1})
    \nu_{m + 1} (\dd x_1, \ldots, \dd x_{m + 1})\\
    &= \int_{\mathbb{R}^{m + 1}} H_{n - 1} (x_1) \partial_{x_1} \Phi (x_1,
    x_2, \ldots, x_{m + 1}) \nu_{m + 1} (\dd x_1, \ldots, \dd x_{m +
    1})\\
    &=\mathbb{E} \left[ H_{n - 1} (W_1 (f_1)) \int_{\mathbb{R}} \dD_y F
    f_1 (y) \dd y \right] = \int_{\mathbb{R}} \mathbb{E} [W_{n - 1} (f
    (y, \cdummy)) \dD_y F] \dd y,
  \end{align*}
  which concludes the proof.
\end{proof}

Recall that so far we defined the operator $L_0$ acting on cylinder functions. If we consider a cylinder function $F \in \mathcal{H}_n$ for some given $n$, then the action of $L_0$ is particularly simple.

\begin{lemma}\label{lem:generator-chaos}
   Let $n \geqslant 0$ and $F \in \CC$ be such that in $L^2(\mu)$ we have $F = W_n(f_n)$ for $f_n \in H^2_s (\mathbb{R}^n) \subset L^2_s (\mathbb{R}^n)$, the twice weakly differentiable symmetric functions from $\mathbb{R}^n$ to $\mathbb{R}$ that satisfy
  \[ \| f_n \|^2_{H^2_s (\mathbb{R}^n)} \assign \| f_n \|^2_{L^2_s
     (\mathbb{R}^n)} + \| \Delta f_n \|^2_{L^2_s (\mathbb{R}^n)} < \infty .
  \]
  Then
  \[
     L_0 F =  L_0 W(f_n) = W_n (\Delta f_n) \quad \text{in } L^2(\mu).
  \]
\end{lemma}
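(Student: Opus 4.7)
The plan is to reduce the claim to a Hermite polynomial identity on pure tensors by exploiting linearity; no closability or limiting argument should be needed. Since $F \in \CC$ the associated symmetric tensor $f_n \in L^2_s(\R^n)$ has finite rank: after orthonormalizing the finitely many test functions appearing in the cylinder representation of $F$, the tensor $f_n$ lies in the algebraic symmetric $n$-th tensor power of a finite-dimensional subspace of $\CS(\R)$, hence admits a polarization expansion as a finite linear combination of pure tensors $\varphi^{\otimes n}$ with $\varphi \in \CS(\R)$. Since $L_0$, $W_n$ and $f_n \mapsto \Delta f_n$ are all linear, it is enough to treat $f_n = \varphi^{\otimes n}$; after rescaling $\varphi$ to have $\|\varphi\|_{L^2}=1$, the cylinder function becomes $F(u) = W_n(\varphi^{\otimes n}) = H_n(u(\varphi)) \in \CC$.

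For this $F$ the definition of $L_0$ on cylinder functions, combined with $H_n' = n H_{n-1}$ and $H_n'' = n(n-1) H_{n-2}$, yields
\[
   L_0 F(u) = n H_{n-1}(u(\varphi)) u(\Delta \varphi) + n(n-1) \|\partial_x \varphi\|_{L^2}^2 H_{n-2}(u(\varphi)).
\]
The integration-by-parts identity $\langle \Delta \varphi, \varphi \rangle_{L^2} = -\|\partial_x \varphi\|_{L^2}^2$ suggests the decomposition $\Delta \varphi = -\|\partial_x \varphi\|_{L^2}^2 \varphi + \psi$ with $\psi \perp \varphi$ in $L^2(\R)$. Substituting $u(\Delta\varphi) = -\|\partial_x\varphi\|_{L^2}^2 u(\varphi) + u(\psi)$ and applying the three-term recurrence $y H_{n-1}(y) = H_n(y) + (n-1) H_{n-2}(y)$ causes all $H_{n-2}$ contributions to cancel, leaving
\[
   L_0 F(u) = -n \|\partial_x \varphi\|_{L^2}^2 H_n(u(\varphi)) + n H_{n-1}(u(\varphi)) u(\psi).
\]

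On the Wiener-It\^o side, $\Delta(\varphi^{\otimes n}) = \sum_{i=1}^n \varphi^{\otimes(i-1)} \otimes \Delta \varphi \otimes \varphi^{\otimes(n-i)}$ is already symmetric and equals $n\,\varphi^{\otimes(n-1)} \otimes_s \Delta \varphi$, so the same decomposition of $\Delta \varphi$ gives
\[
   W_n(\Delta \varphi^{\otimes n}) = -n \|\partial_x \varphi\|_{L^2}^2 H_n(u(\varphi)) + n W_n(\varphi^{\otimes(n-1)} \otimes_s \psi).
\]
The remaining task, which I expect to need the most bookkeeping, is to identify $W_n(\varphi^{\otimes(n-1)} \otimes_s \psi) = H_{n-1}(u(\varphi)) u(\psi)$ when $\|\varphi\|_{L^2}=1$ and $\psi \perp \varphi$. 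The It\^o isometry gives $\|\varphi^{\otimes(n-1)} \otimes_s \psi\|_{L^2_s}^2 = \|\psi\|_{L^2}^2/n$, so $\|W_n(\varphi^{\otimes(n-1)} \otimes_s \psi)\|_{L^2(\mu)}^2 = (n-1)! \|\psi\|_{L^2}^2$, matching $\|H_{n-1}(u(\varphi)) u(\psi)\|_{L^2(\mu)}^2$ by independence of $u(\varphi)$ and $u(\psi)$. Both random variables lie in the one-dimensional subspace of $\mathcal{H}_n$ spanned by the Hermite product in the orthonormal pair $(\varphi, \psi/\|\psi\|_{L^2})$, so equality of norms fixes them up to a sign that is $+1$ by a Wick-product interpretation of $W_n$ (alternatively by Lemma~\ref{lem:IBP}). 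Combined with the previous displays this settles the identity on pure tensors, and the polarization and linearity reduction from the first step then returns the general statement $L_0 W_n(f_n) = W_n(\Delta f_n)$ in $L^2(\mu)$.
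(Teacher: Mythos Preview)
Your proof is correct and follows essentially the same strategy as the paper: reduce by polarization to $f_n=\varphi^{\otimes n}$ with $\|\varphi\|_{L^2}=1$, write $F=H_n(u(\varphi))$, and compute both sides via the Hermite--Wiener correspondence. The only computational difference is that the paper applies the multiplication formula $W_{n-1}(\varphi^{\otimes(n-1)})W_1(\Delta\varphi)=W_n(\varphi^{\otimes(n-1)}\otimes\Delta\varphi)+(n-1)\langle\varphi,\Delta\varphi\rangle W_{n-2}(\varphi^{\otimes(n-2)})$ directly (Nualart, Proposition~1.1.2), so the $H_{n-2}$ contraction cancels in one line; you instead split $\Delta\varphi$ orthogonally and use the three-term recurrence, which amounts to proving the same product formula in the two special cases (parallel and orthogonal second factor). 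Your final identification $W_n(\varphi^{\otimes(n-1)}\otimes_s\psi)=H_{n-1}(u(\varphi))u(\psi)$ for $\psi\perp\varphi$ is exactly the orthogonal instance of that product formula, so citing it would streamline the span-plus-norm-plus-sign argument. Your observation that $F\in\CC$ forces $f_n$ to have finite rank, so no closure step is needed, is a legitimate (and slightly cleaner) way to finish compared with the paper's density argument.
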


\begin{proof}
  Consider first a functional $G \in \CC$ of the form $G (u) = H_n (u (\varphi))$, where $\varphi \in \CS(\R)$ with $\| \varphi \|_{L^2 (\mathbb{R})} = 1$. In that case
  \begin{align}\label{eq:generator-chaos pr1}
    L_0 F (u) & = H_n' (u (\varphi)) u (\Delta \varphi) + H_n'' (u(\varphi))  \langle \partial_x \varphi, \partial_x \varphi \rangle_{L^2(\R)} \\
    & = n H_{n - 1} (u(\varphi)) u (\Delta \varphi) - n (n - 1) H_{n - 2} (u (\varphi)) \langle \varphi,  \Delta \varphi \rangle_{L^2(\R)},
  \end{align}
  where in the second step we used that $H_k' = k H_{k - 1}$ for $k \geqslant 1$ and $H'_0 = 0$. Now we use that $\| \varphi
  \|_{L^2 (\mathbb{R})} = 1$ to rewrite $H_k (W_1 (\varphi)) = W_k (\varphi^{\otimes k})$, see Proposition~1.1.4 of~\cite{bib:nualart} and note the additional factor $k!$ in our definition of $H_k$ compared to the
  one in~\cite{bib:nualart}. Thus, we can apply Proposition~1.1.2 of~\cite{bib:nualart} to compute the first term on the right hand side:
  \begin{align*}
    n H_{n - 1} (X (\varphi)) X (\Delta \varphi) & = n W_{n - 1} (\varphi^{\otimes (n - 1)}) W_1 (\Delta \varphi)\\
    & = n W_n (\varphi^{\otimes (n - 1)} \otimes \Delta \varphi) + n (n - 1) W_{n - 2} (\varphi^{\otimes (n - 2)}) \langle \varphi, \Delta \varphi \rangle_{L^2(\R)}\\
     & = W_n (\Delta \varphi^{\otimes n}) + n (n - 1) H_{n - 2} (X (\varphi)) \langle \varphi, \Delta \varphi \rangle_{L^2(\R)},
  \end{align*}
  where in the first term on the right hand side $\Delta$ denotes the Laplacian on $\mathbb{R}^n$. Plugging this back into~(\ref{eq:generator-chaos pr1}) we obtain $L_0 W_n (\varphi^{\otimes n}) = W_n (\Delta
  \varphi^{\otimes n})$. By polarization this extends to $W_n (\varphi_1
  \otimes \ldots \otimes \varphi_n)$, and then to general $\varphi \in H^2_s
  (\mathbb{R}^n)$ by taking the closure of the span of functions of the form
  $\varphi_1 \otimes \ldots \otimes \varphi_n$ with $\varphi_i \in \CS(\R)$.
\end{proof}

\begin{corollary}\label{cor:OU-generator}
   The operator $L_0$ is closable in $L^2(\mu)$ and the domain of its closure, still denoted with $L_0$, is
  \[ \tmop{dom} (L_0) = \left\{ F = \sum_{n \geqslant 0} W_n (f_n) : f_n \in
     H^2_s (\mathbb{R}^n) \text{ for all } n \in \mathbb{N} \text{ and }
     \sum_{n \geqslant 0} n! \| f_n \|_{H^2_s (\mathbb{R}^n)}^2 < \infty
     \right\} . \]
  For $F \in \tmop{dom} (L_0)$ we have
  \begin{equation}
    \label{eq:L0-def} L_0 F = \sum_{n \geqslant 0} W_n (\Delta f_n) .
  \end{equation}
\end{corollary}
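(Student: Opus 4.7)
The plan is to first establish that $L_0$ is symmetric on $\CC$ (hence closable), and then to identify its closure explicitly via the chaos decomposition, using Lemma~\ref{lem:generator-chaos} as the chaos-wise action.

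First, I would verify that $L_0$ is symmetric on $\CC$. For $F, G \in \CC$ a direct Gaussian integration by parts (using Lemma~\ref{lem:IBP} or equivalently the reversibility of the Ornstein--Uhlenbeck process $(X, 0) \in \CQ$) yields
\[
  \E[(L_0 F)(u_0) G(u_0)] = -2 \E\bigl[\langle \partial_x \dD F(u_0), \partial_x \dD G(u_0)\rangle_{L^2(\R)}\bigr] = \E[F(u_0) (L_0 G)(u_0)].
\]
Since $\CC$ is dense in $L^2(\mu)$, the operator $L_0$ is densely defined and symmetric, and therefore closable; call its closure $\overline{L_0}$.

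Next, I introduce the candidate operator $\tilde L_0$ on the right-hand side domain
\[
   \tmop{dom}(\tilde L_0) \assign \Bigl\{ F = \sum_{n \geqslant 0} W_n(f_n) : f_n \in H^2_s(\R^n),\ \sum_{n \geqslant 0} n!\, \|f_n\|_{H^2_s(\R^n)}^2 < \infty \Bigr\},
\]
with $\tilde L_0 F \assign \sum_n W_n(\Delta f_n)$. The series on the right converges in $L^2(\mu)$ because by orthogonality of the chaoses and the isometry $\E[W_n(g)^2] = n!\|g\|_{L^2_s}^2$,
\[
  \|\tilde L_0 F\|_{L^2(\mu)}^2 = \sum_{n \geqslant 0} n!\, \|\Delta f_n\|_{L^2_s(\R^n)}^2 \leqslant \sum_{n \geqslant 0} n!\, \|f_n\|_{H^2_s(\R^n)}^2.
\]
Closedness of $\tilde L_0$ follows from the closedness of $\Delta$ on $L^2(\R^n)$ with domain $H^2(\R^n)$ applied chaos-wise: if $F^{(k)} \to F$ and $\tilde L_0 F^{(k)} \to G$ in $L^2(\mu)$, then by orthogonality each chaos component converges in $L^2_s(\R^n)$, and the limit $f_n$ lies in $H^2_s$ with $\Delta f_n$ equal to the corresponding chaos component of $G$; dominated convergence in $n$ then gives $F \in \tmop{dom}(\tilde L_0)$ and $\tilde L_0 F = G$.

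It remains to show $\overline{L_0} = \tilde L_0$. For the inclusion $\overline{L_0} \subset \tilde L_0$, given $F \in \tmop{dom}(\overline{L_0})$ with $L_0$-approximating sequence $(F^{(k)}) \subset \CC$, one decomposes each $F^{(k)}$ into its (finitely many, since $F^{(k)}$ is a polynomial in Gaussians) chaos components and uses Lemma~\ref{lem:generator-chaos} together with orthogonality and the closedness of $\tilde L_0$ to conclude $F \in \tmop{dom}(\tilde L_0)$ and $\overline{L_0} F = \tilde L_0 F$. For the reverse inclusion $\tilde L_0 \subset \overline{L_0}$, given $F = \sum_n W_n(f_n) \in \tmop{dom}(\tilde L_0)$, one truncates at chaos level $N$ and approximates each $f_n$ by symmetrized tensors $\sum_j \varphi_{1,j}^{(n)} \otimes \cdots \otimes \varphi_{n,j}^{(n)}$ with $\varphi_{i,j}^{(n)} \in \CS(\R)$, which is possible because such tensors are dense in $H^2_s(\R^n)$. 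By Lemma~\ref{lem:generator-chaos} the corresponding $W_n$'s are cylinder functions on which $L_0$ acts as $\tilde L_0$, and a diagonal argument (controlled by the weighted series $\sum n! \|f_n\|_{H^2_s}^2$) produces a sequence in $\CC$ converging to $F$ in $L^2(\mu)$ with $L_0$-images converging to $\tilde L_0 F$.

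The only slightly delicate step is the diagonal approximation in the last paragraph, where one must simultaneously truncate in $n$ and mollify each $f_n$ while keeping the $H^2_s$-weighted sum of errors under control; this is routine given the explicit weights $n!$ and the density of Schwartz tensor products in $H^2_s(\R^n)$.
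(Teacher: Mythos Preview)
Your approach is essentially the same as the paper's: both reduce the question to the chaos-wise isometry $\E[W_n(g)^2] = n!\|g\|_{L^2_s}^2$ together with the closedness of $\Delta$ on $L^2_s(\R^n)$ with domain $H^2_s(\R^n)$, invoking Lemma~\ref{lem:generator-chaos} for the action on individual chaoses. The paper's argument is more compressed (it simply observes that for any $F\in\CC$ one has $L_0 F=\sum_n W_n(\Delta f_n)$ and reads off the graph norm), while you spell out closability via symmetry and the two inclusions $\overline{L_0}\subset\tilde L_0\subset\overline{L_0}$ explicitly.

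One inaccuracy to fix: in the inclusion $\overline{L_0}\subset\tilde L_0$ you write that each $F^{(k)}\in\CC$ has ``finitely many'' chaos components ``since $F^{(k)}$ is a polynomial in Gaussians''. This is false as stated: by definition $\CC$ consists of $f(u(\varphi_1),\dots,u(\varphi_m))$ with $f\in C^2$ of polynomial growth, not $f$ polynomial, so the chaos expansion of a general cylinder function is infinite. The remedy is exactly the ``standard approximation argument'' the paper alludes to (and which you in fact use for the reverse inclusion): approximate $f$ by polynomials, or equivalently use that the span of $W_n(\varphi_1\otimes\cdots\otimes\varphi_n)$ with $\varphi_i\in\CS(\R)$ is already contained in $\CC$ and forms a core. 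Once you know $L_0 F=\sum_n W_n(\Delta f_n)$ for all $F\in\CC$, your argument goes through unchanged.
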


\begin{proof}
  Let $F(u) = f(u(\varphi_1), \dots, u(\varphi_n)) \in \CC$ with chaos expansion $F = \sum_{n \geqslant 0} W_n( f_n)$. By a standard approximation argument it follows that $L_0 F = \sum_{n \geqslant 0} W_n(\Delta f_n)$. But then formula (iii) on page~9 of~\cite{bib:nualart} yields
  \[
     \E[|F|^2] = \sum_{n \geqslant 0} n! \| f_n \|_{L^2_s(\R^n)}^2, \qquad \E[| L_0 F|^2] = \sum_{n \geqslant 0} n! \| \Delta f_n \|_{L^2_s(\R^n)}^2,
  \]
  from where our claim readily follows because the Laplace operator on $L^2_s(\R^n)$ is a closed operator with domain $H^2_s(\R^n)$.
\end{proof}

Before we continue, let us link the $\| \cdot \|_1$ norm defined in Section~\ref{sec:ito} with the operator $L_0$.

\begin{lemma}\label{lem:H1 norm}
  For $F \in \CC$ we have
  \[
     \| F \|_1^2 =\mathbb{E} [\mathcal{E} (F (u_0))] = 2 \E[ F(u_0) (-L_0 F)(u_0)].
  \]
\end{lemma}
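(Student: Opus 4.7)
The plan is a direct finite-dimensional Gaussian computation. Fix $F \in \CC$ of the form $F(u) = f(u(\varphi_1),\dots,u(\varphi_n))$ with $\varphi_i \in \CS(\R)$. The first step is to unfold the middle quantity: by the definition of $\dD$,
\[
\mathcal{E}(F(u)) = 2\Big\|\sum_{i=1}^n \partial_i f(u(\varphi))\, \partial_x \varphi_i\Big\|_{L^2(\R)}^{2} = 2 \sum_{i,j=1}^n (\partial_i f\cdot \partial_j f)(u(\varphi))\, \langle \partial_x \varphi_i, \partial_x \varphi_j\rangle_{L^2(\R)},
\]
so that $\|F\|_1^2 = 2 \sum_{i,j} \langle \partial_x \varphi_i, \partial_x \varphi_j\rangle\, \E[\partial_i f(u_0(\varphi))\, \partial_j f(u_0(\varphi))]$ after taking expectation under $\mu$. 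It therefore suffices to prove the analogous identity, without the factor $2$, for $\E[F\,(-L_0 F)]$.

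For this I apply standard Gaussian integration by parts to the centered Gaussian vector $(u_0(\varphi_1),\dots,u_0(\varphi_n))$, which has covariance matrix $\Sigma_{ij} = \langle \varphi_i,\varphi_j\rangle_{L^2(\R)}$. For any smooth $G\colon\R^n\to\R$ of polynomial growth, IBP gives
\[
\E[u_0(\Delta\varphi_i)\, G(u_0(\varphi))] = \sum_j \langle \Delta\varphi_i,\varphi_j\rangle\, \E[\partial_j G(u_0(\varphi))] = -\sum_j \langle \partial_x\varphi_i,\partial_x\varphi_j\rangle\, \E[\partial_j G(u_0(\varphi))],
\]
and this is applicable because the polynomial-growth clause built into the definition of $\CC$ puts everything in every $L^p(\mu)$. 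Taking $G = f\cdot \partial_i f$, expanding $\partial_j G = \partial_j f\cdot \partial_i f + f\cdot \partial_{ij}^2 f$, summing over $i$, and adding the second-order term $\sum_{i,j}\partial_{ij}^2 f\,\langle \partial_x \varphi_i,\partial_x \varphi_j\rangle$ that appears in the definition of $L_0 F$, the $\partial_{ij}^2 f$ contributions cancel exactly. What remains is
\[
\E[F\, L_0 F] = -\sum_{i,j} \langle \partial_x \varphi_i, \partial_x \varphi_j\rangle_{L^2(\R)}\, \E[\partial_i f(u_0(\varphi))\, \partial_j f(u_0(\varphi))],
\]
which is exactly what is needed.

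There is no real obstacle here; all manipulations are justified by the polynomial-growth assumption built into the definition of $\CC$. As a sanity check, the same identity is transparent in the Wiener chaos decomposition: for $F = \sum_n W_n(f_n)$ a finite sum, Lemma \ref{lem:generator-chaos} gives $L_0 F = \sum_n W_n(\Delta f_n)$, whence chaos orthogonality and integration by parts on $\R^n$ yield $\E[F(-L_0 F)] = \sum_n n!\,\|\nabla f_n\|_{L^2_s(\R^n)}^2$; meanwhile the identity $\partial_x \dD_x W_n(f_n) = n W_{n-1}(\partial_1 f_n(x,\cdot))$ combined with the symmetry $\|\nabla f_n\|^2 = n\|\partial_1 f_n\|^2$ produces the same expression for $\tfrac{1}{2}\|F\|_1^2$.
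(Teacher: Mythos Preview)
Your proof is correct. The paper itself does not give an argument here but defers to~\cite{bib:ebp}, Section~2.4; your direct computation via Gaussian integration by parts on the finite-dimensional vector $(u_0(\varphi_1),\dots,u_0(\varphi_n))$ is exactly the standard way to establish this identity and is essentially what one finds in that reference. The chaos-expansion sanity check you add is also valid and in fact anticipates the computation the paper carries out a few lines later in Lemma~\ref{lem:H1 chaos}.
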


\begin{proof}
   See~\cite[Section~2.4]{bib:ebp} for a proof in the periodic case which works also in our setting.
\end{proof}

Next, we define two auxiliary Hilbert spaces which will be useful in controlling additive functionals of controlled processes.

\begin{definition}
  We identify all $F, \tilde F \in \CC$ with $\|F - \tilde{F} \|_1 = 0$, and we write $\dot{\CH}^1$ for the completion of the equivalence classes of $\CC$ with respect to $\lVert \cdot \rVert_1$.
  
  Similarly we identify $F, \tilde F \in L^2(\mu)$ with $\| F \|_{- 1} + \| \tilde F \|_{- 1} < \infty$ if $\|F - \tilde{F} \|_{- 1} = 0$ and the space $\dot{\CH}^{- 1}$ is defined as the completion of the equivalence classes with respect to $\lVert \cdot \rVert_{- 1}$.
\end{definition}

\begin{definition}
  Recall that for $r \in \mathbb{R}$ and $n \in \mathbb{N}$, the homogeneous
  Sobolev space $\dot{H}^r (\mathbb{R}^n)$ is constructed in the following
  way: We set for $f \in \CS(\R)$
  \[ \| f \|_{\dot{H}^r (\mathbb{R}^n)}^2 \assign \int_{\mathbb{R}^n} |
     \hat{f} (z) |^2 | 2 \pi z |^{2 r} \dd z \in [0, \infty], \]
  and consider only those $f$ with $\| f \|_{\dot{H}^r(\R^n)} < \infty$, where we
  identify $f$ and $g$ if $\|f - g \|_{\dot{H}^r(\R^n)} = 0$. The
  space $\dot{H}^r (\mathbb{R}^n)$ is then the completion of the equivalence
  classes with respect to $\lVert \cdot \rVert_{\dot{H}^r(\R^n)}$.
  
  We write $\dot{H}^r_s (\mathbb{R}^n)$ for the space that is obtained if we
  perform the same construction replacing $\lVert \cdot \rVert_{\dot{H}^r
  (\mathbb{R}^n)}$ by
  \[ \| f \|_{\dot{H}_s^r (\mathbb{R}^n)}^2 \assign \int_{\mathbb{R}^n} |
     \widehat{\tilde{f}} (z) |^2 | 2\pi z |^{2 r} \dd z, \qquad \tilde{f}
     (x_1, \ldots, x_n) = \frac{1}{n!} \sum_{\sigma \in \mathcal{S}_n} f
     (x_{\sigma (1)}, \ldots, x_{\sigma (n)}).
  \]
\end{definition}

\begin{remark}
  By construction, $\dot{H}^r_s (\mathbb{R}^n)$ is always a Hilbert space.
  For $r < n / 2$ there is an explicit description, see~\cite{bib:bahouri}, Propositions~1.34 and~1.35 for the non-symmetric
  case:
  \[ \dot{H}^r_s (\mathbb{R}^n) = \left\{ f \in \CS'_s(\R^n) : \hat{f} \in
     L^1_{\tmop{loc}} \text{ and } \int_{\mathbb{R}^n} | \hat{f} (z) |^2 | 2\pi z
     |^{2 r} \dd z < \infty \right\} . \]
  Here we wrote $\CS'_s(\R^n)$ for those tempered distributions $u$ with $u
  (\varphi) = u (\tilde{\varphi})$ for all $\varphi \in \CS(\R^n)$, where $\tilde \varphi$ is the symmetrization of $\varphi$.
\end{remark}

\begin{lemma}\label{lem:H1 chaos}
  For $f_n \in \CS_s (\mathbb{R}^n)$ and $n \geqslant 1$ we have
  \[ \| W_n (f_n) \|_1^2 = n! \| f_n \|_{\dot{H}^1_s (\mathbb{R}^n)}^2,
     \qquad \| W_n (f_n) \|_{- 1}^2 = n! \| f_n \|_{\dot{H}^{- 1}_s
     (\mathbb{R}^n)}^2 . \]
\end{lemma}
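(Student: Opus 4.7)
The plan is to derive both identities by explicit computation inside the $n$-th homogeneous chaos, combining the bilinear identity $\|F\|_1^2 = 2\,\E[F(u_0)(-L_0 F)(u_0)]$ from Lemma~\ref{lem:H1 norm} with the explicit action of $L_0$ on chaoses from Lemma~\ref{lem:generator-chaos}. For $f_n \in \CS_s(\R^n) \subset H^2_s(\R^n)$ we have $W_n(f_n) \in \tmop{dom}(L_0)$ and $L_0 W_n(f_n) = W_n(\Delta f_n)$, so using the isometry $\E[W_n(g) W_n(h)] = n!\,\langle g,h\rangle_{L^2_s(\R^n)}$ of multiple Wiener--It\^o integrals I obtain
\[
   \|W_n(f_n)\|_1^2 = 2\,\E[W_n(f_n) W_n(-\Delta f_n)] = 2 n!\,\langle f_n, -\Delta f_n\rangle_{L^2_s(\R^n)}.
\]
Integration by parts and Plancherel with the Fourier normalization fixed in the preamble then rewrite the right-hand side as a multiple of $n!\,\|f_n\|_{\dot H^1_s(\R^n)}^2$, giving the first identity.

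For the $\dot H^{-1}_s$ identity I would exploit the variational formulation $\|F\|_{-1}^2 = \sup_{G \in \CC}\{2\,\E[F G] - \|G\|_1^2\}$ together with the orthogonality of homogeneous chaoses. Any $G \in \CC$ has a chaos expansion $G = \sum_{k\geqslant 0} W_k(g_k)$, and only the $n$-th component contributes to $\E[W_n(f_n) G(u_0)] = n!\,\langle f_n, g_n\rangle_{L^2_s(\R^n)}$. Since $L_0$ preserves each homogeneous chaos by Corollary~\ref{cor:OU-generator}, Lemma~\ref{lem:H1 norm} gives $\|G\|_1^2 = \sum_k \|W_k(g_k)\|_1^2$, so the supremum is attained on the $n$-th chaos alone. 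Substituting the first identity (and using a density argument to reduce to $g_n \in \CS_s(\R^n)$) reformulates the problem as the quadratic variational problem
\[
   \|W_n(f_n)\|_{-1}^2 = n!\,\sup_{g_n \in \CS_s(\R^n)} \bigl\{2\,\langle f_n, g_n\rangle_{L^2_s(\R^n)} - \|g_n\|_{\dot H^1_s(\R^n)}^2\bigr\},
\]
whose unique maximizer is formally $g_n = (-\Delta)^{-1} f_n$, yielding the value $n!\,\|f_n\|_{\dot H^{-1}_s(\R^n)}^2$ by Plancherel and the very definition of $\dot H^{-1}_s(\R^n)$.

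The main points to watch are bookkeeping of constants and density. One has to confirm that $\CS_s(\R^n)$ is dense in $\dot H^1_s(\R^n)$ and in $\dot H^{-1}_s(\R^n)$ so that the chaos-level identities extend to isometric embeddings of $\dot H^1_s(\R^n)$ into $\dot{\CH}^1$ and of $\dot H^{-1}_s(\R^n)$ into $\dot{\CH}^{-1}$, and that the resolvent approximation $g_{n,\lambda} = (\lambda - L_0)^{-1}(-W_n(f_n))$ used in the proof of the Kipnis--Varadhan inequality indeed realizes the variational supremum as $\lambda \to 0$ (this is where the formal inverse $(-\Delta)^{-1} f_n$ is justified even when $f_n \notin L^2_s(\R^n)$). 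These verifications are routine in the Gaussian Hilbert space framework, but have to be performed carefully to transport the smooth-kernel computation into an identity at the level of the completion norms.
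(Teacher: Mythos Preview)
Your approach is essentially the same as the paper's: compute the $\|\cdot\|_1$ norm via Lemma~\ref{lem:H1 norm} and the action of $L_0$ on chaoses, then obtain the $\|\cdot\|_{-1}$ identity from the variational definition by reducing to the $n$-th chaos via orthogonality and using the $\dot H^1_s/\dot H^{-1}_s$ duality. The paper handles the lower bound for $\|\cdot\|_{-1}$ slightly more directly by exhibiting the explicit near-maximizer $\hat g(z) = (\widehat{f_n}(z))^* |2\pi z|^{-2}$ and approximating it by cylinder functions, rather than appealing to a resolvent limit; but this is the same idea as your formal $(-\Delta)^{-1}f_n$.
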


\begin{proof}
  For the $\dot{\CH}^1$ norm it suffices to note that
  \[ \mathbb{E}_{\mu} [W_n (f_n) (- L_0) W_n (f_n)] = n! \| \nabla f_n \|_{L^2_s(\R^n)}^2 = n! \| f_n \|_{\dot{H}^1_s
     (\mathbb{R}^n)}^2, \]
  where the last equality follow from Plancherel's formula. For the $\dot{\CH}^{-
  1}$ norm let us consider a test function $G = \sum_{m \geqslant 0} W_m (g_m) \in \CC$.
  Then
  \begin{align*}
     2\mathbb{E} [W_n (f_n)(u_0) G(u_0)] - \| G \|_1^2 & = 2 n! \langle f_n, g_n \rangle_{L^2_s (\mathbb{R}^n)} - \sum_{m \geqslant 0} m! \| g_m \|_{\dot{H}^1_s (\mathbb{R}^m)}^2 \\
     & \leqslant 2 n! \langle f_n, g_n \rangle_{L^2_s (\mathbb{R}^n)} - n! \| g_n \|_{\dot{H}^1_s (\mathbb{R}^n)}^2,
  \end{align*}
  and Plancherel's formula and then the Cauchy-Schwarz inequality give
  \begin{align*}
    \langle f_n, g_n \rangle_{L^2_s (\mathbb{R}^n)} = \int_{\mathbb{R}^n} \widehat{f_n} (z) (\hat{g}_n (z))^{\ast} \dd z & \leqslant \left( \int_{\mathbb{R}^n} | \widehat{f_n} (z) |^2 | 2 \pi z |^{- 2} \dd z \right)^{1 / 2} \left( \int_{\mathbb{R}^n} | \widehat{g_n} (z) |^2 | 2 \pi z    |^2 \dd z \right)^{1 / 2}\\
    & = \| f_n \|_{\dot{H}^{- 1}_s (\mathbb{R}^n)} \| g_n \|_{\dot{H}^1_s (\mathbb{R}^n)},
  \end{align*}
  from where we get $\| W_n (f_n) \|_{- 1}^2 \leqslant n! \| f_n \|_{\dot{H}^{- 1}_s (\mathbb{R}^n)}^2$. To see the converse inequality let $f_n \in \dot{H}^{- 1}_s
  (\mathbb{R}^n)$. Then we have for $\hat{g} (z) \assign (\widehat{f_n}
  (z))^{\ast} | 2 \pi z |^{- 2}$
  \[ 2 \int_{\mathbb{R}^n} \widehat{f_n} (z) \hat{g} (z) \dd z -
     \int_{\mathbb{R}^n} | \hat{g} (z) |^2 | 2 \pi z |^2 \dd z = 2 \| f_n
     \|_{\dot{H}^{- 1}_s}^2 - \int_{\mathbb{R}^n} |
     (\widehat{f_n} (z))^{\ast} | 2 \pi z |^{- 2} |^2 | 2 \pi z |^2 \dd z = \| f_n
     \|_{\dot{H}^{- 1}_s}^2 . \]
  Of course $W_n (g)$ may not be in $\CC$, but we can approximate it by
  functions in $\CC$ and this concludes the proof.
\end{proof}

\begin{corollary}
  \label{cor:L0-extension}We have an explicit representation of $\dot{\CH}^1$ and
  $\dot{\CH}^{- 1}$ via
  \[ \dot{\CH}^1 = \left\{ \sum_{n \geqslant 1} W_n (f_n) : f_n \in \dot{H}^1_s
     (\mathbb{R}^n), \ \sum_n n! \| f_n \|_{\dot{H}_s^1 (\mathbb{R}^n)}^2 <
     \infty \right\}, \]
  \[ \dot{\CH}^{- 1} = \left\{ \sum_{n \geqslant 1} W_n (f_n) : f_n \in \dot{H}^{-
     1}_s (\mathbb{R}^n), \ \sum_n n! \| f_n \|_{\dot{H}_s^{- 1}
     (\mathbb{R}^n)}^2 < \infty \right\} . \]
  Moreover, there is a unique extension of $L_0$ from $\tmop{dom}
  (L_0) \subset L^2 (\mu)$ to $\dot{\CH}^1$ for which $L_0$ is an isometry from
  $\dot{\CH}^1$ to $\dot{\CH}^{- 1}$.
\end{corollary}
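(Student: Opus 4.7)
The corollary follows by exploiting the orthogonality of Wiener chaoses of different orders together with Lemmas~\ref{lem:generator-chaos} and~\ref{lem:H1 chaos}, which reduce both the $\lVert\cdot\rVert_1$ and $\lVert\cdot\rVert_{-1}$ norms to weighted direct sums over chaos levels. Once these chaos representations are in hand, the description of $\dot{\CH}^1$ and $\dot{\CH}^{-1}$ and the isometry are essentially bookkeeping. Throughout I would work with a cylinder function $F \in \CC$ written as a finite expansion $F = \sum_{n\ge 0} W_n(f_n)$ with symmetric kernels $f_n$, and pass to the completion only at the end.

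\emph{Representation of $\dot{\CH}^1$.} Combining Lemma~\ref{lem:H1 norm}, the action $L_0 W_n(f_n) = W_n(\Delta f_n)$ from Lemma~\ref{lem:generator-chaos}, and the orthogonality of chaoses of different order in $L^2(\mu)$, I obtain
\[
 \|F\|_1^2 = 2\,\E\bigl[F(u_0)(-L_0 F)(u_0)\bigr] = \sum_{n\ge 1} 2\,n!\,\langle f_n,-\Delta f_n\rangle_{L^2_s(\R^n)} = \sum_{n\ge 1} n!\,\|f_n\|_{\dot{H}^1_s(\R^n)}^2,
\]
the last identity by Plancherel as in the proof of Lemma~\ref{lem:H1 chaos}. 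Since symmetric Schwartz functions are dense in $\dot{H}^1_s(\R^n)$, taking the completion of the $\lVert\cdot\rVert_1$-equivalence classes of cylinder functions identifies $\dot{\CH}^1$ with the weighted Hilbert direct sum $\bigoplus_{n\ge 1}\dot{H}^1_s(\R^n)$, giving the claimed description.

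\emph{Representation of $\dot{\CH}^{-1}$.} For a generic test $G = \sum_m W_m(g_m) \in \CC$, orthogonality decouples the variational formula chaos-by-chaos,
\[
 2\E[F(u_0)G(u_0)] - \|G\|_1^2 = \sum_{n\ge 1}\bigl(2\,n!\,\langle f_n,g_n\rangle_{L^2_s(\R^n)} - n!\,\|g_n\|_{\dot{H}^1_s(\R^n)}^2\bigr).
\]
Each summand is maximized independently by choosing $\widehat{g_n}(z) = \widehat{f_n}(z)^\ast |2\pi z|^{-2}$, exactly as at the end of Lemma~\ref{lem:H1 chaos}, producing $n!\,\|f_n\|_{\dot{H}^{-1}_s(\R^n)}^2$ (or $+\infty$ if $f_n \notin \dot{H}^{-1}_s$). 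Summing yields $\|F\|_{-1}^2 = \sum_{n\ge 1} n!\,\|f_n\|_{\dot{H}^{-1}_s(\R^n)}^2$, and completion gives the advertised description, once one uses density of symmetric Schwartz functions in $\dot{H}^{-1}_s(\R^n)$ to approximate arbitrary elements of the abstract completion by cylinder functions.

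\emph{Extension and isometry.} With both descriptions in hand I define $L_0\bigl(\sum_n W_n(f_n)\bigr) := \sum_n W_n(\Delta f_n)$ on $\dot{\CH}^1$. The Fourier identity $\widehat{\Delta f_n}(z) = -|2\pi z|^2 \widehat{f_n}(z)$ combined with Plancherel gives $\|\Delta f_n\|_{\dot{H}^{-1}_s} = \|f_n\|_{\dot{H}^1_s}$, so summing over $n$ yields $\|L_0 F\|_{-1} = \|F\|_1$ and the isometry is immediate. Uniqueness of the extension follows because $\tmop{dom}(L_0)\cap\CC$ is dense in $\dot{\CH}^1$ by construction, and on this dense subset the new $L_0$ coincides with the closed operator on $L^2(\mu)$. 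The only mildly subtle point, and the place where small care is needed, is that the zeroth chaos is quotiented out of both $\dot{\CH}^1$ and $\dot{\CH}^{-1}$ because $\|c\|_1=\|c\|_{-1}=0$ for constants; this is why the chaos sums start at $n=1$ and why the seminorms become genuine Hilbert norms on the completion.
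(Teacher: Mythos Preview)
Your proof is correct and follows essentially the same route as the paper: both rely on Lemma~\ref{lem:H1 chaos} and the chaos-orthogonality to identify $\dot{\CH}^{\pm 1}$ with weighted direct sums of $\dot H^{\pm 1}_s(\R^n)$, and both obtain the isometry from the Fourier identity $\|\Delta f_n\|_{\dot H^{-1}_s} = \|f_n\|_{\dot H^1_s}$. The paper's version is terser---it declares the space representations immediate from Lemma~\ref{lem:H1 chaos} and only spells out the isometry on $\tmop{dom}_0(L_0)$, checking the inclusion $\tmop{dom}_0(L_0)\subset\dot{\CH}^1$ via $1+|2\pi z|^4>|2\pi z|^2$---whereas you fill in the chaos decoupling of $\|\cdot\|_1$ and of the variational formula for $\|\cdot\|_{-1}$ explicitly; but there is no substantive difference in strategy.
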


\begin{proof}
  We only have to prove the statement about the extension of $L_0$. Since $1
  + |2 \pi z |^4 > | 2 \pi z |^2$ for all $z \in \mathbb{R}^n$ we have $H^2_s (\mathbb{R}^n) \subset \dot{H}_s^1
  (\mathbb{R}^n)$ and $\| f_n \|_{\dot{H}_s^1 (\mathbb{R}^n)}^2 \leqslant \| f_n \|_{H^2_s (\mathbb{R}^n)}^2$ for all $f_n \in H^2_s (\mathbb{R}^n)$. Thus,
  \[ \tmop{dom}_0 (L_0) \assign \{ F \in \tmop{dom} (L_0) : \mathbb{E} [F] =
     0 \} \subset \dot{\CH}^1 . \]
  For $F = \sum_{n \geqslant 1} W_n (f_n) \in \tmop{dom}_0 (L_0)$ we have
  \[ \| L_0 F \|_{- 1}^2 = \sum_{n \geqslant 1} n! \| \Delta f_n
     \|_{\dot{H}_s^{- 1} (\mathbb{R}^n)}^2, \]
  and
  \[ \| \Delta f_n \|_{\dot{H}_s^{- 1} (\mathbb{R}^n)}^2 =
     \int_{\mathbb{R}^n} (|2 \pi z |^2 | \widehat{f_n} (z) |)^2 | 2 \pi z |^{- 2} \dd
     z = \| f_n \|_{H^1_s (\mathbb{R}^n)}^2, \]
  which proves that $\| L_0 F \|_{- 1}^2 = \| F \|^2_1$ and thus that $L_0$ is
  an isometry from $\tmop{dom}_0 (L_0) \subset \dot{\CH}^1$ to $\dot{\CH}^{- 1}$ which can
  be uniquely extended to all of $\dot{\CH}^1$ because $\tmop{dom}_0 (L_0)$ is dense in $\dot{\CH}^1$.
\end{proof}

\subsection{The Burgers and KPZ nonlinearity}\label{sec:nonlinearity}

With the tools we have at hand, it is now straightforward to construct the KPZ nonlinearity (and in particular the Burgers nonlinearity) for all controlled processes.

\begin{proposition}\label{prop:kpz drift}
  Let $T>0$, $p \geqslant 1$, let $\rho, \varphi \in \CS(\R)$, and let $(u, \mathcal{A}) \in \CQ$. Then
  \begin{equation}
    \label{eq:burgers-nonlin-1} \mathbb{E} \left[ \sup_{t \leqslant T} \left|
    \int_0^t [(u_s \ast \rho)^2 - \| \rho \|_{L^2(\R)}^2] (\varphi) \dd s
    \right|^p \right] \lesssim (T^{p / 2} \vee T^p) (\| \rho \|_{L^1(\R)}^2 \|
    \varphi \|_{H^{- 1 / 2}(\R)})^p .
  \end{equation}
  Therefore, the integral $\int_0^{\cdummy} [(u_s \ast \rho)^2 - \| \rho
  \|_{L^2(\R)}] (\varphi) \dd s$ is well defined also for $\rho \in L^1(\R) \cap L^2(\R)$. If
  $(\rho^N) \subset L^1(\R) \cap L^2(\R)$ is an approximate identity (i.e. $\sup_N \| \rho^N
  \|_{L^1(\R)} < \infty$, $\widehat{\rho^N} (0) = 1$ for all $N$, $\lim_{N \rightarrow
  \infty} \widehat{\rho^N} (x) = 1$ for all $x$), then there exists a process
  $\int_0^{\cdummy} u_s^{\diamond 2} \dd s \in C \left( \R_+, \CS'(\R)
  \right)$, given by
  \[ \left( \int_0^t u_s^{\diamond 2} \dd s \right) (\varphi) \assign
     \lim_{n \rightarrow \infty} \int_0^t [(u_s \ast \rho^N)^2 - \| \rho^N
     \|_{L^2(\R)}^2] (\varphi) \dd s, \qquad t \geqslant 0, \varphi \in \CS(\R), \]
  where the convergence is in $L^p(\P)$ uniformly on compacts and as the notation suggests the limit does not depend on the approximate
  identity $(\rho^N)$. Furthermore, if $\rho \in L^1(\R)$ is such that $x \rho \in L^1(\R)$ and $\rho^N := N \rho(N \cdot)$, then
  \begin{align}\label{eq:KPZ-nonlin-bound} \nonumber
    &\mathbb{E} \left[ \sup_{t \leqslant T} \left| \int_0^t u_s^{\diamond 2}
    \dd s (\varphi) - \int_0^t [(u_s \ast \rho^N)^2 - \| \rho^N \|_{L^2(\R)}^2]
    (\varphi) \dd s \right|^p \right] \\
    &\hspace{100pt}\lesssim (T^{p / 2} \vee T^p) N^{- p / 2} \{ (1 + \| \rho \|_{L^1(\R)})^{3 / 2} \| x \rho \|^{1 / 2}_{L^1(\R)} \| \varphi \|_{L^2(\R)} \}^p .
  \end{align}
\end{proposition}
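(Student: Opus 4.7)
The plan is to recognize the integrand as a second-chaos functional of $u_s$, reduce the bound to a Fourier computation of the $\dot{\CH}^{-1}$ norm, and then invoke the Kipnis--Varadhan inequality of Corollary~\ref{cor:KV} together with Gaussian hypercontractivity (Remark~\ref{rmk:ito with finite chaos}).

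Set $F_\rho(u) \assign \int_\R [(u \ast \rho)^2(x) - \|\rho\|_{L^2(\R)}^2]\varphi(x)\,\dd x$. Since $(u_0 \ast \rho)(x) = W_1(\rho(x-\cdot))$ and $W_1(f)^2 = W_2(f^{\otimes 2}) + \|f\|_{L^2(\R)}^2$, integration in $x$ yields $F_\rho(u_0) = W_2(g_\rho) \in \mathcal{H}_2$ with symmetric kernel $g_\rho(y_1,y_2) = \int_\R \varphi(x)\rho(x-y_1)\rho(x-y_2)\,\dd x$ and Fourier transform $\widehat{g_\rho}(\xi_1,\xi_2) = \hat\varphi(-\xi_1-\xi_2)\hat\rho(\xi_1)\hat\rho(\xi_2)$. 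Lemma~\ref{lem:H1 chaos} then gives $\|F_\rho\|_{-1}^2 = 2\|g_\rho\|_{\dot H^{-1}_s(\R^2)}^2$, and using $|\hat\rho| \le \|\rho\|_{L^1(\R)}$ and the change of variables $\xi = -(\xi_1+\xi_2)$, $\eta = \xi_1 - \xi_2$ (so $|\xi_1|^2 + |\xi_2|^2 = (\xi^2+\eta^2)/2$), the transverse integral $\int_\R (\xi^2+\eta^2)^{-1}\,\dd\eta = \pi/|\xi|$ evaluates explicitly, giving
\[
\|F_\rho\|_{-1}^2 \lesssim \|\rho\|_{L^1(\R)}^4 \|\varphi\|_{H^{-1/2}(\R)}^2.
\]

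For $p = 2$ the bound~\eqref{eq:burgers-nonlin-1} now follows from Corollary~\ref{cor:KV}. For general $p \ge 1$ I would follow the Kipnis--Varadhan proof and estimate separately the forward/backward martingale part and the remainder $\int_0^\cdot \lambda H_\lambda(u_s)\,\dd s$ via Burkholder--Davis--Gundy; since $F_\rho \in \mathcal{H}_2$ forces both $H_\lambda$ and its energy density $\mathcal{E}(H_\lambda)$ to live in a fixed finite chaos, Gaussian hypercontractivity as in Remark~\ref{rmk:ito with finite chaos} lets one replace the $L^{p/2}$ norm by the $L^1$ norm to reach the stated bound. Extension of~\eqref{eq:burgers-nonlin-1} to $\rho \in L^1(\R) \cap L^2(\R)$ is by density, using the same bound on $F_\rho - F_{\tilde\rho}$. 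Existence of $\int_0^\cdot u_s^{\diamond 2}\,\dd s$ for any approximate identity $(\rho^N)$ is then reduced to showing that $(g_{\rho^N})$ is Cauchy in $\dot H^{-1}_s(\R^2)$, which follows from dominated convergence once one observes that the Fourier integrand is pointwise dominated by $(\sup_N \|\rho^N\|_{L^1(\R)})^4|\hat\varphi(-\xi_1-\xi_2)|^2|2\pi(\xi_1,\xi_2)|^{-2}$ (integrable by the previous computation) and converges to zero pointwise because $\widehat{\rho^N} \to 1$; continuity in $t$ of the limit follows from the $p$-variation statement of Corollary~\ref{cor:KV}.

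For the rate~\eqref{eq:KPZ-nonlin-bound} with $\rho^N = N\rho(N\,\cdot)$ and thus $\widehat{\rho^N}(\xi) = \hat\rho(\xi/N)$, I would combine the two pointwise estimates $|1 - \hat\rho(\xi/N)| \le 1 + \|\rho\|_{L^1(\R)}$ and $|1 - \hat\rho(\xi/N)| \le 2\pi \|x\rho\|_{L^1(\R)} |\xi|/N$ (the latter from $\hat\rho(0) = 1$ and $|e^{2\pi i \xi y/N}-1| \le 2\pi|\xi||y|/N$) via $\min(A,B) \le \sqrt{AB}$ to obtain $|1-\hat\rho(\xi/N)|^2 \lesssim (1 + \|\rho\|_{L^1(\R)})\|x\rho\|_{L^1(\R)} |\xi|/N$. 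The elementary identity $|1-ab| \le |1-a| + |a||1-b|$ together with $|\hat\rho(\xi/N)| \le \|\rho\|_{L^1(\R)}$ then yields
\[
|1 - \widehat{\rho^N}(\xi_1)\widehat{\rho^N}(\xi_2)|^2 \lesssim (1+\|\rho\|_{L^1(\R)})^3 \|x\rho\|_{L^1(\R)}\, N^{-1} (|\xi_1| + |\xi_2|).
\]
What remains is to bound $\int |\hat\varphi(-\xi_1-\xi_2)|^2(|\xi_1|+|\xi_2|)(|\xi_1|^2+|\xi_2|^2)^{-1}\,\dd\xi_1\,\dd\xi_2$ by $\|\varphi\|_{L^2(\R)}^2$; naively the transverse integral after the above change of variables has a logarithmic divergence, so the main technical obstacle is a careful domain decomposition (using the interpolated bound inside $\{|\xi_i| \le N\}$ and the trivial bound together with the $|\xi|^{-2}$ weight outside $\{|\xi_i| \le N\}$) and bookkeeping of the $\|\rho\|_{L^1}$ and $\|x\rho\|_{L^1}$ powers so as to land on exactly the constants stated in~\eqref{eq:KPZ-nonlin-bound}. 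Once this $\dot{\CH}^{-1}$ bound is established, the Kipnis--Varadhan plus hypercontractivity argument of the previous paragraph upgrades it to the claimed $L^p$ estimate.
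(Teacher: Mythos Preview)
Your approach has a genuine gap in the first bound~\eqref{eq:burgers-nonlin-1}: the quantity $\|F_\rho\|_{-1}^2 = 2\|g_\rho\|_{\dot H^{-1}_s(\R^2)}^2$ is in general \emph{infinite}. Indeed your own computation gives, after the transverse integration,
\[
\|g_\rho\|_{\dot H^{-1}_s(\R^2)}^2 \;\lesssim\; \|\rho\|_{L^1}^4 \int_\R \frac{|\hat\varphi(\xi)|^2}{|\xi|}\,\dd\xi,
\]
which is the \emph{homogeneous} $\dot H^{-1/2}$ norm and diverges at $\xi=0$ whenever $\hat\varphi(0)=\int_\R\varphi\neq 0$. (Equivalently, $\widehat{g_\rho}(0,0)=\hat\varphi(0)\hat\rho(0)^2\neq 0$ and $|2\pi\xi|^{-2}$ is not locally integrable at the origin in $\R^2$.) So Corollary~\ref{cor:KV} gives no information here. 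This is precisely the ``infrared problem'' the paper alludes to; the remedy is not the Poisson equation but the \emph{resolvent} equation $(1-L_0)H=F_\rho$, which gives $\hat h(\xi)=\widehat{g_\rho}(\xi)/(1+|2\pi\xi|^2)$. One then splits $\int_0^t F_\rho(u_s)\,\dd s=\int_0^t H(u_s)\,\dd s-\int_0^t L_0 H(u_s)\,\dd s$, bounds the first piece trivially by $T^p\|h\|_{L^2}^p$ (hypercontractivity), and bounds the second via the It\^o trick by $T^{p/2}\|h\|_{\dot H^1_s}^p$. The extra factor $(1+|2\pi\xi|^2)^{-1}$ tames the origin and produces the \emph{inhomogeneous} $H^{-1/2}$ norm on the right-hand side. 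Your dominated convergence argument for the approximate identity fails for the same reason: the proposed majorant $|\hat\varphi(\xi_1+\xi_2)|^2|2\pi\xi|^{-2}$ is not integrable, whereas the resolvent-based integrands are.

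For the rate~\eqref{eq:KPZ-nonlin-bound} your plan is sound, since the difference kernel has Fourier transform $(1-\widehat{\rho^N}(\xi_1)\widehat{\rho^N}(\xi_2))\hat\varphi(\xi_1+\xi_2)$ vanishing at the origin, so there is no infrared obstruction and one may (as the paper does) solve the Poisson equation directly. The logarithmic divergence you encounter is an artifact of first taking $\min(A,B)\le\sqrt{AB}$ and then trying to integrate $(|\xi_1|+|\xi_2|)/|\xi|^2$. A cleaner route is to split
\[
1-\widehat{\rho^N}(\xi_1)\widehat{\rho^N}(\xi_2)=\bigl(1-\widehat{\rho^N}(\xi_1)\bigr)+\widehat{\rho^N}(\xi_1)\bigl(1-\widehat{\rho^N}(\xi_2)\bigr),
\]
bound $|\xi|^{-2}\le|\xi_j|^{-2}$ with $j$ matching the variable carrying the factor $1-\widehat{\rho^N}$, integrate out the other variable against $|\hat\varphi|^2$ to produce $\|\varphi\|_{L^2}^2$, and only then use the $\min$ trick on the remaining one-dimensional integral $\int_\R \min\{a|\xi_j|,b\}^2|\xi_j|^{-2}\,\dd\xi_j\simeq ab$ with $a=\|x\rho^N\|_{L^1}$ and $b=1+\|\rho^N\|_{L^1}$. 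This lands exactly on the constants in~\eqref{eq:KPZ-nonlin-bound}.
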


\begin{proof}
  Let us set
  \begin{align*}
    F (u) & \assign (u \ast \rho)^2 (\varphi) - \| \rho \|_{L^2 (\mathbb{R})}^2
    \int \varphi (x) \dd x = \int_{\mathbb{R}} (u (\rho (x - \cdummy)) u
    (\rho (x - \cdummy)) - \| \rho \|_{L^2 (\mathbb{R})}^2) \varphi (x)
    \dd x\\
    &= \int_{\mathbb{R}} W_2 (\rho (x - \cdummy)^{\otimes 2})(u) \varphi (x)
    \dd x = W_2 \left( \int_{\mathbb{R}} \rho (x - \cdummy)^{\otimes 2}
    \varphi (x) \dd x \right) (u),
  \end{align*}
  where in the last step we applied the stochastic Fubini theorem. Due to infrared problems it seems
  impossible to directly solve the Poisson equation $L_0 G = F$, so instead we
  consider $H$ with $(1 - L_0) H = F$ which means that $H = W_2 (h)$ for
  \[ (1 - \Delta) h (y_1, y_2) = \int_{\mathbb{R}} \rho (x - y_1) \rho (x -
     y_2) \varphi (x) \dd x, \]
  or in Fourier variables
  \[ \hat{h} (\xi_1, \xi_2) = \frac{\hat{\rho} (- \xi_1) \hat{\rho} (- \xi_2)
     \hat{\varphi} (\xi_1 + \xi_2)}{1 + | 2 \pi \xi |^2} . \]
  Then we have
  \begin{align}\label{eq:kpz drift pr1} \nonumber
    \mathbb{E} \left[ \sup_{t \leqslant T} \left| \int_0^t F (u_s) \dd s
    \right|^p \right] & =\mathbb{E} \left[ \sup_{t \leqslant T} \left| \int_0^t
    (1 - L_0) H (u_s) \dd s \right|^p \right]\\
    & \lesssim \mathbb{E} \left[ \sup_{t \leqslant T} \left| \int_0^t H (u_s)
    \dd s \right|^p \right] +\mathbb{E} \left[ \sup_{t \leqslant T} \left|
    \int_0^t L_0 H (u_s) \dd s \right|^p \right] .
  \end{align}
  For the first term on the right hand side we further apply Gaussian hypercontractivity to estimate
  \begin{align*}
    \mathbb{E} \left[ \sup_{t \leqslant T} \left| \int_0^t H (u_s) \dd s  \right|^p \right] & \leqslant T^p \mathbb{E} [| H (u_0) |^p] \simeq T^p  \mathbb{E} [| H (u_0) |^2]^{p / 2} \simeq T^p \| h \|_{L^2(\R)}^p\\
    & \simeq T^p \left( \int_{\mathbb{R}^2} \frac{| \hat{\rho} (- \xi_1) |^2 | \hat{\rho} (- \xi_2) |^2 | \hat{\varphi} (\xi_1 + \xi_2) |^2}{(1 + |2 \pi \xi  |^2)^2} \dd \xi \right)^{p / 2} \\
    & \leqslant T^p \| \rho \|_{L^1(\R)}^{2 p} \left( \int_{\mathbb{R}^2} \frac{| \hat{\varphi} (\xi_1 + \xi_2) |^2}{(1 + | \xi |^2)^2} \dd \xi \right)^{p / 2} \\
    & \lesssim T^p \| \rho \|_{L^1(\R)}^{2 p} \left( \int_{\mathbb{R}} \frac{| \hat{\varphi} (\xi_1) |^2}{(1 + | \xi_1 |^2)^{3  / 2}} \dd \xi_1 \right)^{p / 2} \simeq T^p \| \rho \|_{L^1(\R)}^{2 p} \| \varphi \|_{H^{- 3 / 2}(\R)}^p,
  \end{align*}
  where we used the completion of the square to compute
  \begin{align*}
    \int_{\mathbb{R}} \frac{1}{(1 + | \xi_1 - \xi_2 |^2 + | \xi_2 |^2)^2} \dd \xi_2 & =  \int_{\mathbb{R}} \frac{1}{(1 + 2 | \xi_2 -  \xi_1 / 2 |^2 + | \xi_1 |^2 / 2)^2} \dd \xi_2 \\
    &= \int_{\mathbb{R}} \frac{1}{(1 + 2 | \xi_2 |^2 + | \xi_1 |^2 / 2)^2} \dd \xi_2 \lesssim \frac{1}{(1 + | \xi_1 |^2)^{3  / 2}}.
  \end{align*}
  By Proposition~\ref{prop:ito-trick} together with Remark~\ref{rmk:ito with finite chaos} and Lemma~\ref{lem:H1 chaos} the second term on the right hand side of~\eqref{eq:kpz drift pr1} is bounded by
  \begin{align*}
    \mathbb{E} \left[ \sup_{t \leqslant T} \left| \int_0^t L_0 H (u_s) \dd
    s \right|^p \right] & \lesssim T^{p / 2} \| h \|_{\dot{H}^1_s (\mathbb{R}^2)}^p = T^{p / 2} \left( \int_{\mathbb{R}^2} \left| \frac{\hat{\rho} (- \xi_1) \hat{\rho} (- \xi_2) \hat{\varphi} (\xi_1 + \xi_2)}{1 + | 2 \pi \xi |^2} \right|^2 |2 \pi \xi |^2 \dd \xi \right)^{p / 2}\\
    & \leqslant T^{p / 2} \| \rho \|_{L^1(\R)}^{2 p} \left( \int_{\mathbb{R}^2} \frac{| \hat{\varphi} (\xi_1 + \xi_2) |^2}{1 + | \xi |^2} \dd \xi \right)^{p / 2} \\
    & \lesssim T^{p / 2} \| \rho \|_{L^1(\R)}^{2 p} \left( \int_{\mathbb{R}} \frac{| \hat{\varphi} (\xi_1) |^2}{(1 + | \xi_1 |^2)^{1  / 2}} \dd \xi_1 \right)^{p / 2} \simeq T^{p / 2} \| \rho \|_{L^1(\R)}^{2 p} \| \varphi \|_{H^{- 1 / 2}(\R)}^p .
  \end{align*}
  Since $\| \varphi \|_{H^{- 1 / 2}} \geqslant \| \varphi \|_{H^{- 3 / 2}}$, the claimed estimate~(\ref{eq:burgers-nonlin-1}) follows.
  
  If $(\rho^N)$ is an approximate identity, then $\widehat{\rho^N} (\xi_1)
  \widehat{\rho^N} (\xi_2)$ converges to $1$ for all $\xi_1, \xi_2$, and since $\|\rho^N \|_{L^1}$ and thus $\| \widehat{\rho^N}\|_{L^\infty}$ is uniformly bounded the convergence of $\int_0^t [(u_s
  \ast \rho^N)^2 - \| \rho^N \|_{L^2(\R)}^2] (\varphi) \dd s$ follows from the above arguments together with
  dominated convergence theorem. If $(\tilde{\rho}^N)$ is another approximate
  identity, then $| \widehat{\tilde{\rho}^N} (x) - \widehat{\rho^N} (x) |$
  converges pointwise to 0 and is uniformly bounded, so the independence of
  the limit from the approximate identity follows once more from the dominated
  convergence theorem.
  
  Finally, for $F^N (u) \assign u_s^{\diamond 2} (\varphi) - [(u \ast
  \rho^N)^2 - \| \rho^N \|_{L^2(\R)}^2] (\varphi)$ we can solve the Poisson
  equation directly (strictly speaking we would have to first approximate $ u_s^{\diamond 2}$ by $(u_s \ast \rho^M)^2 -  \| \rho^M \|_{L^2(\R)}^2$, but for simplicity we argue already in the limit $M \to \infty$). We get $F^N = L_0 H^N$ for $H^N = W_2 (h^N)$ with
  \[ \widehat{h^N} (\xi_1, \xi_2) = \frac{(1 - \widehat{\rho^N} (- \xi_1))
     \hat{\varphi} (\xi_1 + \xi_2)}{| 2 \pi \xi |^2} + \frac{\widehat{\rho^N} (- \xi_1)
     (1 - \widehat{\rho^N} (- \xi_2)) \hat{\varphi} (\xi_1 + \xi_2)}{|2 \pi \xi |^2} .
  \]
  Let us concentrate on the first term, the second one being essentially of
  the same form (start by bounding $| \widehat{\rho^N} (- \xi_1) | \leqslant \|
  \rho^N \|_{L^1} \leqslant (1 + \| \rho^N \|_{L^1})$ in that case):
  \begin{align*}
    &\left\| \CF^{-1}\left(\frac{(1 - \widehat{\rho^N} (- \xi_1)) \hat{\varphi} (\xi_1 +  \xi_2)}{|2 \pi \xi |^2}\right) \right\|_{\dot{H}^1_s (\mathbb{R}^2)}^2  \leqslant  \left\| \CF^{-1}\left( \frac{(1 - \widehat{\rho^N} (- \xi_1)) \hat{\varphi} (\xi_1 + \xi_2)}{|2 \pi \xi |^2} \right) \right\|_{\dot{H}^1 (\mathbb{R}^2)}^2\\
    &\hspace{20pt} = \int_{\mathbb{R}^2} \left| \frac{(1 - \widehat{\rho^N} (- \xi_1))
    \hat{\varphi} (\xi_1 + \xi_2)}{|2 \pi \xi |^2} \right|^2 | 2 \pi \xi |^2 \dd \xi \\
    &\hspace{20pt}\leqslant \int_{\mathbb{R}^2} \min \{ \| \partial_x \widehat{\rho^N}
    \|_{L^{\infty}} | \xi_1 |, (1 + \| \rho^N \|_{L^1}) \}^2 \frac{|
    \hat{\varphi} (\xi_1 + \xi_2) |^2}{| \xi_1 |^2} \dd \xi\\
    &\hspace{20pt}\leqslant 2 \int^{(1 + \| \rho^N \|_{L^1}) \| \partial_x  \widehat{\rho^N} \|^{- 1}_{L^{\infty}}}_0 \| \partial_x \widehat{\rho^N}
    \|_{L^{\infty}}^2 \dd \xi_1  \| \varphi
    \|_{L^2}^2 \\
    &\hspace{20pt}\quad +2 \int_{(1 + \| \rho^N \|_{L^1}) \|
    \partial_x \hat{\rho}_n \|^{- 1}_{L^{\infty}}}^{\infty} \frac{(1 + \|
    \rho^N \|_{L^1})^2}{| \xi_1 |^2} \dd \xi_1  \| \varphi
    \|_{L^2}^2\\
    &\hspace{20pt} \simeq (1 + \| \rho^N \|_{L^1}) \| \partial_x \widehat{\rho^N} \|_{L^{\infty}}
    \| \varphi \|_{L^2(\R)}^2 \lesssim (1 + \| \rho^N \|_{L^1}) \| x \rho^N \|_{L^1} \| \varphi \|_{L^2}^2 .
  \end{align*}
  Now $\|\rho^N\|_{L^1} = \|\rho\|_{L^1}$ and $\| x \rho^N\|_{L^1} = N^{-1} \|x \rho\|_{L^1}$, and therefore our claim follows from Proposition~\ref{prop:ito-trick} together with Remark~\ref{rmk:ito with finite chaos} and Lemma~\ref{lem:H1 chaos}.
\end{proof}

Proposition~\ref{prop:burgers drift} about the Burgers drift follows by setting
\[
   \int_0^t \partial_x u_s^2 \dd s (\varphi) := \int_0^t u_s^{\diamond 2} \dd s (-\partial_x \varphi).
\]

\begin{remark}
  For the Ornstein--Uhlenbeck process $(X,0) \in \CQ$ one can check that the process
  $\int_0^{\cdummy} X_s^{\diamond 2} \dd s$ has regularity $C([0,T], C^{1 -}_{\mathrm{loc}}(\R))$. But given the bounds of Proposition~\ref{prop:kpz drift} we cannot even evaluate $\int_0^{\cdummy} X_s^{\diamond 2} \dd s$
  in a point, because the Dirac delta just fails to be in $H^{- 1 / 2}(\R)$. The reason is that the martingale argument on which our proof is based gives us at least regularity $1/2-$ in time, and this prevents us from getting better space regularity. On the other hand we are able to increase the time regularity by an
  interpolation argument as shown in the following corollary which will be useful for controlling certain Young integrals below.
\end{remark}

\begin{corollary}\label{cor:kpz drift hoelder}
   For all $\varphi \in \CS(\R)$ and all $T > 0$,
  the process $\left( \left( \int_0^{\cdummy} u_s^{\diamond 2} \dd s
  \right) (\varphi) \right)_{t \in [0, T]}$ is almost surely in $C^{3 / 4 -}
  ([0, T], \mathbb{R})$ and we have for all $\rho \in \CS(\R)$ with $\hat{\rho}
  (0) = 1$ and for all $\alpha < 3 / 4$
  \[ \lim_{n \rightarrow \infty} \left\| \int_0^{\cdummy} u_s^{\diamond 2}
     \dd s (\varphi) - \int_0^{\cdummy} [(u_s \ast \rho^N)^2 - \| \rho^N
     \|_{L^2(\R)}^2] (\varphi) \dd s \right\|_{C^{\alpha} ([0, T],
     \mathbb{R})} = 0, \]
  where the convergence is in $L^p$ for all $p > 0$ and we write again $\rho^N \assign N \rho(N \cdot)$.
\end{corollary}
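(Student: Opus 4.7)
Write $Y_t \assign \bigl(\int_0^t u_s^{\diamond 2} \dd s\bigr)(\varphi)$ and $Y_t^N \assign \int_0^t [(u_s \ast \rho^N)^2 - \|\rho^N\|_{L^2(\R)}^2](\varphi) \dd s$. The strategy is to combine Kolmogorov's continuity criterion with an interpolation between two complementary increment estimates: the approximation bound from Proposition~\ref{prop:kpz drift}, which is small in $N$ but only of order $(t-s)^{1/2}$ in time, and a direct hypercontractive bound on $Y^N$, which is of optimal order $(t-s)$ in time but grows like $N^{1/2}$.

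For the approximation estimate, the time-shifted triple $\tilde u_r \assign u_{s+r}$, $\tilde{\mathcal A}_r \assign \mathcal A_{s+r} - \mathcal A_s$, $\tilde W_r \assign W_{s+r} - W_s$ lies again in $\CQ$ for any $s \geqslant 0$ (the equation and time-reversal conditions restrict cleanly to forward shifts), so Proposition~\ref{prop:kpz drift} applied on $[0,t-s]$ gives
\[
   \E\bigl[\bigl|(Y_t - Y_s) - (Y_t^N - Y_s^N)\bigr|^p\bigr] \lesssim (t-s)^{p/2} N^{-p/2}
\]
for $0 \leqslant t-s \leqslant 1$. For the direct estimate, the random variable $X^N(u) \assign [(u \ast \rho^N)^2 - \|\rho^N\|_{L^2(\R)}^2](\varphi)$ lies in the second Wiener chaos and a short Fourier computation using $\|\rho^N\|_{L^1(\R)} = \|\rho\|_{L^1(\R)}$ and $\|\rho^N\|_{L^2(\R)}^2 = N \|\rho\|_{L^2(\R)}^2$ yields $\E[|X^N(u_0)|^2] \lesssim N$; then Jensen's inequality, marginal stationarity of $u$ and Gaussian hypercontractivity in a fixed chaos deliver
\[
   \E\bigl[|Y_t^N - Y_s^N|^p\bigr] \leqslant (t-s)^{p-1} \int_s^t \E\bigl[|X^N(u_r)|^p\bigr] \dd r \lesssim (t-s)^p N^{p/2}.
\]
A triangle inequality plus the choice $N \sim (t-s)^{-1/2}$ then yields $\E[|Y_t - Y_s|^p] \lesssim (t-s)^{3p/4}$, so Kolmogorov's criterion gives $Y \in C^{3/4 - 1/p}([0,T])$ almost surely for every $p$, and hence $Y \in C^{3/4-}([0,T])$ a.s.

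Setting $Z^N \assign Y - Y^N$, the approximation bound gives $\E[|Z_t^N - Z_s^N|^p] \lesssim (t-s)^{p/2} N^{-p/2}$, so Kolmogorov produces $\E[\|Z^N\|_{C^{\alpha_1}([0,T])}^p] \lesssim N^{-p/2}$ for every $\alpha_1 < 1/2 - 1/p$. To upgrade to exponents $\alpha$ close to $3/4$ I need, in addition, a Hölder bound of order close to $3/4$ on $Y^N$ (and hence on $Z^N$) that is uniform in $N$. This follows by a case split for the increment of $Y^N$: when $N \geqslant (t-s)^{-1/2}$ the approximation error $(t-s)^{1/2} N^{-1/2}$ is itself at most $(t-s)^{3/4}$, so triangle against $\E[|Y_t - Y_s|^p]^{1/p} \lesssim (t-s)^{3/4}$ closes the loop, while for $N \leqslant (t-s)^{-1/2}$ the direct hypercontractive bound $(t-s) N^{1/2}$ is already bounded by $(t-s)^{3/4}$. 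Thus $\E[|Y_t^N - Y_s^N|^p]^{1/p} \lesssim (t-s)^{3/4}$ uniformly in $N$, and Kolmogorov gives $\E[\|Z^N\|_{C^{\alpha_2}([0,T])}^p]$ bounded uniformly in $N$ for every $\alpha_2 < 3/4 - 1/p$. Combining this bound with $\E[\sup_{t \leqslant T}|Z_t^N|^p] \lesssim N^{-p/2}$ from Proposition~\ref{prop:kpz drift}, the elementary interpolation $\|f\|_{C^\alpha([0,T])} \leqslant C \|f\|_{L^\infty([0,T])}^{1-\alpha/\alpha_2}\|f\|_{C^{\alpha_2}([0,T])}^{\alpha/\alpha_2}$ together with Hölder's inequality on the expectation produces $\E[\|Z^N\|_{C^\alpha([0,T])}^p] \to 0$ for every $\alpha < 3/4 - 1/p$, and letting $p$ tend to infinity gives the claim for every $\alpha < 3/4$. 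The delicate step here is the uniform-in-$N$ Hölder estimate of order $3/4-$ on $Y^N$, since the naive triangle bound combined only with the approximation estimate gives a uniform $1/2-$ regularity at best; one really needs to use both the already-proved regularity of $Y$ and the direct hypercontractive bound on $Y^N$ through the case split above.
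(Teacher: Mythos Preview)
Your proof is correct and follows essentially the same strategy as the paper: both combine the approximation bound $\E[|Z^N_t-Z^N_s|^p]\lesssim |t-s|^{p/2}N^{-p/2}$ from Proposition~\ref{prop:kpz drift} with the direct hypercontractive bound $\E[|Y^N_t-Y^N_s|^p]\lesssim |t-s|^p N^{p/2}$, balance at $N\simeq |t-s|^{-1/2}$ to obtain the $3/4-$ regularity of $Y$, and then use a case split on $N$ versus $|t-s|^{-1/2}$ together with interpolation to get convergence in $C^\alpha$ for $\alpha<3/4$. The only organisational difference is that the paper interpolates at the level of increment moments (introducing a parameter $\lambda\in[0,1]$ and writing $\E[|Z^N_t-Z^N_s|^p]\lesssim |t-s|^{p(1/2+(1-\lambda)/4)}N^{-\lambda p/2}$ before a single application of Kolmogorov), whereas you first apply Kolmogorov to obtain a uniform-in-$N$ bound on $\E[\|Z^N\|_{C^{\alpha_2}}^p]$ and then interpolate the H\"older norms against the $L^\infty$ bound; both routes are standard and equivalent.
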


\begin{proof}
  Proposition~\ref{prop:kpz drift} yields for all $N$, $0 \leqslant s < t$ and $p \in
  [1, \infty)$
  \begin{equation}
    \label{eq:kpz drift hoelder pr1} \mathbb{E} \left[ \left| \int_s^t
    u_s^{\diamond 2} \dd s (\varphi) - \int_s^t [(u_s \ast \rho^N)^2 - \|
    \rho^N \|_{L^2(\R)}^2] (\varphi) \dd s \right|^p \right] \lesssim_{\rho,
    \varphi} | t - s |^{p / 2} N^{- p / 2},
  \end{equation}
  and a direct estimate gives
  \begin{equation}
    \label{eq:kpz drift hoelder pr2} \mathbb{E} \left[ \left| \int_s^t
    [(u_s \ast \rho^N)^2 - \| \rho^N \|_{L^2(\R)}^2] (\varphi) \dd s \right|^p
    \right] \lesssim | t - s |^p \mathbb{E} [| [(u_0 \ast \rho^N)^2 - \|
    \rho^N \|_{L^2}^2] (\varphi) |^2]^{p / 2} .
  \end{equation}
  The expectation on the right hand side is
  \begin{align*}
    &\mathbb{E} [| [(u_0 \ast \rho^N)^2 - \| \rho^N \|_{L^2}^2] (\varphi) |^2]
     =\mathbb{E} \left[ \left| \int_{\mathbb{R}} W_2 (\rho^N (x -
    \cdummy)^{\otimes 2})(u_0) \varphi (x) \dd x \right|^2 \right]\\
    &\hspace{70pt} = \int_{\mathbb{R}} \dd x \int_{\mathbb{R}} \dd x' \mathbb{E}
    [W_2 (\rho^N (x - \cdummy)^{\otimes 2})(u_0) W_2 (\rho^N (x' -
    \cdummy)^{\otimes 2})(u_0) ] \varphi (x) \varphi (x')\\
    &\hspace{70pt} \simeq \int_{\mathbb{R}} \dd x \int_{\mathbb{R}} \dd x'
    \int_{\mathbb{R}^2} \dd y_1 \dd y_2 \rho^N (x - y_1) \rho^N (x -
    y_2) \rho^N (x' - y_1) \rho^N (x' - y_2) \varphi (x) \varphi (x')\\
    &\hspace{70pt} = \int_{\mathbb{R}} \dd x \int_{\mathbb{R}} \dd x' |(\rho^N \ast \rho^N) (x -x')|^2 \varphi (x) \varphi (x') \leqslant \| (\rho^N \ast \rho^N)^2 \ast \varphi \|_{L^2} \| \varphi \|_{L^2} \\
    &\hspace{70pt} \leqslant  \| (\rho^N \ast \rho^N)^2 \|_{L^1} \| \varphi \|_{L^2}^2 =  \| (\rho \ast \rho)^N\|_{L^2}^2 \| \varphi \|_{L^2}^2 = N \| \rho \ast \rho \|_{L^2}^2 \| \varphi \|_{L^2}^2.
  \end{align*}
  Plugging this into~(\ref{eq:kpz drift hoelder pr2}) we get for all $N$
  and $0 \leqslant s < t$
  \begin{equation}
    \label{eq:kpz drift hoelder pr3} \mathbb{E} \left[ \left| \int_s^t
    [(u_s \ast \rho^N)^2 - \| \rho^N \|_{L^2}^2] (\varphi) \dd s \right|^p
    \right] \lesssim_{\rho, \varphi} | t - s |^p N^{p / 2} .
  \end{equation}
  Now if $0 \leqslant s < t \leqslant s + 1$ we apply~(\ref{eq:kpz drift
  hoelder pr1}) and~(\ref{eq:kpz drift hoelder pr3}) with $N \simeq | t -
  s |^{- 1 / 2}$ and get $\mathbb{E} \left[ \left| \int_s^t u_s^{\diamond 2}
  \dd s (\varphi) \right|^p \right] \lesssim | t - s |^{3 p / 4}$, from
  where Kolmogorov's continuity criterion yields the local
  H{\"o}lder-continuity of order $3 / 4 -$. Moreover, for $N > |
  t - s |^{- 1 / 2}$ equation~(\ref{eq:kpz drift hoelder pr1}) gives for
  all $\lambda \in [0, 1]$
  \[ \mathbb{E} \left[ \left| \int_s^t u_s^{\diamond 2} \dd s (\varphi) -
     \int_s^t [(u_s \ast \rho^N)^2 - \| \rho^N \|_{L^2}^2] (\varphi) \dd s
     \right|^p \right] \lesssim_{\rho, \varphi} | t - s |^{p (1 / 2 + (1 -
     \lambda) / 4)} N^{- \lambda p / 2}, \]
  while for $N \leqslant | t - s |^{- 1 / 2}$ we get
  \begin{align*}
    &\mathbb{E} \left[ \left| \int_s^t u_s^{\diamond 2} \dd s (\varphi) -
    \int_s^t [(u_s \ast \rho^N)^2 - \| \rho^N \|_{L^2}^2] (\varphi) \dd s
    \right|^p \right]\\
    &\hspace{10pt} \lesssim_{\rho, \varphi} \{ | t - s |^{p / 2} N^{- p / 2} \}^{\lambda}
    \times \left\{ \mathbb{E} \left[ \left| \int_s^t u_s^{\diamond 2} \dd
    s (\varphi) \right|^p \right] +\mathbb{E} \left[ \left| \int_s^t [(u_s
    \ast \rho^N)^2 - \| \rho^N \|_{L^2}^2] (\varphi) \dd s \right|^p
    \right] \right\}^{1 - \lambda}\\
    &\hspace{10pt}\lesssim_{\rho, \varphi} \{ | t - s |^{p / 2} N^{- p / 2} \}^{\lambda}
    \times \{ | t - s |^{3 p / 4} + | t - s |^p N^{p / 2} \}^{1 - \lambda}\\
    &\hspace{10pt}\leqslant \{ | t - s |^{p / 2} N^{- p / 2} \}^{\lambda} \times \{ | t - s
    |^{3 p / 4} \}^{1 - \lambda},
  \end{align*}
  so that choosing $\lambda > 0$  small and applying once more
  Kolmogorov's continuity criterion we get the convergence of $\int_s^t [(u_s \ast \rho^N)^2 - \| \rho^N \|_{L^2}^2] (\varphi) \dd s$ to $\int_s^t u_s^{\diamond 2} \dd s (\varphi)$ in $L^p(\Omega; C^{\alpha} ([0, T], \mathbb{R}))$ for all $\alpha < 3 / 4$ and all $p > 0$.
\end{proof}

\section{Proof of the main results}\label{sec:proof}

\subsection{Mapping to the Stochastic Heat Equation}\label{sec:map-SHE}

Let $W$ be a space-time white noise on the filtered probability space
  $(\Omega, \mathcal{F}, (\mathcal{F}_t)_{t \geqslant 0}, \mathbb{P})$ and
  let $\eta$ be a $\mathcal{F}_0$-measurable space white noise. Let $(u,\mathcal A) \in \CQ(W,\eta)$ be a strong stationary solution to the stochastic Burgers equation
\begin{equation}\label{eq:SHE-Burgers}
   \dd u_t = \Delta u_t \dd t + \lambda \partial_x u^2_t \dd t +
  \sqrt{2} \partial_x \dd W_t, \qquad u_0 = \eta.
\end{equation}
Our aim is to show that $u$ is unique up to indistinguishability, that is to prove the first part of Theorem~\ref{thm:burgers-uniqueness}. The case $\lambda = 0$ is well understood so from now on let $\lambda \neq 0$. The
basic strategy is to integrate $u$ in the space variable and then to
exponentiate the integral and to show that the resulting process solves (a variant of)
the linear stochastic heat equation. However, it is not immediately obvious
how to perform the integration in such a way that we obtain a useful integral
process. Note that any integral $h$ of $u$ is determined uniquely by its
derivative $u$ and the value $h (\sigma)$ for a test function $\sigma$ with
$\int_{\mathbb{R}} \sigma (x) \dd x = 1$. So the idea, inspired by~\cite{bib:funakiQuastel}, is to fix one such test function and to consider the integral $h$ with $h_t (\sigma) \equiv 0$ for all $t \geqslant 0$.

More concretely, we take $\sigma \in C^{\infty}_c (\mathbb{R})$ with $\sigma
\geqslant 0$ and $\int_{\mathbb{R}} \sigma (x) \dd x = 1$ and consider the
function
\begin{align}\label{eq:Theta-def} \nonumber
  \Theta_x (z) & \assign \int_{\mathbb{R}} \Theta_{x, y} (z) \sigma (y) \dd
  y \assign \int_{\mathbb{R}} (\1_{y \leqslant z \leqslant x}
  -\1_{x < z < y}) \sigma (y) \dd y \\
  & =\1_{z \leqslant x} - \int_{\mathbb{R}} (\1_{y > z}
  \1_{z \leqslant x} +\1_{x < z} \1_{z < y}) \sigma
  (y) \dd y =\1_{(- \infty, x]} (z) - \int_z^{\infty} \sigma (y)
  \dd y,
\end{align}
which satisfies for any $\varphi \in \CS(\R)$
\[ \partial_x \langle \Theta_x, \varphi \rangle_{L^2(\R)} = \partial_x \left(
   \int_{\mathbb{R}} \left( \int_{[y, x]} \varphi (z) \dd z - \int_{ (x,
   y)} \varphi (z) \dd z \right) \sigma (y) \dd y \right) =
   \int_{\mathbb{R}} \varphi (x) \sigma (y) \dd y = \varphi (x) \]
and
\[ \int_{\mathbb{R}} \langle \Theta_x, \varphi \rangle_{L^2(\R)} \sigma (x)
   \dd x = \int_{\mathbb{R}} \int_{\mathbb{R}} \int_{\mathbb{R}}
   (\1_{y \leqslant z \leqslant x} -\1_{x < z < y}) \sigma
   (y) \sigma (x) \dd y \dd x \varphi (z) \dd z = 0, \]
so $x \mapsto \langle \Theta_x, \varphi \rangle_{L^2}(\R)$ is the unique integral
of $\varphi$ which vanishes when tested against $\sigma$. Moreover,
\begin{equation}\label{eq:Theta-Deriv} \langle \Theta_x, \partial_z \varphi \rangle_{L^2(\R)} =
  \int_{\mathbb{R}} (\1_{y \leqslant x} (\varphi (x) - \varphi (y))
  -\1_{x < y} (\varphi (y) - \varphi (x))) \sigma (y) \dd y =
  \varphi (x) - \langle \varphi, \sigma \rangle_{L^2(\R)},
\end{equation}
and in particular
\begin{equation}\label{eq:Laplacian-relation}
  \Delta_x \langle \Theta_x, \varphi
  \rangle_{L^2(\R)} = \partial_x \varphi (x) = \langle \Theta_x, \Delta \varphi
  \rangle_{L^2(\R)} + \langle \partial_z \varphi, \sigma \rangle_{L^2_z(\R)},
\end{equation}
where the notation $L^2_{z}(\R)$ means that the $L^2(\R)$-norm is taken in the variable $z$. 
Let now $\rho \in \CS (\mathbb{R})$ be an even function with $\hat{\rho} \in
C^\infty_c (\mathbb{R})$ and such that $\hat{\rho} \equiv 1$ on a
neighborhood of $0$. We write $\rho^L (x) \assign L \rho (Lx)$, and by our
assumptions on $\rho$ there exists for every $L \in \mathbb{N}$ an
$M \in \mathbb{N}$ such that $\rho^N \ast \rho^L = \rho^L$ for all $N
\geqslant M$. This will turn out to be convenient later. We also write
$\rho^L_x (z) \assign \rho^L (x - z)$ and $\Theta^L_x \assign \rho^L \ast
\Theta_x \in \CS(\R)$ and define
\[ u^L_t (x) \assign (u_t \ast \rho^L) (x) = u_t (\rho^L_x), \qquad h^L_t (x)
   \assign \int_{\mathbb{R}} \Theta_x (z) u^L_t (z) \dd z, \qquad x \in
   \mathbb{R}. \]
Using that $\rho^L$ is even we get $h^L_t (x) = u_t
(\Theta^L_x)$. So since $u$ is a strong stationary solution
to~(\ref{eq:SHE-Burgers}) we have
\[ \dd h^L_t (x) = u_t (\Delta_z \Theta^L_x) \dd t + \dd
   \mathcal{A}_t (\Theta^L_x) + \sqrt{2} \dd W_t (-\partial_z \Theta^L_x), \qquad
   \dd [ h^L (x) ]_t = 2 \| \partial_z \Theta^L_x \|_{L^2(\R)}^2
   \dd t, \]
and~(\ref{eq:Theta-Deriv}) yields
\begin{equation}
  \label{eq:ThetaL-Deriv} \partial_z \Theta^L_x (z) = \int_{\mathbb{R}}
  \Theta_x (z') \partial_z \rho^L (z - z') \dd z' = - \rho^L_z (x) +
  \langle \rho^L_z, \sigma \rangle_{L^2(\R)} = - \rho^L_x (z) + (\rho^L \ast
  \sigma) (z) .
\end{equation}
Set now $\phi^L_t (x) \assign e^{\lambda h^L_t (x)}$. Then the It{\^o} formula for
Dirichlet processes of~\cite{bib:russoVallois} gives
\begin{align*}
  \dd \phi^L_t (x) & = \phi^L_t (x) \left( \lambda \dd h^L_t (x) +
  \frac{1}{2} \lambda^2 \dd [ h^L (x) ]_t \right)\\
  & = \lambda \phi^L_t (x) \left( u^L_t (\Delta_z \Theta^L_x) \dd t + \dd
  \mathcal{A}_t (\Theta^L_x) + \sqrt{2} \dd W_t (-\partial_z \Theta^L_x) + \lambda \|
  \rho^L_x - \rho^L \ast \sigma \|_{L^2(\R)}^2 \dd t \right),
\end{align*}
and from~(\ref{eq:Laplacian-relation}) we get
\begin{align*}
  \Delta_z \Theta^L_x (z) & = \langle \Theta_x, \Delta_z \rho^L_z \rangle_{L^2(\R)}
  = \langle \Theta_x, \Delta_{z'} \rho^L_z \rangle_{L^2_{z'}(\R)} = \Delta_x \langle \Theta_x, \rho^L_z \rangle_{L^2(\R)} - \langle \partial_{z'} \rho^L_z,
  \sigma \rangle_{L^2_{z'}(\R)} \\
  &= \Delta_x \Theta^L_x (z) + \partial_z (\rho^L \ast \sigma) (z).
\end{align*}
Therefore, $u^L_t (\Delta_z \Theta^L_x) = \Delta_x h^L_t (x) + u_t (\rho^L
\ast \partial_z \sigma)$. Since moreover
\[ \lambda \phi^L_t (x) \Delta_x h^L_t (x) = \Delta_x \phi^L_t (x) - \lambda^2
   \phi^L_t (x) (\partial_x h^L_t (x))^2, \]
we obtain
\begin{align*}
  \dd \phi^L_t (x) & = \Delta_x \phi^L_t (x) \dd t + \sqrt{2} \lambda \phi^L_t (x) \dd W_t (-\partial_z \Theta^L_x) \\
  &\quad +\lambda^2 \phi^L_t
  (x) (\lambda^{- 1} \dd \mathcal{A}_t (\Theta^L_x) + (\lambda^{- 1} u_t
  (\rho^L \ast \partial_z \sigma) - (u_t^L (x))^2 + \| \rho^L_x - \rho^L \ast
  \sigma \|_{L^2(\R)}^2) \dd t)
\end{align*}
Expanding the $L^2(\R)$-inner product and noting that $W (-\partial_z \Theta^L_x) = W (\rho^L_x) - W
(\rho^L \ast \sigma)$ by~(\ref{eq:ThetaL-Deriv}), we deduce that
\begin{align}\label{eq:PhiL-x} \nonumber
   \dd \phi^L_t (x) & = \Delta_x \phi^L_t (x) \dd t +
  \sqrt{2} \lambda \phi^L_t (x) \dd W_t (\rho^L_x) + \lambda^2 \dd
  R^L_t (x) + \lambda^2 K^L_x \phi^L(x) \dd t + \lambda^2 \phi^L_t (x) \dd Q^L_t\\
  &\quad - 2 \lambda^2 \phi^L_t (x) \langle \rho^L_x, \rho^L \ast \sigma
  \rangle_{L^2(\R)} \dd t -  \sqrt{2} \lambda \phi^L_t (x) \dd W_t (\rho^L \ast
  \sigma),
\end{align}
where we introduced the processes
\[ R^L_t (x) \assign \int_0^t \phi^L_s (x) \{ \lambda^{- 1} \dd
   \mathcal{A}_s (\Theta^L_x) - ((u_s^L (x))^2 - \langle (u^L_s)^2, \sigma
   \rangle_{L^2(\R)}) \dd s - K^L_x \dd s \} \]
for a deterministic function $K^L$ to be determined, and
\[
  Q^L_t \assign \int_0^t \{ - \langle (u^L_s)^2 - \| \rho^L \|_{L^2(\R)}^2, \sigma
  \rangle_{L^2(\R)} + \| \rho^L \ast \sigma \|_{L^2(\R)}^2 + \lambda^{- 1} u_s (\rho^L
  \ast \partial_z \sigma) \} \dd s.
\]
Integrating~\eqref{eq:PhiL-x} against $\varphi \in C^{\infty}_c (\mathbb{R})$ we get
\begin{align}\label{eq:PhiL}\nonumber
  \dd \phi^L_t (\varphi) & = \phi^L_t (\Delta \varphi) \dd t + \sqrt{2}
  \lambda \int_{\mathbb{R}} \phi^L_t (x) \varphi (x) \dd W_t (\rho^L_x)
  \dd x + \lambda^2 \dd R^L_t (\varphi) + \lambda^2 \phi^L_t( K^L \varphi) + \lambda^2 \phi^L_t (\varphi)
  \dd Q^L_t\\
  & \quad - 2 \lambda^2 \int_{\mathbb{R}} \phi^L_t (x) \varphi (x) \langle \rho^L_x,
  \rho^L \ast \sigma \rangle_{L^2(\R)} \dd x \dd t - \sqrt{2}\lambda \phi^L_t
  (\varphi) \dd W_t (\rho^L \ast \sigma) .
\end{align}

In Section~\ref{sec:convergence} below we will prove the following three lemmas.

\begin{lemma}\label{lem:remainder}
   We have for all $T > 0$, $p > 2$ and all $\varphi \in
  C^{\infty}_c (\mathbb{R})$
  \[ \lim_{L \rightarrow \infty} (\mathbb{E} [\| R^L (\varphi) \|_{p -
     \tmop{var} ; [0, T]}^2] +\mathbb{E} [\sup_{t \leqslant T} | R^L_t
     (\varphi) |^2]) = 0. \]
\end{lemma}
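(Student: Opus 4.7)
The plan is to represent $R^L_t(\varphi)$ as an additive functional $\int_0^t F^L(u_s)\,\dd s$ of the controlled process and then apply the Kipnis--Varadhan inequality of Corollary~\ref{cor:KV}; the lemma will follow once we show $\|F^L\|_{-1}\to 0$ as $L\to\infty$.

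First I will identify the integrand. Because $(u,\mathcal A)$ is a strong stationary solution, $\lambda^{-1} \dd \mathcal A_s(\Theta^L_x) = u_s^{\diamond 2}(-\partial_z \Theta^L_x)\dd s = u_s^{\diamond 2}(\rho^L_x - \rho^L\ast\sigma)\dd s$ by~\eqref{eq:ThetaL-Deriv}. Computing the chaos decomposition in the spirit of the proof of Proposition~\ref{prop:kpz drift}, the Wick-normalization constants $\|\rho^L\|_{L^2(\R)}^2$ cancel in pairs against the constants coming from the pointwise squares, and the bracket defining $R^L_t(x)$ reduces (modulo $K^L_x$) to a pure second-chaos random variable $W_2(f^L_x)(u_s)$ whose Fourier symbol factorizes cleanly as
\[
   \hat f^L_x(\xi_1,\xi_2) = \bigl[\hat{\rho^L}(\xi_1+\xi_2) - \hat{\rho^L}(\xi_1)\hat{\rho^L}(\xi_2)\bigr] \cdot \bigl[e^{2\pi i x(\xi_1+\xi_2)} - \hat\sigma(\xi_1+\xi_2)\bigr].
\]
The first factor measures the defect between the Wick and the pointwise square and is the source of smallness as $L\to\infty$; the second factor encodes the weight implementing the $\sigma$-average.

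Next I will set
\[
   F^L(u) \assign \int_{\R} \varphi(x)\, e^{\lambda u(\Theta^L_x)} \bigl[W_2(f^L_x)(u) - K^L_x\bigr] \dd x,
\]
choosing $K^L_x$ so that $\E[F^L(u_0)] = 0$, which is necessary because $\|\cdot\|_{-1}$ is infinite on nonzero constants; pointwise $K^L_x = \E[e^{\lambda u_0(\Theta^L_x)} W_2(f^L_x)(u_0)]/\E[e^{\lambda u_0(\Theta^L_x)}]$, and by Cameron--Martin this reduces to the explicit number $\lambda^2 \langle f^L_x, \Theta^L_x\otimes\Theta^L_x\rangle_{L^2(\R^2)}$. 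After approximating $\mathcal A$ by its mollified version $\lambda\int_0^\cdot [(u_s\ast\rho^N)^2 - \|\rho^N\|_{L^2(\R)}^2]\dd s$ and passing to the limit using Proposition~\ref{prop:kpz drift}, Corollary~\ref{cor:KV} yields
\[
   \E\bigl[\|R^L(\varphi)\|_{p\text{-var};[0,T]}^2\bigr] + \E\bigl[\sup_{t\leqslant T} |R^L_t(\varphi)|^2\bigr] \lesssim T \|F^L\|_{-1}^2,
\]
so it only remains to show $\|F^L\|_{-1} \to 0$.

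To bound $\|F^L\|_{-1}$ I will use the variational characterization: testing against $G\in\CC$, I expand $e^{\lambda u(\Theta^L_x)}$ in its chaos series and apply Lemma~\ref{lem:IBP} together with the product formula for Wiener chaos to reduce $2\E[F^L(u_0)G(u_0)]-\|G\|_1^2$ to a weighted Gaussian integral whose effective kernel is built from $\hat f^L_x$, $\hat\varphi$ and $\hat{\Theta^L_x}$. Since $\varphi\in C^\infty_c(\R)$ and $\Theta^L_x\to\Theta_x$ in $L^2(\R)$ uniformly on $\operatorname{supp}\varphi$, all $\phi^L$-moments are uniformly bounded in $L$, so the smallness must come entirely from $g^L(\xi_1,\xi_2)\assign \hat{\rho^L}(\xi_1+\xi_2) - \hat{\rho^L}(\xi_1)\hat{\rho^L}(\xi_2)$. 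This function is bounded by $2\|\hat\rho\|_{L^\infty(\R)}^2$ and converges pointwise to $0$ because $\hat{\rho^L}(\xi) = \hat\rho(\xi/L) \to \hat\rho(0) = 1$ for every fixed $\xi$; dominated convergence then kills the weighted $\dot H^{-1}_s(\R^2)$-type norm and delivers $\|F^L\|_{-1}\to 0$. The main obstacle is precisely this last step: the non-Gaussian exponential factor $\phi^L_s(x)$ couples to $W_2(f^L_x)$ and produces cross-contributions in every chaos level, and it is the explicit factorization of $\hat f^L_x$, together with the uniform moment bounds for $\phi^L$ on $\operatorname{supp}\varphi$, that allows one to estimate all of these contributions uniformly in $L$ by a single kernel bound.
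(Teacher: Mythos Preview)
Your overall plan---write $R^L$ as an additive functional of $u$ and then invoke Corollary~\ref{cor:KV}---coincides with the paper's, and your identification of the second-chaos kernel via the Fourier symbol $\hat f^L_x(\xi_1,\xi_2)=[\hat\rho^L(\xi_1+\xi_2)-\hat\rho^L(\xi_1)\hat\rho^L(\xi_2)]\,[e^{2\pi i x(\xi_1+\xi_2)}-\hat\sigma(\xi_1+\xi_2)]$ and of $K^L_x=\lambda^2\langle f^L_x,\Theta^L_x\otimes\Theta^L_x\rangle_{L^2(\R^2)}$ are both correct. The gap is in the last paragraph, where you have to actually bound $\|F^L\|_{-1}$.

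First, a technical point: you cannot apply Corollary~\ref{cor:KV} to the limiting $F^L$ directly, because $u_0^{\diamond 2}(\psi)$ is \emph{not} a random variable in $L^2(\mu)$---only the time-integral $\int_0^t u_s^{\diamond 2}\dd s(\psi)$ exists. The paper keeps the $\rho^N$-regularization in place, bounds $\|r^{L,N}(\cdot,\varphi)\|_{-1}$ uniformly in $N$, and only then sends $N\to\infty$ using Corollary~\ref{cor:kpz drift hoelder}. This is easily repaired, but you should say so.

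The real problem is the argument for $\|F^L\|_{-1}\to 0$. Because of the non-Gaussian factor $\phi^L_0(x)=e^{\lambda u_0(\Theta^L_x)}$, the functional $F^L$ lives in \emph{every} chaos, so $\|F^L\|_{-1}$ is an infinite sum of $\dot H^{-1}_s(\R^n)$-norms and is certainly not ``a weighted $\dot H^{-1}_s(\R^2)$-type norm'' of $f^L_x$. Expanding $\phi^L_0(x)$ in chaos as you propose gives infinitely many terms, each involving contractions of $(\Theta^L_x)^{\otimes n}$ with $f^L_x$, and you have produced no dominating function for a dominated-convergence argument: even for the second-chaos contribution alone the relevant weight $|2\pi\xi|^{-2}$ on $\R^2$ is not integrable, so pointwise convergence $g^L\to 0$ together with $|g^L|\leqslant 2$ is not enough. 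The paper avoids this infinite expansion entirely by integrating $W_2(g^{L,N}_x)$ by parts \emph{twice} against the product $\phi^L_0(x)G(u_0)$ via Lemma~\ref{lem:IBP} (this is Lemma~\ref{lem:rLN-decomposition}); the point is that the Malliavin derivatives then land either on $G$, contributing $\|\dD G\|_{\dot H^1}=\|G\|_1$, or on $\phi^L_0(x)$, where they produce explicit factors $\lambda\Theta^L_x$. The upshot is that $\|F^L\|_{-1}$ is controlled by two \emph{one-variable} $\dot H^{-1}(\R)$-norms, $A_1^{L,N}$ and $C_1^{L,N}$, of explicit kernels. Showing these vanish still requires work: the paper passes to a probabilistic representation (the random variables $R^L_i,R^N_i,Z$ of Lemma~\ref{lem:Gxx}) to extract the rate $L^{-1/2}$ from increments of test functions, rather than from a Fourier dominated-convergence argument. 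Your sketch identifies the right ``small parameter'' $g^L$ but does not supply the mechanism that turns it into a bound on $\|F^L\|_{-1}$; the double integration by parts and the subsequent real-space estimates are precisely the missing ideas.
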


\begin{lemma}\label{lem:KL}
   The deterministic function $K^L$ converges to $\lambda^2/12$ as $L \to \infty$ and is uniformly bounded in the sense that $\sup_{L\in \N, x\in \R} |K^L_x| < \infty$.
\end{lemma}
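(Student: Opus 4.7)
The approach is to identify $K^L_x$ explicitly and then compute its limit. The drift
\[
F(u) := \lambda^{-1}\partial_s\mathcal{A}_s(\Theta_x^L) - \bigl[(u^L(x))^2 - \langle (u^L)^2, \sigma\rangle\bigr]
\]
is of pure second chaos, $F = W_2(\tilde f)$, because the $\|\rho^L\|_{L^2}^2$ contributions from the two squared terms cancel exactly, and it has mean zero under $\mu$. In order for $R^L_t(x) = \int_0^t \phi^L_s(x)\,[F(u_s) - K^L_x]\,ds$ to be negligible as required by Lemma~\ref{lem:remainder}, the constant $K^L_x$ should equal the stationary ratio $\mathbb{E}[\phi^L_0(x) F(u_0)]/\mathbb{E}[\phi^L_0(x)]$. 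Using the conditional expectation formula $\mathbb{E}[W_2(\tilde f) \mid h^L_0(x) = y] = \|\Theta_x^L\|_{L^2}^{-2} \langle \tilde f, \Theta_x^L \otimes \Theta_x^L\rangle (y^2/\|\Theta_x^L\|_{L^2}^2 - 1)$ together with the elementary identity $\mathbb{E}[e^{aX}(X^2/\sigma^2 - 1)] = a^2\sigma^2 \mathbb{E}[e^{aX}]$ for $X \sim N(0,\sigma^2)$, this ratio collapses to
\[
K^L_x = \lambda^2 \langle \tilde f, \Theta_x^L \otimes \Theta_x^L\rangle_{L^2(\R^2)}.
\]

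I would then compute each of the three contributions to $\tilde f$ separately. The chaos-2 kernel of $\partial_x u^{\diamond 2}(\Theta_x^L)$ is a diagonal distribution, which for the contraction against $\Theta_x^L \otimes \Theta_x^L$ one accesses by a $\rho^M$-regularization: $\langle k_M, \Theta_x^L \otimes \Theta_x^L\rangle = \int (-\partial_z \Theta_x^L)(z)(\rho^M \ast \Theta_x^L)^2(z)\,dz \to \int (-\partial_z \Theta_x^L)(z)\,\Theta_x^L(z)^2\,dz$, which equals $0$ by integration by parts since $\Theta_x^L \in \CS(\R)$. The kernel $-\rho_x^L \otimes \rho_x^L$ of $-(u^L(x))^2$ pairs with $\Theta_x^L \otimes \Theta_x^L$ to give $-\theta_x^L(x)^2$, and the kernel of $\langle (u^L)^2,\sigma\rangle$ gives $\int \theta_x^L(z)^2 \sigma(z)\,dz$, where $\theta_x^L := \rho^L \ast \Theta_x^L = (\rho^L \ast \rho^L) \ast \Theta_x$. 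Thus
\[
K^L_x = -\lambda^2\Bigl[\theta_x^L(x)^2 - \int \theta_x^L(z)^2\,\sigma(z)\,dz\Bigr].
\]
Uniform boundedness is immediate: $\|\Theta_x\|_\infty \le 1$ and $\|\rho^L \ast \rho^L\|_{L^1} \le \|\rho\|_{L^1}^2$ yield $|\theta_x^L| \le \|\rho\|_{L^1}^2$, hence $|K^L_x| \le 2\lambda^2\|\rho\|_{L^1}^4$ uniformly in $L,x$.

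For the limit, $\rho^L \ast \rho^L$ is a \emph{symmetric} approximate identity. Away from $z = x$ the function $\Theta_x$ is continuous, so $\theta_x^L(z) \to \Theta_x(z)$ pointwise with uniform bound, and dominated convergence gives $\int \theta_x^L(z)^2 \sigma(z)\,dz \to \int \Theta_x(z)^2 \sigma(z)\,dz$. At the jump of $\Theta_x$ at $z = x$ the mollifier's symmetry forces $\theta_x^L(x) \to \tfrac{1}{2}(\Theta_x(x^-) + \Theta_x(x^+)) = \Sigma(x) - \tfrac{1}{2}$, where $\Sigma(x) := \int_{-\infty}^x \sigma(y)\,dy$. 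Using $\sigma\,dz = d\Sigma$,
\[
\int \Theta_x(z)^2\sigma(z)\,dz = \tfrac{1}{3}\Sigma(x)^3 + \tfrac{1}{3}(1-\Sigma(x))^3 = \Sigma(x)^2 - \Sigma(x) + \tfrac{1}{3},
\]
whereas $\theta_x^L(x)^2 \to \Sigma(x)^2 - \Sigma(x) + \tfrac{1}{4}$. The $\Sigma$-dependence cancels and the difference is $\tfrac{1}{4} - \tfrac{1}{3} = -\tfrac{1}{12}$, so $K^L_x \to \lambda^2/12$.

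The main obstacle is rigorously handling the chaos-2 kernel of the Burgers nonlinearity, which is a diagonal (and hence non-$L^2$) distribution on $\R^2$. I would sidestep this by keeping the $\rho^M$-regularization throughout all kernel manipulations, reducing the computation to deterministic integrals of smooth functions and only passing to $M\to\infty$ once the pairing has been reduced to an integral dominated by the Schwartz decay of $\Theta_x^L$.
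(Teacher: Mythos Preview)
Your proposal is correct and follows essentially the same route as the paper: both identify $K^L_x$ as $\lambda^2\langle \tilde f,\Theta^L_x\otimes\Theta^L_x\rangle$, observe that the Burgers-drift contribution vanishes by the integration-by-parts identity $\int(-\partial_z\Theta^L_x)(\Theta^L_x)^2\,dz=0$, and are left with $K^L_x=-\lambda^2\bigl[(\rho^L\ast\rho^L\ast\Theta_x)^2(x)-\langle(\rho^L\ast\rho^L\ast\Theta_x)^2,\sigma\rangle\bigr]$, whose limit is then computed using the symmetry of $\rho\ast\rho$ at the jump of $\Theta_x$. Your substitution $\Sigma(z)=\int_{-\infty}^z\sigma$ packages the final integral calculation more cleanly than the paper's explicit expansion, but the argument is otherwise identical.
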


\begin{lemma}\label{lem:constant process}
  For all $T > 0$ the process $(Q^L_t)_{t \in [0, T]}$ converges in probability in $C^{3 / 4 -}([0, T], \mathbb{R})$ to the zero quadratic variation process
 \[
   Q_t \assign - \int_0^t u^{\diamond 2}_s \dd s (\sigma) + \| \sigma \|_{L^2(\R)}^2 t + \lambda^{-1} \int_0^t u_s(\partial_z \sigma) \dd s, \qquad t \in [0,T].
\]
\end{lemma}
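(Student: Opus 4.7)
The plan is to decompose $Q^L$ into three pieces matching the three summands inside the $\dd s$ integral and to handle each separately, then combine the convergence with a soft argument for zero quadratic variation. Concretely, write $Q^L_t = -A^L_t + B^L_t + C^L_t$ with
\begin{align*}
A^L_t &\assign \int_0^t \bigl[(u_s \ast \rho^L)^2 - \|\rho^L\|_{L^2(\R)}^2\bigr](\sigma)\,\dd s,\\
B^L_t &\assign \|\rho^L \ast \sigma\|_{L^2(\R)}^2\, t,\\
C^L_t &\assign \lambda^{-1}\int_0^t u_s(\rho^L \ast \partial_z \sigma)\,\dd s.
\end{align*}
Because $\rho \in \CS(\R)$ with $\hat\rho \equiv 1$ on a neighbourhood of $0$, the family $(\rho^L)$ is an approximate identity of the kind required by Proposition~\ref{prop:kpz drift} and Corollary~\ref{cor:kpz drift hoelder}, with $\sigma \in C^\infty_c(\R) \subset \CS(\R)$ playing the role of the test function.

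For $A^L$, I apply Corollary~\ref{cor:kpz drift hoelder} directly with $\varphi = \sigma$ to obtain, for every $\alpha < 3/4$ and $p > 0$, the convergence of $A^L$ to $\int_0^\cdot u_s^{\diamond 2}\,\dd s\,(\sigma)$ in $L^p(\Omega; C^\alpha([0,T],\R))$. For $B^L$, the standard fact that $\rho^L \ast \sigma \to \sigma$ in $L^2(\R)$ gives $\|\rho^L \ast \sigma\|_{L^2(\R)}^2 \to \|\sigma\|_{L^2(\R)}^2$ deterministically, so $B^L \to \|\sigma\|_{L^2(\R)}^2 \cdot t$ in $C^\infty([0,T])$. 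For $C^L$, set $g^L \assign \rho^L \ast \partial_z \sigma - \partial_z \sigma$, noting $\|g^L\|_{L^2(\R)} \to 0$. Since at each time $u_s$ is a white noise and Jensen's inequality gives
\[
\E\Big[\Big|\int_s^t u_r(g^L)\,\dd r\Big|^2\Big] \leqslant |t-s|\int_s^t \E[|u_r(g^L)|^2]\,\dd r = |t-s|^2 \|g^L\|_{L^2(\R)}^2,
\]
Gaussian hypercontractivity yields $\E[|\int_s^t u_r(g^L)\,\dd r|^p] \lesssim |t-s|^p \|g^L\|_{L^2(\R)}^p$ for all $p \geqslant 2$, and Kolmogorov's continuity criterion then produces convergence of $C^L$ to $\lambda^{-1}\int_0^\cdot u_s(\partial_z \sigma)\,\dd s$ in $L^p(\Omega; C^\alpha([0,T],\R))$ for every $\alpha < 1$, in particular for every $\alpha < 3/4$.

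Summing the three convergences gives $Q^L \to Q$ in probability in $C^\alpha([0,T],\R)$ for every $\alpha < 3/4$, as claimed. For the zero quadratic variation property of the limit, I use that each of the three summands defining $Q$ lies in $C^\alpha$ for some $\alpha > 1/2$: the first by Corollary~\ref{cor:kpz drift hoelder}, the second trivially, and the third by a repetition of the Kolmogorov argument above applied to $g = \partial_z \sigma$ in place of $g^L$. For any $f \in C^\alpha([0,T],\R)$ with $\alpha > 1/2$ one has $\frac{1}{\varepsilon}(f(s+\varepsilon)-f(s))^2 \lesssim \|f\|_{C^\alpha}^2 \varepsilon^{2\alpha-1}$, and integrating in $s$ and sending $\varepsilon \to 0$ gives $[f]_t = 0$ in the Russo-Vallois sense, uniformly on compacts in probability.

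The only step that uses real input from the preceding theory is the convergence of $A^L$, which is exactly what Corollary~\ref{cor:kpz drift hoelder} was set up to provide, so I expect no substantive obstacles; the mild care required is to track that the $C^{3/4-}$ regularity of the limit of $A^L$ (rather than the better $C^{1-}$ regularity of $B^L$ and $C^L$) is what dictates the $C^{3/4-}$ exponent in the statement of the lemma.
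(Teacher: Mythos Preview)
Your approach is the same as the paper's: decompose $Q^L$ into the three summands, apply Corollary~\ref{cor:kpz drift hoelder} to the renormalized square term, and treat the other two by elementary means (the paper simply calls them ``obvious''). One small technical slip: you invoke Gaussian hypercontractivity on the time integral $\int_s^t u_r(g^L)\,\dd r$, but $u$ is not a Gaussian process, so this integral need not lie in a fixed chaos; the clean fix is Minkowski's inequality $\E[|\int_s^t u_r(g^L)\,\dd r|^p]^{1/p} \le \int_s^t \E[|u_r(g^L)|^p]^{1/p}\,\dd r = |t-s|\,\E[|u_0(g^L)|^p]^{1/p}$ and then use that $u_0(g^L)$ itself is Gaussian.
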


With the help of these results it is easy to prove our main theorem.

\begin{proof}[Proof of Theorem~\ref{thm:burgers-uniqueness}]
  Consider the expansion~(\ref{eq:PhiL}) of $\phi^L (\varphi)$. Denoting
  $\phi_t (\varphi) \assign \lim_{L \rightarrow \infty} \phi^L_t (\varphi)$,
  the stochastic integrals converge to
  \[ \sqrt{2} \lambda \int_{[0, t] \times \mathbb{R}} \phi_s (x) \varphi
     (x) \dd W_s (x) \dd x - \sqrt{2} \lambda \int_0^t \phi_s (\varphi)
     \dd W_s (\sigma) \]
  by the stochastic dominated convergence theorem; see~\cite[Proposition IV.2.13]{bib:revuzYor} for a formulation in the
  finite dimensional setting whose proof carries over without problems to our
  situation. Lemma~\ref{lem:remainder} shows that the $p$-variation of $R^L
  (\varphi)$ converges to zero in $L^2 (\mathbb{P})$ whenever $p > 2$.
  Combining this with Lemma~\ref{lem:KL} and Lemma~\ref{lem:constant process}, the $p$-variation of
  $\phi^L (\varphi)$ stays uniformly bounded in $L$ and therefore we can use
  once more that Lemma~\ref{lem:constant process} gives us local convergence in
  $C^{3 / 4 -}$ for $Q^L$ to obtain that $\int_0^{\cdummy} \phi^L_s (\varphi)
  \dd Q^L_s$ converges as a Young integral to $\int_0^{\cdummy} \phi_s
  (\varphi) \dd Q_s$. In conclusion, we get
  \begin{align*}
    \phi_t (\varphi) & = \langle e^{\eta (\Theta_{\cdummy})}, \varphi
    \rangle_{L^2} + \int_0^t \phi_s (\Delta \varphi) \dd s + \sqrt{2}
    \lambda \int_{[0, t] \times \mathbb{R}} \phi_s (x) \varphi (x) \dd W_s
    (x) \dd x + \frac{\lambda^4}{12} \int_0^t \phi_s(\varphi) \dd s \\
    & \quad + \lambda^2 \int_0^t  \phi_s (\varphi) \dd Q_s - 2 \lambda^2 \int_0^t \phi_s (\varphi \sigma) \dd s - \sqrt{2} \lambda
    \int_0^t \phi_s (\varphi) \dd W_s (\sigma) .
  \end{align*}
  
  So let us define 
  \[
     X_t \assign \sqrt{2} \lambda W_t (\sigma) + \left( - \frac{\lambda^4}{12} +  \lambda^2 \| \sigma \|_{L^2}^2 \right)t - \lambda^2 Q_t ,\qquad Z_t (x) \assign e^{X_t} \phi_t (x), \quad t \geqslant 0,
  \]
  for which $Z_0 (\varphi) = \phi_0 (\varphi)$ and
  \begin{align*}
    \dd Z_t (\varphi) & = e^{X_t} \dd \phi_t (\varphi) + Z_t (\varphi) \dd X_t + \frac{1}{2} Z_t(\varphi) \dd [X]_t + \dd [ \phi (\varphi), e^{X} ]_t \\
    & = Z_t (\Delta \varphi) \dd t + \sqrt{2} \lambda \int_{\mathbb{R}}
    Z_t (x) \varphi (x) \dd W_t (x) \dd x + \frac{\lambda^4}{12} Z_t(\varphi) \dd t + \lambda^2 Z_t (\varphi)
    \dd Q_t \\
    &\quad - 2 \lambda^2 Z_t (\varphi \sigma) \dd t - \sqrt{2} \lambda
    Z_t \dd W_t (\sigma)\\
    &\quad + \sqrt{2} \lambda Z_t (\varphi) \dd W_t (\sigma) + Z_t (\varphi) \left( -\frac{\lambda^4}{12} + \lambda^2 \| \sigma \|_{L^2}^2 \right)
    \dd t - \lambda^2 Z_t (\varphi) \dd Q_t \\
    &\quad + \lambda^2 Z_t(\varphi) \| \sigma\|_{L^2}^2 \dd t \\
    &\quad + 2 \lambda^2 \langle \phi_t \varphi, e^{X_t} \sigma \rangle_{L^2}
    \dd t - 2 \lambda^2 Z_t (\varphi) \| \sigma \|_{L^2}^2 \dd t\\
    & = Z_t (\Delta \varphi) \dd t + \sqrt{2} \lambda \int_{\mathbb{R}}
    Z_t (x) \varphi (x) \dd W_t (x) \dd x.
  \end{align*}
  This means that $Z$ is the unique up to indistinguishability solution to the stochastic heat equation
  \[
     Z_t(\varphi) = \phi_0(\varphi) + \int_0^t Z_s (\Delta \varphi) \dd s +  \sqrt{2} \lambda \int_{[0,t] \times \mathbb{R}}
    Z_s (x) \varphi (x) \dd W_s (x) \dd x.
  \]
  But we know that
  \[ u = \lim_{L \rightarrow \infty} u^L = \lim_{L \rightarrow \infty}
     \partial_x h^L = \lim_{L \rightarrow \infty} \partial_x \lambda^{- 1}
     \log \phi^L = \partial_x \lambda^{- 1} \log \phi ,
  \]
  where the derivative is taken in the distributional sense. Since for fixed $t \geqslant 0$ we have $Z_t (x) = \phi_t (x) e^{X_t}$ and $X_t$ does not depend on the space variable, we get
  \[ \partial_x (\log Z_t (x)) = \partial_x (\log \phi_t (x) + X_t) =
     \partial_x \log \phi_t (x), \]
  and therefore the strong stationary solution $u$ is unique up to indistinguishability.
  
  The uniqueness in law of energy solutions follows in the same way from the weak uniqueness of $Z$.
\end{proof}

\begin{proof}[Proof of Theorem~\ref{thm:kpz-uniqueness}]
   Let $(h, \mathcal{B}) \in \CQ^{\mathrm{KPZ}}$ be a strong almost-stationary solution to the KPZ equation
   \[
     \dd h_t = \Delta h_t \dd t + \lambda ( (\partial_x h_t)^2 - \infty) \dd t + \sqrt{2} \dd W_t, \qquad h_0 = \chi.
   \]
   Since by definition of the pair $(u,\mathcal{A}) \in \CQ(W,\eta)$ we have $u = \partial_x h$ and $\partial_x u(\Theta_x) = u(x)$, we get
   \begin{equation}\label{eq:h vs uTheta}
      h_t(x) = u_t(\Theta_x) + h_t(\sigma),\qquad t \geqslant 0, x \in \R.
   \end{equation}
   Again by definition $u$ is a strong stationary solution to the stochastic Burgers equation. So in the proof of Theorem~\ref{thm:burgers-uniqueness} we showed that
   \begin{equation}\label{eq:uTheta vs CH}
      u_t(\Theta_x) = \lambda^{-1} \log \phi_t(x) = \lambda^{-1} (\log \tilde{Z}_t(x) - X_t),
   \end{equation}
   where $\tilde{Z}$ solves the linear multiplicative heat equation with initial condition $e^{\lambda u_0(\Theta_\cdot)}$ and $X$ was defined by $X_0 = 0$ and
   \begin{align*}
      \dd X_t & = \sqrt{2} \lambda \dd W_t(\sigma) + \left(- \frac{\lambda^4}{12} +  \lambda^2 \| \sigma \|_{L^2}^2 \right) \dd t - \lambda^2 (- u^{\diamond 2}_t (\sigma) \dd t + \| \sigma \|_{L^2}^2 \dd t + \lambda^{-1} u_t(\partial_z \sigma) \dd t)\\
      & = \sqrt{2} \lambda \dd W_t(\sigma) - \frac{\lambda^4}{12} \dd t + \lambda^2 u^{\diamond 2}_t (\sigma) \dd t - \lambda u_t(\partial_z \sigma) \dd t \\
      & = \lambda (h_t(\Delta \sigma) \dd t + \lambda u^{\diamond 2}_t (\sigma) \dd t + \sqrt{2} \dd W_t(\sigma)) - \frac{\lambda^4}{12} \dd t = \lambda \dd h_t(\sigma) - \frac{\lambda^4}{12} \dd t.
   \end{align*}
   This shows that $\lambda^{-1} X_t =  h_t(\sigma) -  h_0(\sigma) - \frac{\lambda^3}{12}t$, and therefore we get with~\eqref{eq:h vs uTheta} and~\eqref{eq:uTheta vs CH}
   \[
      h_t(x) = \lambda^{-1} (\log \tilde{Z}_t(x) - X_t) + h_t(\sigma) = \lambda^{-1} \log \tilde{Z}_t(x) + h_0(\sigma) + \frac{\lambda^3}{12}t.
   \]
   Finally, $\tilde{Z}_t = e^{-\lambda \chi(\sigma)} Z_t$, where $Z$ solves~\eqref{eq:thm-KPZ-SHE}, the linear multiplicative heat equation with initial condition $e^{\lambda \chi}$, and this concludes the proof of the strong uniqueness for strong almost-stationary solutions. The weak uniqueness of energy solutions follows from the weak uniqueness of $Z$.
\end{proof}

\subsection{Convergence of the remainder terms}\label{sec:convergence}

We now proceed to prove Lemmas~\ref{lem:remainder}-\ref{lem:constant
process} on the convergence of $R^L$, $K^L$ and $Q^L$, respectively.

\subsubsection{Proof of Lemmas~\ref{lem:remainder} and~\ref{lem:KL}}

To treat $R^L$ we introduce the auxiliary process
\[ R^{L, N}_t (x) \assign \int_0^t \phi^L_s (x) \{ (- (\rho^N \ast u_s)^2
   (\partial_z \Theta^L_x)) \dd s - ((u_s^L (x))^2 - \langle (u^L_s)^2,
   \sigma \rangle_{L^2}) \dd s - K^{L, N}_x \dd s \} \]
for $K^{L, N}_x$ that will be determined below. Since by assumption $\mathcal{A} = \int_0^\cdot \partial_x u_s^2 \dd s$ and $\partial_z \Theta^L_x \in \CS(\R)$ is a nice test function, we get from Corollary~\ref{cor:kpz drift hoelder} that $R^{L,N}_t(\varphi)$ converges to $R^{L}_t(\varphi)$ in $L^2(\P)$.

We also define
\[ r^{L, N} (u_s, x) \assign \phi_s^L (x) ((- (\rho^N \ast u_s)^2 (\partial_z
   \Theta^L_x)) - ((u_s^L (x))^2 - \langle (u^L_s)^2, \sigma \rangle_{L^2}) -
   K^{L, N}_x) \]
so that $R_t^{L, N} (x) = \int_0^t r^{L, N} (u_s, x) \dd s$. Using
Corollary~\ref{cor:KV} we can estimate for $\varphi \in C^{\infty}_c
(\mathbb{R})$
\begin{equation}
  \mathbb{E} [\| R^{L,N} (\varphi) \|_{p - \tmop{var} ; [0, T]}^2] +\mathbb{E}
  [\sup_{0 \leqslant t \leqslant T} | R^{L, N}_t (\varphi) |^2] \lesssim (T \vee T^{2}) \|
  r^{L, N} (\cdot, \varphi) \|_{- 1}^2,
\end{equation}
where we recall that
\[ \| r^{L, N} (\cdot, \varphi) \|_{- 1}^2 = \sup_{F \in \CC} \{2\mathbb{E}
   [r^{L, N} (u_0, \varphi) F (u_0)] - \| F \|_1^2\}, \]
where $\CC$ are the cylinder functions and $\| F \|_1^2 = 2 \mathbb{E} [\| \partial_x \dD F (u_0) \|_{L^2
(\mathbb{R})}^2]$ in terms of the Malliavin derivative $\dD$ associated to
the measure $\mu$. We prove below that we can choose $K^{L, N}_x$ so that
$\mathbb{E} [r^{L, N} (u_t, x)] =\mathbb{E} [r^{L, N} (u_0, x)] = 0$ for all
$x \in \mathbb{R}$. This is necessary in order for $\| r^{L, N} (\cdot,
\varphi) \|_{- 1}$ to be finite for all $\varphi \in C^{\infty}_c
(\mathbb{R})$. At this point everything boils down to controlling
$\mathbb{E} [r^{L, N} (u_0, \varphi) F (u_0)]$ and to showing that it goes
to zero as first $N \rightarrow \infty$ and then $L \rightarrow \infty$. 

Observe that the random variable $(- (\rho^N \ast u_0)^2 (\partial_z
\Theta^L_x)) - ((u_0^L (x))^2 - \langle (u^L_0)^2, \sigma
\rangle_{L^2})$ is an element of the second homogeneous chaos of $u_0$. Let us compute its
kernel. From~(\ref{eq:Theta-Deriv}) we get
\begin{align*}
  - (\rho^N \ast u_0)^2 (\partial_z \Theta^L_x) &= \int_{\mathbb{R}^2} \left[
  - \int_{\mathbb{R}} \dd z \partial_z \Theta^L_x (z) \rho^N (z - y_1)
  \rho^N (z - y_2) \right] W (\dd y_1 \dd y_2)\\
  &= \int_{\mathbb{R}^2} \left[ \int_{\mathbb{R}} \dd z \int_{\mathbb{R}}
  \dd z' \Theta_x (z') \partial_{z'} \rho^L (z - z') \rho^N_z (y_1)
  \rho^N_z (y_2) \right] W (\dd y_1 \dd y_2)\\
  &= \int_{\mathbb{R}^2} \left[ \int_{\mathbb{R}} \dd z (\rho^L (z - x) -
  \langle \rho^L (z - \cdummy), \sigma \rangle_{L^2 (\mathbb{R})}) \rho^N_z
  (y_1) \rho^N_z (y_2) \right] W (\dd y_1 \dd y_2)\\
  &= \int_{\mathbb{R}^2} \left[ \int_{\mathbb{R}} \dd z (\rho^L_x (z) -
  \langle \rho^L_z, \sigma \rangle_{L^2 (\mathbb{R})}) \rho^N_z (y_1)
  \rho^N_z (y_2) \right] W (\dd y_1 \dd y_2),
\end{align*}
and furthermore
\[ ((u_0^L (x))^2 - \langle (u^L_0)^2, \sigma \rangle_{L^2}) =
   \int_{\mathbb{R}^2} [\rho^L_x (y_1) \rho^L_x (y_2) - \langle \rho^L_{y_1}
   \rho^L_{y_2}, \sigma \rangle_{L^2 (\mathbb{R})}] W (\dd y_1 \dd y_2)
   . \]
Let therefore
\begin{equation}
  g_x^{L, N} (y_1, y_2) \assign  \int_{\mathbb{R}} \dd z (\rho^L_x
  (z) - \langle \rho^L_z, \sigma \rangle_{L^2 (\mathbb{R})}) \rho^N_z (y_1)
  \rho^N_z \left( y_2 \right) - (\rho^L_x (y_1) \rho^L_x (y_2) - \langle
  \rho^L_{y_1} \rho^L_{y_2}, \sigma \rangle_{L^2 (\mathbb{R})}),
\end{equation}
so that
\[
   W_2 (g_x^{L, N}) = (- (\rho^N \ast u_0)^2 (\partial_z \Theta^L_x)) - ((u_0^L (x))^2 -
   \langle (u^L_0)^2, \sigma \rangle_{L^2}) . \]
We let also $W_1 (g_x^{L, N} (y_1, \cdot)) \assign \int_{\mathbb{R}} g_x^{L,
N} (y_1, y_2) W (\dd y_2)$. Using the partial integration by parts derived in Lemma~\ref{lem:IBP} we are able to bound
$\| r^{L, N} (\cdummy, \varphi) \|_{- 1}$ by a constant:

\begin{lemma}\label{lem:rLN-decomposition}
  Setting $K^{L, N}_x \assign \lambda^2 \int_{\mathbb{R}^2} g_x^{L, N} (y_1,
  y_2) \Theta^L_x (y_1) \Theta^L_x (y_2) \dd y_1 \dd y_2$ we have for
  all $F \in \CC$
  \begin{equation}
    \label{eq:rLN-bound} | \mathbb{E} [r^{L, N} (u_0, \varphi) F (u_0)] |
    \lesssim \| F \|_1 (A_1^{L, N} + C_1^{L, N}),
  \end{equation}
  and in particular $\| r^{L, N} (\cdummy, \varphi) \|_{- 1} \lesssim A_1^{L,
  N} + C_1^{L, N}$, where
  \begin{equation}
    \label{eq:A1-def} A_1^{L, N} \assign \mathbb{E} \left[ \left\|
    \int_{\mathbb{R}} \varphi (x) W_1 (g_x^{L, N} (y_1, \cdot)) \diamond
    \phi_0^L (x) \dd x \right\|_{\dot{H}^{- 1}_{y_1} (\mathbb{R})}^2
    \right]^{1 / 2}
  \end{equation}
  and
  \begin{equation}
    \label{eq:C1-def} C_1^{L, N} \assign \mathbb{E} \left[ \left\|
    \int_{\mathbb{R}} \varphi (x) \phi^L_0 (x) \int_{\mathbb{R}} g_x^{L, N}
    (y_1, y_2) \Theta^L_x (y_1) \dd y_1 \dd x \right\|_{\dot{H}^{-
    1}_{y_2} (\mathbb{R})}^2 \right]^{1 / 2} .
  \end{equation}
  Here the notation $\dot{H}^{\alpha}_y (\mathbb{R})$ means that the norm is
  taken in the $y$-variable and
  \[ W_1 (g_x^{L, N} (y_1, \cdot)) \diamond \phi_0^L (x) \assign W_1 (g_x^{L,
     N} (y_1, \cdot)) \phi_0^L (x) - \int_{\mathbb{R}} g_x^{L, N} (y_1, y_2)
     \dD_{y_2} \phi_0^L (x) \dd y_2 \]
  is a partial Wick contraction in the sense that $\mathbb{E} [W_1 (g_x^{L,
  N} (y_1, \cdot)) \diamond \phi_0^L (x)] = 0$.
\end{lemma}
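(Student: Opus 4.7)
The strategy is to start from the definition of $r^{L,N}$ and to apply the Gaussian integration by parts formula of Lemma~\ref{lem:IBP} to the second-chaos factor $W_2(g_x^{L,N})$ entering $r^{L,N}$, so as to reduce the expectation $\E[r^{L,N}(u_0,\varphi) F(u_0)]$ to an expression containing $\dD F$. The key algebraic identity, used repeatedly, is $\dD_y \phi^L_0(x) = \lambda \Theta^L_x(y) \phi^L_0(x)$, which follows from $\phi^L_0(x) = e^{\lambda u_0(\Theta^L_x)}$: every Malliavin derivative that hits $\phi^L_0$ pulls out a factor $\lambda \Theta^L_x$. The plan is to apply Lemma~\ref{lem:IBP} to $W_2(g_x^{L,N})$, distribute the resulting $\dD_{y_1}$ across the product $\phi^L_0(x) F(u_0)$ by Leibniz, and then on the subterm in which $\dD_{y_1}$ hits $\phi^L_0$ apply Lemma~\ref{lem:IBP} once more and distribute again.

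After this double integration by parts, the only term produced that does not contain $\dD F$ is
\[
\lambda^2 \Bigl(\int_{\R^2} g_x^{L,N}(y_1,y_2)\Theta^L_x(y_1)\Theta^L_x(y_2)\dd y_1 \dd y_2\Bigr)\E[\phi^L_0(x) F(u_0)] = K^{L,N}_x \E[\phi^L_0(x) F(u_0)],
\]
and this is exactly what is needed to absorb the $-K^{L,N}_x$ correction built into $r^{L,N}$. This both justifies the choice of $K^{L,N}_x$ in the statement and, specialised to $F \equiv 1$, gives the required centering $\E[r^{L,N}(u_0,x)] = 0$.

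Every remaining term has the form $\E[\langle \chi, \dD_{\cdot} F(u_0)\rangle_{L^2}]$ for some random field $\chi$, and falls into one of two families. The first family comes from the Leibniz branch in which the initial $\dD_{y_1}$ hits $F$; there I decompose the random factor using the partial Wick contraction
\[
W_1(g_x^{L,N}(y_1,\cdot))\phi^L_0(x) = W_1(g_x^{L,N}(y_1,\cdot))\diamond \phi^L_0(x) + \lambda \int_\R g_x^{L,N}(y_1,y_2)\Theta^L_x(y_2)\phi^L_0(x) \dd y_2,
\]
so that the centered part pairs against $\dD F$ to give, via $\dot H^{-1}_{y_1}$--$\dot H^1_{y_1}$ duality combined with Cauchy--Schwarz in $\omega$ and the identity $\|F\|_1^2 = 2\E[\|\partial_y \dD F\|_{L^2}^2]$, a contribution bounded by $A_1^{L,N}\|F\|_1$. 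The second family collects the non-cancelled residual from the $\dD \phi^L_0$ branch together with the deterministic correction from the diamond decomposition; using the symmetry of $g_x^{L,N}$ in $(y_1,y_2)$, it reduces to a single expression of the form
\[
\int_\R \varphi(x)\phi^L_0(x)\int_\R \Bigl(\int_\R g_x^{L,N}(y_1,y_2)\Theta^L_x(y_1)\dd y_1\Bigr) \dD_{y_2} F(u_0) \dd y_2 \dd x,
\]
and the same duality argument bounds it by $C_1^{L,N}\|F\|_1$. Taking the supremum over $F \in \CC$ then yields the claimed $\dot H^{-1}$ estimate.

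The main obstacle will be the combinatorial bookkeeping: I need the symmetrization of $g_x^{L,N}$ to produce exactly the factor of $2$ required for the $\lambda^2$-coefficient in front of the double-contraction term to match the definition of $K^{L,N}_x$, and I must verify that every surviving term genuinely carries a $\dD F$ factor, so that no uncontrolled $\|F\|_{L^2(\mu)}$-type contribution is left over after the cancellation.
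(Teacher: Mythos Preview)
Your proposal is correct and follows essentially the same approach as the paper: double integration by parts via Lemma~\ref{lem:IBP}, Leibniz rule, cancellation of the $\dD F$-free term against $K^{L,N}_x$, the partial Wick contraction to isolate $A^{L,N}_1$, and the $\dot H^{-1}/\dot H^1$ duality with Cauchy--Schwarz. One small clarification on your last paragraph: the $\lambda^2$ in $K^{L,N}_x$ comes directly from $\dD^2_{y_1,y_2}\phi^L_0 = \lambda^2 \Theta^L_x(y_1)\Theta^L_x(y_2)\phi^L_0$ with no symmetrization needed; the factor of $2$ from the symmetry of $g_x^{L,N}$ instead combines the two mixed terms (one from the second IBP, one from the Wick remainder) into the single $C_1^{L,N}$ contribution.
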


\begin{proof}
  Consider
  \begin{equation}
    \mathbb{E} [r^{L, N} (u_0, \varphi) F (u_0)] = \int_{\mathbb{R}}
    \varphi (x) \mathbb{E} [(W_2 (g_x^{L, N}) - K^{L, N}_x) \phi_0^L (x) F
    (u_0)] \dd x.
  \end{equation}
  Partially integrating by parts $W_2 (g_x^{L, N})$, we have
  \begin{align*}
    \mathbb{E} [W_2 (g_x^{L, N}) \phi_0^L (x) F (u_0)] & =
    \int_{\mathbb{R}} \mathbb{E} [W_1 (g_x^{L, N} (y_1, \cdot)) \dD_{y_1}
    [\phi_0^L (x) F (u_0)]] \dd y_1\\
    & = \int_{\mathbb{R}} \mathbb{E} [W_1 (g_x^{L, N} (y_1, \cdot)) \phi_0^L
    (x) \dD_{y_1} F (u_0)] \dd y_1 \\
    &\quad + \int_{\mathbb{R}} \mathbb{E}
    [W_1 (g_x^{L, N} (y_1, \cdot)) \dD_{y_1} (\phi_0^L (x)) F (u_0)]
    \dd y_1 .
  \end{align*}
  The second term on the right hand side can be integrated by parts again to obtain
  \begin{align*}
    \int_{\mathbb{R}} \mathbb{E} [W_1 (g_x^{L, N} (y_1, \cdot)) \dD_{y_1}
    (\phi_0^L (x)) F (u_0)] \dd y_1 & = \int_{\mathbb{R}^2} g_x^{L, N}
    (y_1, y_2) \mathbb{E} [\dD^2_{y_1, y_2} (\phi_0^L (x)) F (u_0)]
    \dd y_1 \dd y_2\\
    &\quad + \int_{\mathbb{R}^2} g_x^{L, N} (y_1, y_2) \mathbb{E} [\dD_{y_1}
    (\phi_0^L (x)) \dD_{y_2} F (u_0)] \dd y_1 \dd y_2,
  \end{align*}
  while the first term can be written as
  \begin{align*}
    \int_{\mathbb{R}} \mathbb{E} [W_1 (g_x^{L, N} (y_1, \cdot)) \phi_0^L (x)
    \dD_{y_1} F (u_0)] \dd y_1 & = \int_{\mathbb{R}} \mathbb{E} [(W_1
    (g_x^{L, N} (y_1, \cdot)) \diamond \phi_0^L (x)) \dD_{y_1} F (u_0)]
    \dd y_1\\
    &\quad + \int_{\mathbb{R}^2} g_x^{L, N} (y_1, y_2) \mathbb{E} [\dD_{y_2}
    (\phi_0^L (x)) \dD_{y_1} F (u_0)] \dd y_1 \dd y_2 .
  \end{align*}
  In conclusion, we have the decomposition
  \begin{equation}
     \mathbb{E} [r^{L, N} (u_0, \varphi) F (u_0)] = A^{L, N} + B^{L, N} + C^{L, N}
  \end{equation}
  with
  \[ A^{L, N} \assign \int_{\mathbb{R}} \varphi (x) \int_{\mathbb{R}}
     \mathbb{E} [(W_1 (g_x^{L, N} (y_1, \cdot)) \diamond \phi_0^L (x))
     \dD_{y_1} F (u_0)] \dd y_1 \dd x, \]
  \[ B^{L, N} \assign \int_{\mathbb{R}} \varphi (x) \left[
     \int_{\mathbb{R}^2} g_x^{L, N} (y_1, y_2) \mathbb{E} \left[
     \dD_{y_1, y_2}^2 (\phi_0^L (x)) F \left( u_0 \right) \right] \dd
     y_1 \dd y_2 - K^{L, N}_x \mathbb{E} [^{} \phi_0^L (x) F (u_0)]
     \right] \dd x, \]
  and
  \[ C^{L, N} \assign 2 \int_{\mathbb{R}} \varphi (x) \int_{\mathbb{R}^2}
     g_x^{L, N} (y_1, y_2) \mathbb{E} [\dD_{y_1} (\phi_0^L (x))
     \dD_{y_2} F (u_0)] \dd y_1 \dd y_2 \dd x. \]
  So it suffices to bound the three terms $A^{L, N}$, $B^{L, N}$, $C^{L, N}$
  independently. In order to proceed, observe that
  \[ \dD_{y_1} \phi_0^L (x) = \dD_{y_1} (e^{\lambda u_0 (\Theta^L_x)}) =
     \lambda \phi_0^L (x) \Theta^L_x (y_1), \qquad \dD_{y_1, y_2}^2
     \phi_0^L (x) = \lambda^2 \phi_0^L (x) \Theta^L_x (y_1) \Theta^L_x (y_2),
  \]
  so that by definition of $K^{L, N}_x$
  \begin{align*}
    B^{L, N} & = \int_{\mathbb{R}} \varphi (x) \left[ \int_{\mathbb{R}^2}
    g_x^{L, N} (y_1, y_2) \lambda^2 \Theta^L_x (y_1) \Theta^L_x (y_2) \dd
    y_1 \dd y_2 - K^{L, N}_x \right] \mathbb{E} [\phi_0^L (x) F (u_0)]
    \dd x\\
    & = \int_{\mathbb{R}} \varphi (x) [K^{L, N}_x - K^{L, N}_x] \mathbb{E}
    [\phi_0^L (x) F (u_0)] \dd x = 0
  \end{align*}
  and
  \[ C^{L, N} = 2 \int_{\mathbb{R}} \varphi (x) \int_{\mathbb{R}^2} g_x^{L,
     N} (y_1, y_2) \lambda \Theta^L_x (y_1) \mathbb{E} [\phi_0^L (x)
     \dD_{y_2} F (u_0)] \dd y_1 \dd y_2 \dd x. \]
  Using the duality of $\dot{H}^1 (\mathbb{R})$ and
  $\dot{H}^{- 1} (\mathbb{R})$ and the Cauchy-Schwarz inequality, we bound
  further
  \begin{align*}
     (A^{L, N})^2 & \leqslant \mathbb{E} \left[ \left\| \int_{\mathbb{R}} \varphi (x) (W_1 (g_x^{L, N} (y_1, \cdot)) \diamond \phi_0^L (x)) \dd  x \right\|_{\dot{H}^{- 1}_{y_1} (\mathbb{R})}^2 \right] \mathbb{E} [\|  \dD_{y_1} F (u_0) \|_{\dot{H}^1_{y_1} (\mathbb{R})}^2] \\
     & = (A_1^{L,  N})^2 \| F \|_1^2,
  \end{align*}
  where $A^{L, N}_1$ is the constant defined in~(\ref{eq:A1-def}). Similarly, we obtain
  \begin{align*}
    (C^{L, N})^2 & \lesssim \mathbb{E} \left[ \left\| \int_{\mathbb{R}}
    \varphi (x) \phi^L_0 (x) \int_{\mathbb{R}} g_x^{L, N} (y_1, y_2)
    \Theta^L_x (y_1) \dd y_1 \dd x \right\|_{\dot{H}^{- 1}_{y_2}
    (\mathbb{R})} \right]^2 \mathbb{E} [\| \dD_{y_2} F (u_0)
    \|_{\dot{H}^1_{y_2} (\mathbb{R})}^2]\\
    & = (C_1^{L, N})^2 \| F \|_1^2,
  \end{align*}
  where $C^{L, N}_1$ is the constant in~(\ref{eq:C1-def}). This
  proves~(\ref{eq:rLN-bound}).
\end{proof}

So to control $R^{L, N}$ it remains to show that the two constants $A_1^{L,
N}$ and $C_1^{L, N}$ vanish in the limit $N, L \rightarrow \infty$. Before doing so let us prove Lemma~\ref{lem:KL}. More precisely we show the following refined version:

\begin{lemma}
  We have $\sup_{L, N, x} | K^{L, N}_x | < \infty$ and for all $x \in \mathbb{R}$
  \[
     K^L_x \assign \lim_{N \to \infty} K^{L,N}_x = - ((\rho^L \ast \rho^L \ast \Theta_x)^2 (x) - \langle
    (\rho^L \ast \rho^L \ast \Theta_x)^2, \sigma \rangle_{L^2 (\mathbb{R})}) 
  \]
  and $\lim_{L \rightarrow \infty} K^{L}_x =  \lambda^2 / 12$.
\end{lemma}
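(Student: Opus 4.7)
The plan is to compute $K^{L,N}_x$ explicitly from the definition of $g^{L,N}_x$, take $N\to\infty$ using the approximate-identity property of $\rho^N$, and then perform $L\to\infty$ by a careful analysis near the jump of $\Theta_x$. Splitting $g_x^{L,N}$ according to its two summands and applying Fubini, the contribution of the second summand is $N$-independent and evaluates, using that $\rho^L$ is even, to $(\rho^L\ast\rho^L\ast\Theta_x)^2(x)-\langle(\rho^L\ast\rho^L\ast\Theta_x)^2,\sigma\rangle_{L^2(\R)}$; the contribution of the first summand reduces to $\int_\R(\rho^L_x(z)-(\rho^L\ast\sigma)(z))(\rho^N\ast\Theta^L_x)^2(z)\,\dd z$. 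Since $\hat\rho\in C^\infty_c(\R)$ with $\hat\rho\equiv 1$ near zero, for every $L$ there exists $M=M(L)$ such that $\rho^N\ast\rho^L=\rho^L$, hence $\rho^N\ast\Theta^L_x=\Theta^L_x$, for all $N\geq M$. Thus $K^{L,N}_x$ is eventually constant in $N$ and yields the stated closed form for $K^L_x$. The uniform bound $\sup_{L,N,x}|K^{L,N}_x|<\infty$ is a consequence of $\|\Theta_x\|_{L^\infty(\R)}\leq 1$, $\|\rho^L\|_{L^1}=\|\rho^N\|_{L^1}=\|\sigma\|_{L^1}=1$, and Young's inequality applied term by term.

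For the $L$-limit, decompose $\Theta_x(z)=\1_{z\leq x}-F(z)$ with $F(z):=\int_z^\infty\sigma(y)\,\dd y$, so that $\Theta_x$ is smooth except for a unit jump at $z=x$ from $1-F(x)$ to $-F(x)$. Writing $\rho^L\ast\rho^L=L(\rho\ast\rho)(L\cdummy)$, an approximate identity symmetric about the origin, the rescaling $u=L(z-x)$ gives
\[
   (\rho^L\ast\rho^L\ast\Theta_x)(x)\;\longrightarrow\;-F(x)+\int_{-\infty}^{0}(\rho\ast\rho)(u)\,\dd u\;=\;\tfrac12-F(x).
\]
Away from $x$ one has $\rho^L\ast\rho^L\ast\Theta_x\to\Theta_x$ in $L^2_{\mathrm{loc}}(\R)$, so $\langle(\rho^L\ast\rho^L\ast\Theta_x)^2,\sigma\rangle_{L^2}\to\langle\Theta_x^2,\sigma\rangle_{L^2}$, and a direct integration by parts using $\sigma=-F'$, $F(-\infty)=1$, $F(+\infty)=0$ yields
\[
   \langle\Theta_x^2,\sigma\rangle_{L^2(\R)}\;=\;(1-F(x))-2\bigl(\tfrac12-\tfrac{F(x)^2}{2}\bigr)+\tfrac13\;=\;\tfrac13-F(x)+F(x)^2.
\]
Assembling, $-\bigl((\tfrac12-F(x))^2-(\tfrac13-F(x)+F(x)^2)\bigr)=-(\tfrac14-\tfrac13)=\tfrac{1}{12}$, so after the $\lambda^2$ prefactor we obtain $\lim_{L\to\infty}K^L_x=\lambda^2/12$.

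The main obstacle is the delicate behaviour of the mollifier at the jump of $\1_{\cdummy\leq x}$ in $\Theta_x$: a naive substitution $\rho^L\ast\rho^L\ast\Theta_x\to\Theta_x$ would produce the ill-defined $\Theta_x(x)^2$, but the symmetric mollifier selects the midpoint value $\tfrac12-F(x)$, and the non-trivial constant $\tfrac{1}{12}=\tfrac13-\tfrac14$ is precisely the discrepancy between the square of this midpoint and the $\sigma$-weighted integral of $\Theta_x^2$.
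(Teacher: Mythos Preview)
Your approach is the same as the paper's, and your computation of the $L\to\infty$ limit via $\sigma=-F'$ and integration by parts is actually cleaner than the paper's combinatorial argument with triple integrals. However, there is a genuine gap in your derivation of the closed form for $K^L_x$.

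You correctly reduce the first summand of $\lambda^{-2}K^{L,N}_x$ to $\int_{\R}(\rho^L_x(z)-(\rho^L\ast\sigma)(z))\,(\rho^N\ast\Theta^L_x)^2(z)\,\dd z$, and your observation that $\rho^N\ast\Theta^L_x=\Theta^L_x$ for $N\ge M(L)$ is a nice shortcut that makes $K^{L,N}_x$ eventually constant in $N$. But eventual constancy only tells you the limit exists; it does not give you the closed form. For $N\ge M(L)$ the first summand equals $\int_{\R}(\rho^L_x-\rho^L\ast\sigma)(\Theta^L_x)^2\,\dd z$, and to obtain the stated expression for $K^L_x$ (which involves only the second summand) you must show this integral vanishes. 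You never do this. The missing argument is precisely the one the paper gives: by the identity $\partial_z\Theta^L_x=-(\rho^L_x-\rho^L\ast\sigma)$ (equation~\eqref{eq:ThetaL-Deriv}), the integral becomes $-\tfrac{1}{3}\int_{\R}\partial_z(\Theta^L_x)^3\,\dd z=0$ since $\Theta^L_x\in\CS(\R)$. Without this step your sentence ``yields the stated closed form for $K^L_x$'' is an assertion, not a proof.

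Two smaller points: there is a sign slip in your description of the second summand's contribution (the term $-(\rho^L_x(y_1)\rho^L_x(y_2)-\langle\rho^L_{y_1}\rho^L_{y_2},\sigma\rangle)$ in $g^{L,N}_x$ carries a minus sign, so its contribution to $\lambda^{-2}K^{L,N}_x$ is $-((\rho^L\ast\rho^L\ast\Theta_x)^2(x)-\langle\cdots,\sigma\rangle)$, matching the lemma); and $\|\rho^L\|_{L^1}=\|\rho\|_{L^1}$ is a finite constant but not equal to $1$, since a nonnegative Schwartz function cannot have compactly supported Fourier transform. Neither of these affects the argument materially.
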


\begin{proof}
  Recall that $\Theta^L_x = \Theta_x \ast \rho^L$, so
  \begin{align}\label{eq:KL pr1} \nonumber 
    \lambda^{- 2} K^{L, N}_x & = \int_{\mathbb{R}^2} g_x^{L,
    N} (y_1, y_2) \Theta^L_x (y_1) \Theta^L_x (y_2) \dd y_1 \dd y_2\\ \nonumber
    & = \int_{\mathbb{R}} (\rho^L_x (z) - \langle \rho^L_z, \sigma
    \rangle_{L^2 (\mathbb{R})}) (\rho^N \ast \rho^L \ast \Theta_x)^2 \left( z
    \right) \dd z \\
    &\quad  - ((\rho^L \ast \rho^L \ast \Theta_x)^2 (x) - \langle
    (\rho^L \ast \rho^L \ast \Theta_x)^2, \sigma \rangle_{L^2 (\mathbb{R})}).
  \end{align}
  By~(\ref{eq:ThetaL-Deriv}) we know that for $N \rightarrow \infty$ the first
  term on the right hand side converges to
  \begin{align*}
    \int_{\mathbb{R}} (\rho^L_x (z) - \langle \rho^L_z, \sigma \rangle_{L^2
    (\mathbb{R})}) (\rho^L \ast \Theta_x) (z)^2 \dd z & = -
    \int_{\mathbb{R}} \partial_z (\rho^L \ast \Theta_x) (z) (\rho^L \ast
    \Theta_x)^2 (z) \dd z\\
    & = - \frac{1}{3} \int_{\mathbb{R}} \partial_z (\rho^L \ast \Theta_x)^3 (z)
    \dd z = 0,
  \end{align*}
  where in the last step we used that $\rho^L \ast \Theta_x \in \CS
  (\mathbb{R})$. Moreover,
  \begin{align*}
    &\left| \int_{\mathbb{R}} (\rho^L_x (z) - \langle \rho^L_z, \sigma
    \rangle_{L^2 (\mathbb{R})}) (\rho^N \ast \rho^L \ast \Theta_x)^2 (z)
    \dd z \right|\\
    &\hspace{50pt}= | (\rho^L \ast (\rho^N \ast \rho^L \ast \Theta_x)^2) (x) - \langle
    \rho^L \ast (\rho^N \ast \rho^L \ast \Theta_x)^2, \sigma \rangle_{L^2
    (\mathbb{R})} |\\
    &\hspace{50pt} \leqslant \| \rho^L \|_{L^1 (\mathbb{R})} \| (\rho^N \ast \rho^L \ast
    \Theta_x)^2 \|_{L^{\infty} (\mathbb{R})} (1 + \| \sigma \|_{L^1
    (\mathbb{R})}) \leqslant \| \Theta_x \|_{L^{\infty} (\mathbb{R})}^2 (1 +
    \| \sigma \|_{L^1 (\mathbb{R})}) \lesssim 1,
  \end{align*}
  and by similar arguments also the second term on the right hand side
  of~(\ref{eq:KL pr1}) stays bounded in $L, N, x$. Recalling that $\Theta_x
  (z) =\1_{(- \infty, x]} (z) - \int_z^{\infty} \sigma (y) \dd y$,
  we get by symmetry of $\rho \ast \rho$
  \[ \lim_{L \rightarrow \infty} (\rho^L \ast \rho^L \ast \Theta_x) (x) =
     \lim_{L \rightarrow \infty} (\rho \ast \rho)^L \ast \Theta_x (x) =
     \frac{1}{2} - \int_x^{\infty} \sigma (y) \dd y \]
  as well as $\lim_{L \rightarrow \infty} (\rho^L \ast \rho^L \ast \Theta_x)
  =\1_{(- \infty, x]} - \int_{\cdummy}^{\infty} \sigma (y) \dd y$
  in $L^p (\mathbb{R})$ for any $p \in [1, \infty)$. In particular,
  \begin{align} \label{eq:KL pr2} \nonumber
    &\lim_{L \rightarrow \infty} ((\rho^L \ast \rho^L \ast \Theta_x)^2 (x) -
    \langle (\rho^L \ast \rho^L \ast \Theta_x)^2, \sigma \rangle_{L^2
    (\mathbb{R})})\\
    &\hspace{30pt} = \frac{1}{4} - \int_x^{\infty} \sigma (y) \dd y +
    \left( \int_x^{\infty} \sigma (y) \dd y \right)^2 - \int_{\mathbb{R}}
    \left( \1_{(- \infty, x]} (z) - \int_z^{\infty} \sigma (y) \dd
    y \right)^2 \sigma (z) \dd z.
  \end{align}
  For the last term on the right hand side we further get
  \begin{align*}
    &\int_{\mathbb{R}} \left( \1_{(- \infty, x]} (z) -
    \int_z^{\infty} \sigma (y) \dd y \right)^2 \sigma (z) \dd z\\
    &\hspace{15pt}= \int_{- \infty}^x \sigma (z) \dd z - 2 \int_{\mathbb{R}^2} \dd y
    \dd z\1_{z \leqslant x} \1_{y \geqslant z} \sigma (y)
    \sigma (z) + \int_{\mathbb{R}^3} \dd z \dd y_1 \dd y_2
    \1_{y_1 \geqslant z} \1_{y_2 \geqslant z} \sigma (y_1)
    \sigma (y_2) \sigma (z)\\
    &\hspace{15pt}= 1 - \int_x^{\infty} \sigma (z) \dd z - 2 \int_{\mathbb{R}^2} \dd
    y \dd z\1_{z \leqslant x} \1_{y \geqslant z} \sigma
    (y) \sigma (z) + \int_{\mathbb{R}^3} \dd z \dd y_1 \dd y_2
    \1_{y_1 \geqslant z} \1_{y_2 \geqslant z} \sigma (y_1)
    \sigma (y_2) \sigma (z),
  \end{align*}
  and the three-dimensional integral takes the value
  \[ \int_{\mathbb{R}^3} \dd z \dd y_1 \dd y_2 \1_{y_1
     \geqslant z} \1_{y_2 \geqslant z} \sigma (y_1) \sigma (y_2)
     \sigma (z) = \int_{\mathbb{R}^3} \dd z \dd y_1 \dd y_2
     \1_{z = \min \{ y_1, y_2, z \}} \sigma (y_1) \sigma (y_2) \sigma
     (z) = \frac{1}{3} \]
  by symmetry of the variables $z, y_1, y_2$. To compute the two-dimensional integral observe first that
  \begin{align*}
     ( \1_{z\leqslant x, z \leqslant y} + \1_{y\leqslant x, y < z}) = \1_{z \leqslant y} - \1_{x < z \leqslant y} + \1_{y < z} - \1_{x < y < z} = 1 - \1_{x <y} \1_{x < z},
  \end{align*}
   and integrating this against $\sigma(y) \sigma(z) \dd y \dd z$ and using the symmetry in $(y,z)$, we get
  \begin{equation*}
     \int_{\mathbb{R}^2} \dd y \dd z\1_{z
    \leqslant x} \1_{y \geqslant z} \sigma (y) \sigma (z) =
    \frac{1}{2} \left( 1 - \left( \int_x^{\infty} \sigma (z) \dd z
    \right)^2 \right) .
  \end{equation*}
  Plugging all this back into~(\ref{eq:KL pr2}) we have
  \begin{align*}
    &\lim_{L \rightarrow \infty} ((\rho^L \ast \rho^L \ast \Theta_x)^2 (x) -
    \langle (\rho^L \ast \rho^L \ast \Theta_x)^2, \sigma \rangle_{L^2
    (\mathbb{R})}) \\
    &\hspace{10pt} = \frac{1}{4} - \int_x^{\infty} \sigma (y) \dd y +  \left( \int_x^{\infty} \sigma (y) \dd y \right)^2
    - \left( 1 - \int_x^{\infty} \sigma (z) \dd z -  \left( 1 - \left( \int_x^{\infty} \sigma (z) \dd z
    \right)^2 \right) + \frac{1}{3} \right)\\
    &\hspace{10pt}= \frac{1}{4} - \frac{1}{3} = -
    \frac{1}{12},
  \end{align*}
  which concludes the proof.
\end{proof}

The following computation will be useful for controlling both $A_1^{L,N}$ and $C^{L,N}_1$, which is why we outsource it in a separate lemma.

\begin{lemma}\label{lem:Gxx}
   Define the kernel
   \[ G^{L, N}_{x, x'} (y_1) \assign \int_{\mathbb{R}} \Theta^L_{x'} (y_2)
     g_x^{L, N} (y_1, y_2) \dd y_2 . \]
    Then there exists $C>0$ such that for all $L\in \N$ there is $M(L) \in \N$ with
  \[
     \sup_{x, x' \in \R, N \geqslant M(L)} \| G^{L, N}_{x, x'} \|_{\dot{H}^{- 1} (\mathbb{R})} \leqslant C L^{- 1 / 2}.
  \]
\end{lemma}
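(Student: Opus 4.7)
The plan is to first collapse the $N$-dependence of $G^{L,N}_{x,x'}$ via the assumption $\rho^N \ast \rho^L = \rho^L$ for $N \geq M(L)$, and then to estimate the $\dot{H}^{-1}(\R)$-norm of the resulting $N$-independent expression by passing to its antiderivative and applying Plancherel.

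Substituting the definition of $g^{L,N}_x$ into $G^{L,N}_{x,x'}$, the $y_2$-integral produces $(\rho^N \ast \Theta^L_{x'})(z)$, which equals $\Theta^L_{x'}(z)$ because $\Theta^L_{x'} = \rho^L \ast \Theta_{x'}$ and $\rho^N \ast \rho^L = \rho^L$. The remaining convolution against $\rho^N$ in the variable $y_1$ is then applied to a product of two $L$-scale-mollified functions whose Fourier transform is supported in $|\xi| \leq 2CL$; possibly enlarging $M(L)$ so that $\hat\rho(\xi/N) \equiv 1$ on $[-2CL, 2CL]$, this convolution also acts as the identity. This yields the $N$-independent closed form
\[
   G^{L,N}_{x,x'}(y_1) = -\partial_{y_1}\Theta^L_x(y_1)\,\Theta^L_{x'}(y_1) - \rho^L_x(y_1)\,\Psi^L_{x'}(x) + (\rho^L \ast (\sigma \Psi^L_{x'}))(y_1),
\]
where $\Psi^L_{x'} \assign \rho^L \ast \Theta^L_{x'}$. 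A short computation using $-\partial \Theta^L_x = \rho^L_x - \rho^L\ast\sigma$ and Fubini then shows $\int_\R G^{L,N}_{x,x'}(y_1)\dd y_1 = 0$, so that the antiderivative $F(y_1) \assign \int_{-\infty}^{y_1} G^{L,N}_{x,x'}(z)\dd z$ vanishes at $\pm\infty$ and satisfies $\|G^{L,N}_{x,x'}\|_{\dot{H}^{-1}(\R)} = \|F\|_{L^2(\R)}$ by Plancherel.

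To obtain the $L^{-1/2}$ bound on $\|F\|_{L^2(\R)}$, I integrate by parts in the first term of $G^{L,N}_{x,x'}$ and use the identity $\Theta^L_x(y_1) + \int_{-\infty}^{y_1}\rho^L_x\dd z = \int_{-\infty}^{y_1}(\rho^L \ast \sigma)\dd z$ (a primitive of $\partial\Theta^L_x = -\rho^L_x + \rho^L\ast\sigma$) to rewrite $F$ as a finite sum of pieces, each of which carries either a mollification-error factor $(\mathrm{Id} - \rho^L\ast)$ applied to $\Theta^L_{x'}$, or a telescoped difference $\Psi^L_{x'}(y_1) - \Psi^L_{x'}(x)$ paired with the concentrated bump $\rho^L_x$. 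The scaling emerges from the key Fourier estimate
\[
   \|(\mathrm{Id} - \rho^L\ast)\Theta^L_{x'}\|_{L^2(\R)}^2 = \int_\R |(1-\hat\rho(\xi/L))\hat\rho(\xi/L)|^2 \, |\hat\Theta_{x'}(\xi)|^2 \, \dd\xi \lesssim \int_{\delta L}^\infty \frac{\dd\xi}{\xi^2} \lesssim L^{-1},
\]
uniformly in $x'$. This uses that the function $\xi \mapsto (1-\hat\rho(\xi/L))\hat\rho(\xi/L)$ is supported in $\delta L \leq |\xi| \leq CL$ (because $\hat\rho \equiv 1$ on a neighborhood of $0$), together with the uniform bound $|\hat\Theta_{x'}(\xi)| \leq C/|\xi|$ arising from $\hat\Theta_{x'}(\xi) = (\hat\sigma(\xi) - e^{-2\pi i x' \xi})/(2\pi i \xi)$. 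Combined with the fact that $\rho^L_x$ is an $L^1$-normalized bump of width $O(L^{-1})$, every piece of $F$ is then of $L^2$-norm $O(L^{-1/2})$, uniformly in $x, x'$.

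The main obstacle will be the bookkeeping of the cancellations: the individual summands of $F$ are only bounded in $L^\infty$, not in $L^2$ uniformly in $x, x'$ --- only the total sum is. The regrouping must therefore place the small mollification-error factors in exactly the right positions, and identifying the correct telescoping across the three terms of the closed-form expression for $G^{L,N}_{x,x'}$ is expected to be the most delicate step of the proof.
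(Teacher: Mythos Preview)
Your approach is correct but takes a genuinely different route from the paper's. Both are ultimately duality arguments---the identity $\|G\|_{\dot H^{-1}}=\|F\|_{L^2}$ for the antiderivative $F$ is just the statement that $\langle G,f\rangle=-\langle F,f'\rangle$---but the organization is quite different.

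The paper does not first collapse the $N$-dependence into a closed form. Instead it pairs $G^{L,N}_{x,x'}$ with a test function $f\in C^\infty_c$ and rewrites $\langle G^{L,N}_{x,x'},f\rangle_{L^2}$ as an expectation over independent random variables $R^L_i\sim\rho^L$, $R^N_i\sim\rho^N$, $Z\sim\sigma$ (a device borrowed from Funaki--Quastel). The hypothesis $\rho^N\ast\rho^L=\rho^L$ is then used to produce cancellations of the form $\mathbb E[\Theta^L_{x'}(x-R^L_1-R^N_3)-\Theta^L_{x'}(x-R^L_4)]=0$, after which every surviving term can be regrouped as an increment $f(a)-f(b)$ with $|a-b|$ of order $|R^L_i|$ or $|R^N_i|$; the elementary bound $|f(a)-f(b)|\le\|f'\|_{L^2}|a-b|^{1/2}$ together with $\mathbb E[|R^L|^{1/2}]\lesssim L^{-1/2}$ then gives the result directly.

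What each approach buys: your first step, eliminating $N$ completely to get an explicit three-term expression for $G^{L,N}_{x,x'}$, is cleaner than the paper's handling of the $R^N_i$'s. On the other hand, the paper's probabilistic regrouping sidesteps precisely the obstacle you flag at the end: once the pairing is written as a sum of increments of $f$, no individual piece of the antiderivative needs to lie in $L^2$, because the smallness comes from the increment in $f$ rather than from cancellations among pieces of $F$. Your highlighted Fourier estimate $\|(\mathrm{Id}-\rho^L\ast)\Theta^L_{x'}\|_{L^2}\lesssim L^{-1/2}$ is correct, but note that it does not by itself control the relevant antiderivatives (multiplying by $\rho^L_x$ and antidifferentiating can cost a factor $L^{1/2}$), so the careful telescoping you anticipate really is where the work lies in your route. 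A more robust way to close your argument is to observe that both $\partial F_1:=\rho^L_x(\Theta^L_{x'}-\Psi^L_{x'}(x))$ and $\partial F_2:=\rho^L\ast(\sigma\Psi^L_{x'})-(\rho^L\ast\sigma)\Theta^L_{x'}$ separately have vanishing integral, and then to exploit the $L^{-1}$-concentration of $\rho^L_x$ (resp.\ the fixed compact support of $\sigma$) directly in real space---this is closer in spirit to the paper's increment bound than to the Fourier mollification-error estimate.
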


\begin{proof}
  We argue by duality. For $f \in C^{\infty}_c$ we have
  \begin{align} \nonumber
    \langle G^{L, N}_{x, x'}, f \rangle_{L^2 (\mathbb{R})} & =
    \int_{\mathbb{R}} \dd z (\rho^L_x (z) - \langle \rho^L_z, \sigma
    \rangle_{L^2 (\mathbb{R})}) (\rho^N \ast f) (z) (\rho^N \ast
    \Theta^L_{x'}) (z)\\ \nonumber
    &\quad - \{ (\rho^L \ast f) (x) (\rho^L \ast \Theta^L_{x'}) (x) - \langle (\rho^L
    \ast f) (\rho^L \ast \Theta^L_{x'}), \sigma \rangle_{L^2 (\mathbb{R})}
    \}\\ \nonumber
    & = (\rho^L \ast ((\rho^N \ast f) (\rho^N \ast \Theta^L_{x'}))) (x) -
    \int_{\mathbb{R}} \dd z \sigma (z) (\rho^L \ast ((\rho^N \ast f)
    (\rho^N \ast \Theta^L_{x'}))) (z)\\ \nonumber
    &\quad - \left\{ (\rho^L \ast f) (x) (\rho^L \ast \Theta^L_{x'}) (x) -
    \int_{\mathbb{R}} \dd z \sigma (z) (\rho^L \ast f) (z) (\rho^L \ast
    \Theta^L_{x'}) (z) \right\}\\ \nonumber
    & =\mathbb{E} [f (x - R^L_1 - R^N_2) \Theta^L_{x'} (x - R^L_1 - R^N_3) - f
    (Z - R^L_1 - R^N_2) \Theta^L_{x'} (Z - R^L_1 - R^N_3)]\\
    &\quad -\mathbb{E} [f (x - R^L_1) \Theta^L_{x'} (x - R^L_4) - f (Z - R^L_1)
    \Theta^L_{x'} (Z - R^L_4)],
  \end{align}
  where the random variables $(R^L_1, R^N_2, R^N_3, R^L_4, Z)$ are independent
  and $R^L_i \sim \rho^L$, $R^N_i \sim \rho^N$, and $Z \sim \sigma$ (note that
  $\rho^L, \rho^N, \sigma$ are all probability densities). The observation
  that we can simplify the notation in this way is taken from~\cite{bib:funakiQuastel}. Note that by assumption $\rho^L \ast \rho^N = \rho^L$ for sufficiently large $N$, and therefore
  \[ \mathbb{E} [\Theta^L_{x'} (x - R^L_1 - R^N_3) - \Theta^L_{x'} (x -
     R^L_4)] = ((\rho^L \ast \rho^N) \ast \Theta^L_{x'}) (x) - (\rho^L \ast
     \Theta^L_{x'}) (x) = 0. \]
   Similarly $\mathbb{E} [f (Z)
  (\Theta^L_{x'} (Z - R^L_1 - R^N_3) - \Theta^L_{x'} (Z - R^L_4))] = 0$ by  the independence of $Z$ and $R^L_i, R^N_i$, and hence we can regroup
  \begin{align} \nonumber
    \langle G^{L, N}_{x, x'}, f \rangle_{L^2 (\mathbb{R})} & =\mathbb{E} [(f
    (x - R^L_1 - R^N_2) - f (x)) (\Theta^L_{x'} (x - R^L_1 - R^N_3) -
    \Theta^L_{x'} (x - R^L_4))]\\ \nonumber
    &\quad +\mathbb{E} [(f (x - R^L_1 - R^N_2) - f (x - R^L_1)) \Theta^L_{x'} (x -
    R^L_4)]\\ \nonumber
    &\quad -\mathbb{E} [(f (Z - R^L_1 - R^N_2) - f (Z)) (\Theta^L_{x'} (Z - R^L_1 -
    R^N_3) - \Theta^L_{x'} (Z - R^L_4))]\\
    &\quad -\mathbb{E} [(f (Z - R^L_1 - R^N_2) - f (Z - R^L_1)) \Theta^L_{x'} (Z -
    R^L_4)] .
  \end{align}
  Let us estimate for example the most complicated term
  \begin{align}\label{eq:A1-exp-bound} \nonumber
    & | \mathbb{E} [(f (Z - R^L_1 - R^N_2) - f (Z)) (\Theta^L_{x'} (Z - R^L_1 -
    R^N_3) - \Theta^L_{x'} (Z - R^L_4))] |\\ \nonumber
    &\hspace{20pt}= \left| \mathbb{E} \left[ \left( \int_Z^{Z - R^L_1 - R^N_2} f' (y)
    \dd y \right) (\Theta^L_{x'} (Z - R^L_1 - R^N_3) - \Theta^L_{x'} (Z -
    R^L_4)) \right] \right|\\
    &\hspace{20pt} \leqslant 2 \| f \|_{\dot{H}^1 (\mathbb{R})} \|
    \Theta^L_{x'} \|_{L^{\infty} (\mathbb{R})} | \mathbb{E} [| R^L_1 + R^N_2
    |^{1 / 2}] | \lesssim \| f \|_{\dot{H}^1 (\mathbb{R})} (L^{- 1 / 2} +
    N^{- 1 / 2}) .
  \end{align}
  The other terms can be controlled using the same arguments, and therefore we get
  \begin{equation}\label{eq:Gxx-bound} \langle G^{L, N}_{x, x'}, f \rangle_{L^2
    (\mathbb{R})} \lesssim \| f \|_{\dot{H}^1 (\mathbb{R})} (L^{- 1 / 2} +
    N^{- 1 / 2}) \lesssim \| f \|_{\dot{H}^1 (\mathbb{R})} L^{- 1 / 2},
  \end{equation}
  which yields $\| G^{L, N}_{x, x'} \|_{\dot{H}^{- 1} (\mathbb{R})} \lesssim
  L^{- 1 / 2}$ by the density of $C^{\infty}_c (\mathbb{R})$ in $\dot{H}^1
  (\mathbb{R})$.
\end{proof}

\begin{lemma}\label{lem:A1}
   We have $\lim_{L \rightarrow \infty} \limsup_{N \rightarrow \infty} A_1^{L, N} = 0$.
\end{lemma}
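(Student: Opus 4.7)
The strategy is to compute the covariance of the mean‑zero Wick‑contracted integrand $V(y_1,x) := W_1(g^{L,N}_x(y_1,\cdot))\diamond\phi^L_0(x)$ explicitly and then bound each piece with Lemma~\ref{lem:Gxx}. Using $\dD_y\phi^L_0(x) = \lambda\Theta^L_x(y)\phi^L_0(x)$ and two applications of the partial integration by parts from Lemma~\ref{lem:IBP} to $\E[W_1(g_1)W_1(g_2)\phi^L_0(x)\phi^L_0(\tilde x)]$, the contributions proportional to $\lambda G^{L,N}_{x,x}$ and $\lambda G^{L,N}_{\tilde x,\tilde x}$ hidden in the Wick contraction cancel exactly the unwanted cross terms, leaving
\[
\E[V(y_1,x)V(\tilde y_1,\tilde x)] = \E[\phi^L_0(x)\phi^L_0(\tilde x)]\Bigl(\langle g^{L,N}_x(y_1,\cdot),\,g^{L,N}_{\tilde x}(\tilde y_1,\cdot)\rangle_{L^2} + \lambda^2 G^{L,N}_{x,\tilde x}(y_1)\,G^{L,N}_{\tilde x,x}(\tilde y_1)\Bigr).
\]

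Applying Plancherel in the variable $y_1$ to the $\dot H^{-1}_{y_1}$-norm gives
\[
(A^{L,N}_1)^2 = \int\!\!\int\varphi(x)\varphi(\tilde x)\,\E[\phi^L_0(x)\phi^L_0(\tilde x)]\Bigl(\langle g^{L,N}_x, g^{L,N}_{\tilde x}\rangle_{L^2_{y_2}(\dot H^{-1}_{y_1})} + \lambda^2\langle G^{L,N}_{x,\tilde x},\,G^{L,N}_{\tilde x,x}\rangle_{\dot H^{-1}}\Bigr)\,\dd x\,\dd\tilde x.
\]
Since $\varphi$ is compactly supported and $\|\Theta^L_x\|_{L^2}$ is uniformly bounded for $x$ in compact sets, $\E[\phi^L_0(x)\phi^L_0(\tilde x)] = e^{\lambda^2\|\Theta^L_x+\Theta^L_{\tilde x}\|_{L^2}^2/2}$ is bounded uniformly in $L$ on $\mathrm{supp}\,\varphi\times\mathrm{supp}\,\varphi$. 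Two applications of Lemma~\ref{lem:Gxx} then dispose of the $G^{L,N}$-contribution at rate $O(L^{-1})$, uniformly for $N\geq M(L)$. Cauchy-Schwarz in $(x,\tilde x)$ reduces the $g^{L,N}$-contribution to the uniform bound $\sup_{x\in\mathrm{supp}\,\varphi}\|g^{L,N}_x\|_{L^2_{y_2}(\dot H^{-1}_{y_1})}\to 0$.

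For this remaining estimate, I dualize: $\|g^{L,N}_x\|_{L^2_{y_2}(\dot H^{-1}_{y_1})} = \sup\{\langle g^{L,N}_x,h\rangle : h\in L^2_{y_2}(\dot H^1_{y_1}),\,\|h\|\le 1\}$. Writing $g^{L,N}_x(y_1,y_2) = p_{\mathcal A_N} - p_{\mathcal B_N} - p_{\mathcal C} + p_{\mathcal D}$ as the alternating sum of joint densities of the four random pairs
\[
\mathcal A_N=(x-R^L-R^N_1,x-R^L-R^N_2),\ \ \mathcal B_N=(Z+R^L-R^N_1,Z+R^L-R^N_2),
\]
\[
\mathcal C=(x-R^L_1,x-R^L_2),\ \ \mathcal D=(Z+R^L_1,Z+R^L_2),
\]
with $R^L,R^L_1,R^L_2\sim\rho^L$, $R^N_1,R^N_2\sim\rho^N$, $Z\sim\sigma$ all independent, the pairing becomes the four-term expectation $\E[h(\mathcal A_N)-h(\mathcal B_N)-h(\mathcal C)+h(\mathcal D)]$. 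Exactly as in the proof of Lemma~\ref{lem:Gxx}, grouping it as $(\E[h(\mathcal A_N)]-\E[h(\mathcal C)]) - (\E[h(\mathcal B_N)]-\E[h(\mathcal D)])$ and using $\rho^N\ast\rho^L=\rho^L$ for $N\geq M(L)$ to match the marginals in each difference, the residual first-variable increments have size $\lesssim L^{-1}+N^{-1}$. The 1D Sobolev embedding $\dot H^1(\R)\hookrightarrow C^{1/2}(\R)$ gives $|h(a,y_2)-h(b,y_2)|\leq |a-b|^{1/2}\|h(\cdot,y_2)\|_{\dot H^1_{y_1}}$, and Cauchy-Schwarz in $y_2$ against $\|h\|_{L^2_{y_2}(\dot H^1_{y_1})}\leq 1$ yields the rate $O(L^{-1/2})$ after $N\to\infty$.

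The main obstacle is the core estimate on $\|g^{L,N}_x\|_{L^2_{y_2}(\dot H^{-1}_{y_1})}$: because the test function $h$ has only $L^2$-regularity in the second variable $y_2$, the telescoping of the four-term alternating sum must be carried out entirely within the first variable, which requires a careful coupling of the auxiliary random variables and leveraging $\rho^N\ast\rho^L=\rho^L$ in precisely the right places so that every remaining pointwise increment falls on a factor of $h$ that can absorb a $\dot H^1_{y_1}$-Sobolev bound.
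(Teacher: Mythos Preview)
Your decomposition of $(A_1^{L,N})^2$ into the two contributions (the $\langle g^{L,N}_x,g^{L,N}_{\tilde x}\rangle_{L^2_{y_2}(\dot H^{-1}_{y_1})}$ term and the $\langle G^{L,N}_{x,\tilde x},G^{L,N}_{\tilde x,x}\rangle_{\dot H^{-1}}$ term) is correct and coincides with the paper's split $A_{1,1}+A_{1,2}$, and your treatment of the second contribution via Lemma~\ref{lem:Gxx} is fine. The gap is in your handling of the first term.

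You propose to apply Cauchy--Schwarz in $(x,\tilde x)$ and reduce matters to $\sup_{x\in\operatorname{supp}\varphi}\|g^{L,N}_x\|_{L^2_{y_2}(\dot H^{-1}_{y_1})}\to 0$. This quantity does \emph{not} tend to zero. A direct Fourier computation gives, after sending $N\to\infty$ and substituting $\xi_i=L\eta_i$ (the norm is scale invariant under this substitution),
\[
\|g^{L,\infty}_x\|^2_{L^2_{y_2}(\dot H^{-1}_{y_1})}
=\int_{\R^2}\frac{|\hat\rho(\eta_1+\eta_2)-\hat\rho(\eta_1)\hat\rho(\eta_2)|^2}{|2\pi\eta_1|^{2}}
\bigl|e^{2\pi i L(\eta_1+\eta_2)x}-\hat\sigma(L(\eta_1+\eta_2))\bigr|^2\dd\eta_1\dd\eta_2,
\]
which by Riemann--Lebesgue converges as $L\to\infty$ to the strictly positive, $L$-independent constant $\int_{\R^2}|2\pi\eta_1|^{-2}|\hat\rho(\eta_1+\eta_2)-\hat\rho(\eta_1)\hat\rho(\eta_2)|^2\dd\eta$. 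The flaw in your coupling argument is exactly where this shows up: once you match the second coordinates (say by setting $R^L_2:=R^L+R^N_2$), the common second coordinate $Y_2=x-R^L-R^N_2$ has density $\rho^L(x-\cdot)$. Passing from $\E\bigl[|R^L+R^N_1-R^L_1|^{1/2}\|h(\cdot,Y_2)\|_{\dot H^1_{y_1}}\bigr]$ to the constraint $\|h\|_{L^2_{y_2}(\dot H^1_{y_1})}\le 1$ requires a Cauchy--Schwarz in $y_2$ against this density, which costs a factor $\|\rho^L\|_{L^\infty}^{1/2}\simeq L^{1/2}$ and exactly cancels the $L^{-1/2}$ gained from the first-variable increment.

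The paper circumvents this by \emph{not} applying Cauchy--Schwarz in $(x,x')$. It bounds $\|g^{L,N}_x(\cdot,y_2)\|_{\dot H^{-1}}$ for each fixed $y_2$ by a sum of terms of the schematic form $\E[|R^L|^{1/2}\rho^L(x-y_2-R^L)]$, i.e.\ a small prefactor times an $(x,y_2)$-dependent probability density. For fixed $x$ such a density has $L^2_{y_2}$ norm of order $L^{1/2}$, which is the obstruction you encounter; but integrating in $x$ first against $V^L(x):=|\lambda\varphi(x)|\,\E[|\phi^L_0(x)|^2]^{1/2}$ produces $(\rho^L\ast V^L)(y_2+R^L)$, whose $L^2_{y_2}$ norm is bounded by $\|\rho^L\|_{L^1}\|V^L\|_{L^2}\lesssim 1$ uniformly in $L$. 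In other words, the integration over $x$ against the compactly supported weight $\varphi$ must be used to absorb the concentrating $y_2$-density before any Cauchy--Schwarz in $y_2$; discarding the $(x,x')$-structure at the outset loses precisely the cancellation that makes $A_{1,1}\to 0$.
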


\begin{proof}
  We expand the squared $\dot{H}^{- 1}$ norm as
  \begin{align*}
    (A_1^{L, N})^2 & =\mathbb{E} \left[ \left\| \int_{\mathbb{R}} \varphi (x)
    W_1 (g_x^{L, N} (y_1, \cdot)) \diamond \phi_0^L (x) \dd x
    \right\|_{\dot{H}^{- 1}_{y_1} (\mathbb{R})}^2 \right]\\
    & = \int_{\mathbb{R}^2} \varphi (x) \varphi (x') \mathbb{E} \left[ \langle
    \phi_0^L (x) \diamond W_1 (g_x^{L, N} (y_1, \cdot)), \phi_0^L (x')
    \diamond W_1 (g_{x'}^{L, N} (y_1, \cdot)) \rangle_{\dot{H}^{- 1}_{y_1}
    (\mathbb{R})} \right] \dd x \dd x' .
  \end{align*}
  Integrating by parts the $W_1$ terms and taking into account the cancellations due to the partial Wick contractions, we get
  \begin{align*}
    & \mathbb{E} \left[ \langle \phi_0^L (x) \diamond W_1 (g_x^{L, N} (y_1,
    \cdot)), \phi_0^L (x') \diamond W_1 (g_{x'}^{L, N} (y_1, \cdot))
    \rangle_{\dot{H}^{- 1}_{y_1} (\mathbb{R})} \right]\\
    &\hspace{50pt}=\mathbb{E} [\phi_0^L (x) \phi_0^L (x')] \int_{\mathbb{R}} \langle
    g_x^{L, N} (y_1, y_2), g_{x'}^{L, N} (y_1, y_2) \rangle_{\dot{H}^{-
    1}_{y_1} (\mathbb{R})} \dd y_2\\
    &\hspace{50pt} \quad + \int_{\mathbb{R}^2} \mathbb{E} [(\dD_{y_3} \phi_0^L (x))
    (\dD_{y_2} \phi_0^L (x'))] \langle g_x^{L, N} (y_1, y_2), g_{x'}^{L, N}
    (y_1, y_3) \rangle_{\dot{H}^{- 1}_{y_1} (\mathbb{R})} \dd y_2 \dd
    y_3
  \end{align*}
  The second term can be written as
  \begin{align*}
    &\int_{\mathbb{R}^2} \mathbb{E} [(\dD_{y_3} \phi_0^L (x))
    (\dD_{y_2} \phi_0^L (x'))] \langle g_x^{L, N} (y_1, y_2), g_{x'}^{L, N}
    (y_1, y_3) \rangle_{\dot{H}^{- 1}_{y_1} (\mathbb{R})} \dd y_2 \dd
    y_3\\
    &\hspace{50pt} = \lambda^2 \mathbb{E} [\phi^L_0 (x) \phi^L_0 (x')] \int_{\mathbb{R}^2}
    \Theta^L_x (y_3) \Theta^L_{x'} (y_2) \langle g_x^{L, N} (y_1, y_2),
    g_{x'}^{L, N} (y_1, y_3) \rangle_{\dot{H}^{- 1}_{y_1} (\mathbb{R})}
    \dd y_2 \dd y_3,
  \end{align*}
  so letting $V^L (x, x') \assign \lambda^2 \varphi (x) \varphi (x')
  \mathbb{E} [_{} \phi_0^L (x) \phi_0^L (x')]$ we have
  \begin{align}    \label{eq:A1-decomposition} \nonumber 
    (A_1^{L, N})^2 & = \int_{\mathbb{R}^2} \dd x \dd x' V^L (x, x')
    \int_{\mathbb{R}} \langle g_x^{L, N} (y_1, y_2), g_{x'}^{L, N} (y_1, y_2)
    \rangle_{\dot{H}^{- 1}_{y_1} (\mathbb{R})} \dd y_2\\ \nonumber
    & \quad + \int_{\mathbb{R}^2} \dd x \dd x' V^L (x, x') \int_{\mathbb{R}^2}
    \Theta^L_x (y_3) \Theta^L_{x'} (y_2) \langle g_x^{L, N} (y_1, y_2),
    g_{x'}^{L, N} (y_1, y_3) \rangle_{\dot{H}^{- 1}_{y_1} (\mathbb{R})}
    \dd y_2 \dd y_3\\
    & = : A_{1, 1} + A_{1, 2} .
  \end{align}
  Let us consider first $A_{1, 2} = \int_{\mathbb{R}^2} V^L (x, x') \langle
  G^{L, N}_{x, x'}, G^{L, N}_{x', x} \rangle_{\dot{H}^{- 1} (\mathbb{R})}
  \dd x \dd x'$, which according to Lemma~\ref{lem:Gxx} can be bounded by
  \[
     A_{1, 2} \leqslant \int_{\mathbb{R}^2} V^L (x, x') \| G^{L, N}_{x, x'}
     \|_{\dot{H}^{- 1} (\mathbb{R})} \| G^{L, N}_{x', x} \|_{\dot{H}^{- 1}
     (\mathbb{R})} \dd x \dd x'  \lesssim L^{-1} \int_{\mathbb{R}^2} V^L (x, x') \dd x \dd x' \lesssim L^{-1}
  \]
  for all large $N$. We continue by estimating the term $A_{1, 1}$ in~(\ref{eq:A1-decomposition})
  which is bounded by
  \begin{equation}
    \label{eq:A11} A_{1, 1} \leqslant \int_{\mathbb{R}^2} \dd x \dd x'
    | V^L (x, x') | \int_{\mathbb{R}} \| g_x^{L, N} (\cdummy, y_2)
    \|_{\dot{H}^{- 1} (\mathbb{R})} \| g_{x'}^{L, N} (\cdummy, y_2)
    \|_{\dot{H}^{- 1} (\mathbb{R})} \dd y_2 .
  \end{equation}
  To treat the $\dot{H}^{- 1} (\mathbb{R})$ norms we argue again by duality, as in the proof of Lemma~\ref{lem:Gxx}.
  Let therefore $f \in C^{\infty}_c (\mathbb{R})$ and consider
  \begin{align}\label{eq:A11-expectation} \nonumber
    & \langle g_x^{L, N} (\cdummy, y_2), f \rangle_{L^2 (\mathbb{R})}\\ \nonumber
    &\hspace{15pt} = \int_{\mathbb{R}} \dd z (\rho^L_x (z) - \langle \rho^L_z, \sigma
    \rangle_{L^2 (\mathbb{R})}) (\rho^N \ast f) (z) \rho^N_z (y_2) - ((\rho^L
    \ast f) (x) \rho^L_x (y_2) - \langle (\rho^L \ast f) \rho^L_{y_2}, \sigma
    \rangle_{L^2 (\mathbb{R})})\\ \nonumber
    &\hspace{15pt} =\mathbb{E} [f (x - R^L_1 - R^N_2) \rho^N (x - y_2 - R^L_1) - f (y_2 -
    R^N_1 - R^N_2) \sigma (y_2 - R^N_1 - R^L_3)]\\
     &\hspace{15pt} \quad -\mathbb{E} [f (x - R^L_1) \rho^L (x - y_2) -
    f (y_2 - R^L_1 - R^L_2) \sigma (y_2 - R^L_1)],
  \end{align}
  where $R^L_i, R^N_i, Z$ are independent random variables as above. Now
  observe that by our assumptions on $\rho$
  \[ \mathbb{E} [\rho^N (x - y_2 - R^L_1) - \rho^L (x - y_2)] = (\rho^L \ast
     \rho^N) (x - y_2) - \rho^L (x - y_2) = 0 \]
  if $N$ is large enough, and similarly $\mathbb{E} [\sigma (y_2 - R^N_1 -
  R^L_3)] =\mathbb{E} [\sigma (y_2 - R^L_1)]$. So for large $N$ we can
  decompose the expectations in~(\ref{eq:A11-expectation}) as
  \begin{align*}
    \langle g_x^{L, N} (\cdummy, y_2), f \rangle_{L^2 (\mathbb{R})}
    & =\mathbb{E} [(f (x - R^L_1 - R^N_2) - f (x - R^L_1)) \rho^N (x - y_2 -
    R^L_1)]\\
    &\quad +\mathbb{E} [(f (x - R^L_1) - f (x)) (\rho^N (x - y_2 - R^L_1) - \rho^L
    (x - y_2))]\\
    &\quad -\mathbb{E} [(f (y_2 - R^N_1 - R^N_2) - f (y_2 - R^L_1 - R^L_2)) \sigma
    (y_2 - R^N_1 - R^L_3)]\\
    &\quad -\mathbb{E} [(f (y_2 - R^L_1 - R^L_2) - f (y_2)) (\sigma (y_2 - R^N_1 -
    R^L_3) - \sigma (y_2 - R^L_1))] .
  \end{align*}
  Bounding each term individually as in the proof of Lemma~\ref{lem:Gxx} we get
  \begin{align*}
    \langle g_x^{L, N} (\cdummy, y_2), f \rangle_{L^2 (\mathbb{R})} & \leqslant
    \| f \|_{\dot{H}^1 (\mathbb{R})} \times \Big(\mathbb{E} [| R^N_2 |^{1 / 2}
    \rho^N (x - y_2 - R^L_1)] \nobracket\\
    &\hspace{65pt} +\mathbb{E} [| R^L_1 |^{1 / 2} (\rho^N (x - y_2 - R^L_1) + \rho^L (x -
    y_2))]\\
    &\hspace{65pt} +\mathbb{E} [| R^N_1 + R^N_2 + R^L_1 + R^L_2 |^{1 / 2} \sigma (y_2 -
    R^N_1 - R^L_3)]\\
    &\hspace{65pt}  +\mathbb{E} [| R^L_1 + R^L_2 |^{1 / 2} (\sigma (y_2 - R^N_1 -
    R^L_3) + \sigma (y_2 - R^L_1))]\Big),
  \end{align*}
  which yields
  \begin{align*}
    \| g_x^{L, N} (\cdummy, y_2) \|_{\dot{H}^{- 1} (\mathbb{R})} &  \leqslant
    \mathbb{E} [| R^N_2 |^{1 / 2} \rho^N (x - y_2 - R^L_1)] +\mathbb{E} [|
    R^L_1 |^{1 / 2} (\rho^N (x - y_2 - R^L_1) + \rho^L (x - y_2))]\\
    &\quad +\mathbb{E} [| R^N_1 + R^N_2 + R^L_1 + R^L_2 |^{1 / 2} \sigma (y_2 -
    R^N_1 - R^L_3)] \\
    &\quad  +\mathbb{E} [| R^L_1 + R^L_2 |^{1 / 2} (\sigma (y_2 -
    R^N_1 - R^L_3) + \sigma (y_2 - R^L_1))] .
  \end{align*}
  By the same computation we get a similar bound for $\| g_{x'}^{L, N}
  (\cdummy, y_2) \|_{\dot{H}^{- 1} (\mathbb{R})}$, and plugging these back
  into~(\ref{eq:A11}) we generate a number of products between different
  expectations. Let us treat three prototypical cases: Writing $V^L (x)
  \assign | \lambda \varphi (x) | \mathbb{E} [_{} | \phi_0^L (x) |^2]^{1 /
  2}$, we have
  \begin{align*}
    & \int_{\mathbb{R}^2} \dd x \dd x' | V^L (x, x') | \int_{\mathbb{R}}
    \mathbb{E} [| R^L_1 |^{1 / 2} \rho^N (x - y_2 - R^L_1)] \mathbb{E} [|
    R^L_1 |^{1 / 2} \rho^N (x' - y_2 - R^L_1)] \dd y_2\\
    &\hspace{20pt} \leqslant \int_{\mathbb{R}^2} \dd x \dd x' V^L (x) V^L (x')
    \int_{\mathbb{R}} \mathbb{E} [| R^L_1 |^{1 / 2} \rho^N (x - y_2 -
    R^L_1)] \mathbb{E} [| R^L_1 |^{1 / 2} \rho^N (x' - y_2 - R^L_1)] \dd
    y_2\\
    &\hspace{20pt} = \int_{\mathbb{R}} \mathbb{E} [| R^L_1 |^{1 / 2} (\rho^N \ast V^L) (y_2
    + R^L_1)] \mathbb{E} [| R^L_1 |^{1 / 2} (\rho^N \ast V^L) (y_2 + R^L_1)]
    \dd y_2\\
    &\hspace{20pt} =\mathbb{E} \left[ | R^L_1 |^{1 / 2} | R^L_2 |^{1 / 2} \int_{\mathbb{R}}
    (\rho^N \ast V^L) (y_2 + R^L_1) (\rho^N \ast V^L) (y_2 + R^L_2) \dd y_2
    \right]\\
    &\hspace{20pt} \leqslant \mathbb{E} [| R^L_1 |^{1 / 2} | R^L_2 |^{1 / 2} \| (\rho^N \ast
    V^L) \|_{L^2 (\mathbb{R})}^2] \lesssim L^{- 1} (\| \rho^N \|_{L^1
    (\mathbb{R})} \| V^L \|_{L^2 (\mathbb{R})})^2 \lesssim L^{- 1},
  \end{align*}
  where we introduced a new independent copy $R^L_2$ of $R^L_1$ which is a
  trick that we will apply several times in the following. Another situation
  occurs if only one of the two expectations depends on $x$ (respectively
  $x'$), for example
  \begin{align*}
    & \int_{\mathbb{R}^2} \dd x \dd x' V^L (x) V^L (x')
    \int_{\mathbb{R}} \mathbb{E} [| R^N_2 |^{1 / 2} \rho^N (x - y_2 -
    R^L_1)] \mathbb{E} [| R^L_1 + R^L_2 |^{1 / 2} \sigma (y_2 - R^N_1 -
    R^L_3)] \dd y_2\\
    &\hspace{20pt} = \| V^L \|_{L^1 (\mathbb{R})} \int_{\mathbb{R}} \mathbb{E} [| R^N_2
    |^{1 / 2} (\rho^N \ast V^L) (y_2 + R^L_1)] \mathbb{E} [| R^L_1 + R^L_2
    |^{1 / 2} \sigma (y_2 - R^N_1 - R^L_3)] \dd y_2\\
    &\hspace{20pt} \simeq \mathbb{E} \left[ | R^N_2 |^{1 / 2} | R^L_4 + R^L_5 |^{1 / 2}
    \int_{\mathbb{R}} (\rho^N \ast V^L) (y_2 + R^L_1) \sigma (y_2 - R^N_6 -
    R^L_7) \dd y_2 \right]\\
   &\hspace{20pt}  \leqslant \mathbb{E} [| R^N_2 |^{1 / 2} | R^L_4 + R^L_5 |^{1 / 2} \|
    \rho^N \ast V^L \|_{L^2 (\mathbb{R})} \| \sigma \|_{L^2 (\mathbb{R})}]\\
    &\hspace{20pt} \lesssim N^{- 1 / 2} L^{- 1 / 2} \| \rho^N \|_{L^1 (\mathbb{R})} \| V^L
    \|_{L^2 (\mathbb{R})} \| \sigma \|_{L^2 (\mathbb{R})} \lesssim L^{- 1} .
  \end{align*}
  Finally, we have to handle the case where none of the expectations depend on $x$ or $x'$, for example
  \begin{align*}
    & \int_{\mathbb{R}^2} \dd x \dd x' V^L (x) V^L (x')
    \int_{\mathbb{R}} \mathbb{E} [| R^L_1 + R^L_2 |^{1 / 2} \sigma (y_2 -
    R^N_1 - R^L_3)] \mathbb{E} [| R^L_1 + R^L_2 |^{1 / 2} \sigma (y_2 -
    R^L_1)] \dd y_2\\
    &\hspace{20pt}= \| V^L \|_{L^1 (\mathbb{R})}^2 \mathbb{E} \left[ | R^L_1 + R^L_2 |^{1
    / 2} | R^L_4 + R^L_5 |^{1 / 2} \int_{\mathbb{R}} \sigma (y_2 - R^N_1 -
    R^L_3) \sigma (y_2 - R^L_4) \dd y_2 \right]\\
    &\hspace{20pt}\lesssim \mathbb{E} [| R^L_1 + R^L_2 |^{1 / 2} | R^L_4 + R^L_5 |^{1 / 2}
    \| \sigma \|_{L^2 (\mathbb{R})}^2] \lesssim L^{- 1} .
  \end{align*}
  In conclusion also $A_{1, 1}$ vanishes as first $N \rightarrow \infty$ and
  then $L \rightarrow \infty$, and this concludes the proof.
\end{proof}

\begin{lemma}\label{lem:C1}
  We have $\lim_{L \rightarrow \infty} \lim_{N \rightarrow \infty} C_1^{L, N} = 0$.
\end{lemma}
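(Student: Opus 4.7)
The plan is to reduce $C_1^{L,N}$ directly to the $\dot{H}^{-1}$ estimate on the kernel $G^{L,N}_{x,x'}$ established in Lemma~\ref{lem:Gxx}. The starting observation is that the kernel $g_x^{L,N}(y_1,y_2)$ is manifestly symmetric in its two arguments, so that the inner integral in the definition of $C_1^{L,N}$ coincides with $G^{L,N}_{x,x}(y_2)$. This rewrites
\[
   C_1^{L,N} = \E\left[ \Big\| \int_\R \varphi(x)\, \phi_0^L(x)\, G^{L,N}_{x,x} \dd x \Big\|_{\dot{H}^{-1}(\R)}^2 \right]^{1/2}.
\]

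Next I would apply the triangle inequality for the Bochner integral in $\dot{H}^{-1}(\R)$ followed by Minkowski's integral inequality in $L^2(\P)$, which yields
\[
   C_1^{L,N} \leq \int_\R |\varphi(x)|\, \E[|\phi_0^L(x)|^2]^{1/2}\, \|G^{L,N}_{x,x}\|_{\dot{H}^{-1}(\R)} \dd x.
\]
Lemma~\ref{lem:Gxx} already provides $\|G^{L,N}_{x,x}\|_{\dot{H}^{-1}(\R)} \leq C L^{-1/2}$ uniformly in $x$ for all $N \geq M(L)$, so it remains only to check that the prefactor $\int_\R |\varphi(x)| \E[|\phi_0^L(x)|^2]^{1/2} \dd x$ is bounded uniformly in $L$. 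Since $u_0$ is a white noise, $u_0(\Theta^L_x)$ is centered Gaussian with variance $\|\Theta^L_x\|_{L^2(\R)}^2 \leq \|\rho^L\|_{L^1(\R)}^2 \|\Theta_x\|_{L^2(\R)}^2 = \|\Theta_x\|_{L^2(\R)}^2$ by Young's inequality, and $\|\Theta_x\|_{L^2(\R)}$ is locally bounded in $x$. Since $\varphi$ has compact support, $\E[|\phi_0^L(x)|^2]^{1/2} = \exp(\lambda^2 \|\Theta^L_x\|_{L^2(\R)}^2)$ is uniformly bounded in $L$ and $x \in \mathrm{supp}(\varphi)$.

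Combining everything yields $C_1^{L,N} \lesssim L^{-1/2}$ for all $N \geq M(L)$, from which the conclusion follows by letting $N \to \infty$ and then $L \to \infty$. I do not expect any serious obstacle here: the essential difficulty has already been absorbed into Lemma~\ref{lem:Gxx}, and the structural difference with the analysis of $A_1^{L,N}$ is that here the deterministic kernel $g_x^{L,N}$ comes already paired with one factor of $\Theta^L_x$, so that a single application of the $\dot{H}^{-1}$ bound suffices and no further decomposition of the expectation is needed.
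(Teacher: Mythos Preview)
Your proof is correct and follows essentially the same route as the paper: both arguments recognize that the inner integral in the definition of $C_1^{L,N}$ is precisely $G^{L,N}_{x,x}$ and then invoke Lemma~\ref{lem:Gxx} to get the $L^{-1/2}$ decay. The only cosmetic difference is that the paper first expands $(C_1^{L,N})^2$ as the double integral $\int_{\R^2} V^L(x,x')\,\langle G^{L,N}_{x,x}, G^{L,N}_{x',x'}\rangle_{\dot H^{-1}}\,\dd x\,\dd x'$ and then applies Cauchy--Schwarz, whereas you bound the norm before squaring via Minkowski; both yield $C_1^{L,N}\lesssim L^{-1/2}$ for $N\ge M(L)$. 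One tiny quibble: $\rho$ need not be nonnegative, so $\|\rho^L\|_{L^1}=\|\rho\|_{L^1}$ is a fixed constant but not necessarily equal to $1$; this does not affect your uniform bound on $\E[|\phi_0^L(x)|^2]^{1/2}$.
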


\begin{proof}
  Recall that
  \begin{align*}
    (C_1^{L, N})^2 & =\mathbb{E} \left[ \left\| \int_{\mathbb{R}} \varphi (x)
    \phi^L_0 (x) \int_{\mathbb{R}} g_x^{L, N} (y_1, y_2) \Theta^L_x (y_1)
    \dd y_1 \dd x \right\|_{\dot{H}^{- 1}_{y_2} (\mathbb{R})}^2
    \right]\\
    & = \int_{\mathbb{R}^2} V^L (x, x') \langle G^{L, N}_{x, x}, G^{L, N}_{x',
    x'} \rangle_{\dot{H}^{- 1} (\mathbb{R})} \dd x \dd x' .
  \end{align*}
  So by Lemma~\ref{lem:Gxx} we get directly $(C_1^{L, N})^2 \lesssim L^{- 1}$ for all large $N$, from where the convergence immediately follows.
\end{proof}

Lemma~\ref{lem:remainder} now follows by combining Lemma~\ref{lem:rLN-decomposition}, Lemma~\ref{lem:A1}, and Lemma~\ref{lem:C1}.

\subsubsection{Proof of Lemma~\ref{lem:constant process}}

Recall that
\[
  Q^L_t \assign \int_0^t \{ - \langle (u^L_s)^2 - \| \rho^L \|_{L^2(\R)}^2, \sigma
  \rangle_{L^2(\R)} + \| \rho^L \ast \sigma \|_{L^2(\R)}^2 + \lambda^{- 1} u_s (\rho^L
  \ast \partial_z \sigma) \} \dd s.
\]
By Corollary~\ref{cor:kpz drift hoelder} the first term on the right hand side converges in $L^p(\Omega; C^{3/4-}([0,T],\R))$ to $- \int_0^\cdot u^{\diamond 2}_s \dd s (\sigma)$ whenever $p\geqslant 1$ and $T > 0$. The convergence of the remaining terms is obvious, and overall we get
\[
  \lim_{L \to \infty} Q^L_t = Q_t \assign - \int_0^t u^{\diamond 2}_s \dd s (\sigma) + \| \sigma \|_{L^2(\R)}^2 t + \lambda^{-1} \int_0^t u_s(\partial_z \sigma) \dd s,
\]
where the convergence takes place in $L^p(\Omega; C^{3/4-}([0,T],\R))$.

\appendix
\section{The periodic case}\label{app:periodic}

For the periodic equation described Section~\ref{sec:periodic} most of the analysis works in the same way. The It\^o trick and the Kipnis-Varadhan inequality are shown using exactly the same arguments, and also the Gaussian analysis of Section~\ref{sec:gaussian} works completely analogously. We only have to replace all function spaces over $\R^n$ by the corresponding spaces over $\T^n$, say $L^2(\R^n)$ by $L^2(\T^n)$. The construction of the Burgers nonlinearity and the proof of its time-regularity also carry over to the periodic setting, although we have to replace the integrals over $\R^2$ in Fourier space by sums over $\Z^2$; but since those sums can be estimated by the corresponding integrals, we get the same bounds.

The first significant difference is in the construction of the integral. As discussed in Section~\ref{sec:map-SHE} any integral $Iu$ of $u \in \CS'(\R)$ is determined uniquely by its derivative $u$ and the value $Iu(\sigma)$ for some $\sigma \in \CS(\R)$ with $\int_\R \sigma(x) \dd x \neq 0$. The same is true on the circle, and here there is a canonical candidate for the function $\sigma$, namely the constant function $1$. So let $(u,\mathcal A) \in \CQ(W,\eta)$ be a pair of controlled processes, where $W$ is a periodic space-time white noise and $\eta$ a periodic space white noise. Let $\rho \in \CS (\mathbb{R})$ be an even function with $\hat{\rho} \in C^\infty_c (\mathbb{R})$ and such that $\hat{\rho} \equiv 1$ on a neighborhood of $0$ and define
\begin{equation}\label{eq:uL-def-periodic}
   u^L_t \assign \CF^{-1}_\T(\hat{\rho}(L^{-1}\cdot) \CF_\T u_t ) = \bar{\rho}^L \ast u_t, \qquad t \geqslant 0
\end{equation}
where $\CF_\T u(k) \assign \int_\T e^{2 \pi i k x} u(x) \dd x$ respectively $\CF^{-1}_\T \psi(x) \assign \sum_{k \in \Z} e^{2\pi i k x} \psi(k)$ denote the Fourier transform (respectively inverse Fourier transform) on the torus, $u \ast v(x) \assign \int_\T u(x-y) v(y) \dd y$ is the convolution on the torus, and $\bar{\rho}^L \assign \sum_{k \in \Z} L \rho(L(\cdot + k))$ is the periodization of $\rho^L \assign L \rho(L \cdot)$. For the last identity in~\eqref{eq:uL-def-periodic} we applied Poisson summation, see for example~\cite[Lemma~6]{bib:ebp}. We then integrate $u^L$ by setting
\[
   h^L_t \assign \CF^{-1}_\T (\CF_\T{\Theta} \CF_\T(u^L_t)) = \Theta \ast u^L_t = (\Theta \ast \bar{\rho}^L) \ast u_t =: \Theta^L \ast u_t, \qquad t \geqslant 0,
\]
where $\CF_\T{\Theta}(k) = \1_{k \neq 0} (2\pi i k)^{-1}$, which corresponds to
\[
   \Theta(x) = \1_{[-\frac{1}{2},0)}(x) (-x - \frac{1}{2}) + \1_{[0,\frac{1}{2})}(x) (-x + \frac{1}{2}) = - x - \frac{1}{2} + \1_{[0,\frac{1}{2})}(x), \qquad x \in [-\frac{1}{2}, \frac{1}{2}),
\]
or equivalently $\Theta(x) = \frac{1}{2} - x$ for $x \in [0,1)$. From the representation as Fourier multiplier it is obvious that $\partial_x (\Theta \ast u) = \Pi_0 u$, and since we assumed that $u^L_t = \Pi_0 u^L_t$ for all $t \geqslant 0$ we get $\partial_x h^L = u^L$. Writing $(\Theta \ast \bar{\rho}^L) \ast u_t(x) = u_t(\Theta^L_x)$ for $\Theta^L_x(y) \assign (\Theta \ast \bar{\rho}^L)(x - y)$, we get from the fact that $u$ is a strong stationary solution of the periodic Burgers equation that
\begin{align*}
   \dd h^L_t (x) & = u_t (\Delta_z \Theta^L_x) \dd t + \dd  \mathcal{A}_t (\Theta^L_x) + \sqrt{2} \dd W_t (-\partial_z \Theta^L_x) \\
   & = \Delta_x h^L_t(x) + \dd  \mathcal{A}_t (\Theta^L_x) + \sqrt{2} \dd W_t (\partial_x \Theta^L_x)
\end{align*}
and $\dd [h^L(x)]_t = 2 \| \partial_x \Theta^L_x \|_{L^2(\T)}^2 \dd t$. From the expression $\Theta(x) =  \frac{1}{2} - x$ for $x \in [0,1)$ we see that $\partial_x \Theta_x = \delta - 1$, where $\delta$ denotes the Dirac delta, and therefore $\partial_x \Theta^L_x = \bar{\rho}^L_x - 1$ for $\bar{\rho}^L_x(y) = \bar{\rho}^L(x - y)$. So setting $\phi^L_t (x) \assign e^{\lambda h^L_t (x)}$ we have
\begin{align*}
  \dd \phi^L_t (x) & = \phi^L_t (x) \left( \lambda \dd h^L_t (x) +
  \frac{1}{2} \lambda^2 \dd [ h^L (x) ]_t \right)\\
  & = \lambda \phi^L_t (x) \left( \Delta_x h^L_t(x) + \dd  \mathcal{A}_t (\Theta^L_x) + \sqrt{2} \dd W_t (\partial_x \Theta^L_x) + \lambda \| \bar{\rho}^L_x - 1 \|_{L^2(\T)}^2 \dd t \right),
\end{align*}
and since $\lambda \phi^L_t (x) \Delta_x h^L_t (x) = \Delta_x \phi^L_t (x) - \lambda^2\phi^L_t (x) (\partial_x h^L_t (x))^2$ we get
\begin{align*}
  \dd \phi^L_t (x) & = \Delta_x \phi^L_t (x) \dd t + \sqrt{2} \lambda \phi^L_t (x) \dd W_t (\partial_x \Theta^L_x) \\
  &\quad +\lambda^2 \phi^L_t
  (x) (\lambda^{- 1} \dd \mathcal{A}_t (\Theta^L_x) - ((u_t^L (x))^2 - \| \bar{\rho}^L_x - 1 \|_{L^2(\T)}^2) \dd t) \\
  & = \Delta_x \phi^L_t (x) \dd t + \sqrt{2} \lambda \phi^L_t (x) \dd W_t (\bar{\rho}^L_x) + \lambda^2 \dd R^L_t(x) + \lambda^2 K^L \phi^L_t(x) \dd t + \lambda^2 \phi^L_t(x) \dd Q^L_t \\
  &\quad - \sqrt{2} \lambda \phi^L_t(x) \dd W_t(1) - 2\lambda^2 \phi^L_t(x) \dd t,
\end{align*}
where we expanded the $L^2(\T)$ norm and defined
\[ R^L_t (x) \assign \int_0^t \phi^L_s (x) \{ \lambda^{- 1} \dd
   \mathcal{A}_s (\Theta^L_x) - \Pi_0((u_s^L (x))^2) \dd s - K^L \dd s \} \]
for a constant $K^L$ to be determined, and
\[
  Q^L_t \assign \int_0^t \{ - \langle (u^L_s)^2 - \| \bar{\rho}^L \|_{L^2(\R)}^2, 1
  \rangle_{L^2(\T)} + 1 \} \dd s.
\]
From here on the proof is completely analogous to the non-periodic setting provided that we establish the following three lemmas.

\begin{lemma}\label{lem:remainder-periodic}
   We have for all $T > 0$, $p > 2$ and all $\varphi \in
  C^{\infty} (\mathbb{T})$
  \[ \lim_{L \rightarrow \infty} (\mathbb{E} [\| R^L (\varphi) \|_{p -
     \tmop{var} ; [0, T]}^2] +\mathbb{E} [\sup_{t \leqslant T} | R^L_t
     (\varphi) |^2]) = 0. \]
\end{lemma}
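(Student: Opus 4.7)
The plan is to transplant the argument from Section~\ref{sec:convergence} to the torus. As in the non-periodic case, we introduce the auxiliary process
\[
  R^{L,N}_t(x) \assign \int_0^t \phi^L_s(x) \bigl\{ -(\bar{\rho}^N \ast u_s)^2(\partial_z \Theta^L_x)\,\dd s - \Pi_0((u^L_s(x))^2)\,\dd s - K^{L,N}\,\dd s \bigr\},
\]
where $\bar{\rho}^N$ is the periodization of $\rho^N$ and $K^{L,N}$ is the constant (not depending on $x$, by translation invariance of $\T$) chosen so that the integrand has mean zero under $\mu$. The periodic analogue of Corollary~\ref{cor:kpz drift hoelder} (which is proved by the same Fourier argument, replacing integrals by sums) gives $R^{L,N}(\varphi)\to R^L(\varphi)$ in $L^2(\P)$ uniformly on $[0,T]$, so it suffices to treat $R^{L,N}$ as first $N\to\infty$ and then $L\to\infty$.

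Applying the Kipnis--Varadhan inequality (Corollary~\ref{cor:KV}, which is proved identically on $\T$) we bound
\[
  \mathbb{E}\bigl[\|R^{L,N}(\varphi)\|_{p-\tmop{var};[0,T]}^2\bigr] + \mathbb{E}\bigl[\sup_{t\leqslant T}|R^{L,N}_t(\varphi)|^2\bigr] \lesssim (T\vee T^2)\,\|r^{L,N}(\cdot,\varphi)\|_{-1}^2.
\]
The integrand $r^{L,N}(u,\varphi) = \int_\T \varphi(x)\phi^L_0(x)(W_2(g^{L,N}_x) - K^{L,N})\,\dd x$ has a second-chaos factor with symmetric kernel
\[
  g^{L,N}_x(y_1,y_2) = \int_\T (\bar{\rho}^L_x(z)-1)\bar{\rho}^N_z(y_1)\bar{\rho}^N_z(y_2)\,\dd z - \bigl(\bar{\rho}^L_x(y_1)\bar{\rho}^L_x(y_2) - \langle \bar{\rho}^L_{y_1}\bar{\rho}^L_{y_2},1\rangle_{L^2(\T)}\bigr).
\]
Performing the same two Malliavin integrations by parts as in Lemma~\ref{lem:rLN-decomposition}, the choice of $K^{L,N}$ kills the $B^{L,N}$ term and we obtain $\|r^{L,N}(\cdot,\varphi)\|_{-1}\lesssim A_1^{L,N}+C_1^{L,N}$, where $A_1^{L,N}$ and $C_1^{L,N}$ are defined in perfect analogy with~\eqref{eq:A1-def} and~\eqref{eq:C1-def}, now with $\dot{H}^{-1}(\T)$ norms and integrals over $\T$.

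The main work is then the periodic version of Lemma~\ref{lem:Gxx}: one needs $\sup_{x,x'\in\T,\,N\geqslant M(L)}\|G^{L,N}_{x,x'}\|_{\dot H^{-1}(\T)} \lesssim L^{-1/2}$. The probabilistic duality argument adapts directly if we introduce independent $\T$-valued random variables $\bar{R}^L_i,\bar{R}^N_j$ distributed according to the probability densities $\bar{\rho}^L,\bar{\rho}^N$. The crucial cancellations such as $\mathbb{E}[\bar{\rho}^N(x-y-\bar{R}^L_1)] = (\bar{\rho}^L\ast\bar{\rho}^N)(x-y) = \bar{\rho}^L(x-y)$ continue to hold for $N$ large enough by our support assumption on $\hat{\rho}$ (concretely, as soon as $\hat{\rho}(k/N)=1$ for all $|k|\le L$), and the small-increment estimates $\mathbb{E}[|\bar{R}^L_1|^{1/2}]\lesssim L^{-1/2}$ survive because $\bar{R}^L_1$ is still concentrated at scale $L^{-1}$ when $L$ is much smaller than the torus period. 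Once this bound is in place, the computations of Lemma~\ref{lem:A1} and Lemma~\ref{lem:C1} carry over line by line, in fact with some simplification: the role of $\sigma$ is played by the constant $1\in C^\infty(\T)$, so the $\|V^L\|_{L^1}$ and $\|\sigma\|_{L^2}$ factors become harmless constants and the same $L^{-1}$ rate for $A_1^{L,N}$ and $C_1^{L,N}$ emerges. Combining these estimates yields the claim.
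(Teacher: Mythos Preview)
Your proposal is correct and follows essentially the same approach as the paper: in Appendix~\ref{app:periodic} the authors simply declare that the argument of Section~\ref{sec:convergence} carries over once one replaces $\rho^L,\sigma$ by their periodic counterparts $\bar{\rho}^L,1$, uses $\T$-valued random variables $R^L\sim\bar{\rho}^L$, and writes increments as $f(x-R^L)-f(x)=\int_{x\,(\mathrm{mod}\,1)}^{(x-R^L)\,(\mathrm{mod}\,1)}\partial_y f(y)\,\dd y$ to recover the $\dot H^{-1}(\T)$ duality bounds. You have spelled out exactly these adaptations, including the correct observation that $K^{L,N}$ is independent of $x$ by translation invariance.
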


\begin{lemma}\label{lem:KL-periodic}
   The constant $K^L$ converges to $\lambda^2/12$ as $L \to \infty$.
\end{lemma}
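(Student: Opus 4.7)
The plan is to mimic the proof of Lemma~\ref{lem:KL}, exploiting the simplifications afforded by translation invariance on $\T$. First I would introduce an auxiliary approximant $K^{L,N}$ via the same Malliavin-calculus computation as in Lemma~\ref{lem:rLN-decomposition}, adapted to the torus by replacing the kernel $g_x^{L,N}$ by its periodic analog $g^{L,N}$ and setting
\[
   K^{L,N} \assign \lambda^2 \int_{\T^2} g^{L,N}(y_1,y_2)\, \Theta^L(y_1)\, \Theta^L(y_2)\, \dd y_1 \dd y_2.
\]
By translation invariance on $\T$ this quantity does not depend on $x$ (unlike $K^{L,N}_x$ in Lemma~\ref{lem:KL}), and the convergence $K^L = \lim_{N \to \infty} K^{L,N}$ then follows from the periodic analog of Corollary~\ref{cor:kpz drift hoelder}.

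Next, I would carry out the algebraic manipulations of Lemma~\ref{lem:KL} in the periodic setting. Using $\partial_z \Theta^L_x = \bar{\rho}^L_x - 1$ and unfolding $g^{L,N}$, one obtains (evaluating at $x = 0$, which is legitimate by translation invariance)
\begin{align*}
   \lambda^{-2} K^{L,N} &= \int_\T (\bar{\rho}^L(z) - 1)(\bar{\rho}^N \ast \bar{\rho}^L \ast \Theta)^2(z)\, \dd z \\
   &\quad - \bigl( (\bar{\rho}^L \ast \bar{\rho}^L \ast \Theta)^2(0) - \|\bar{\rho}^L \ast \bar{\rho}^L \ast \Theta\|_{L^2(\T)}^2 \bigr).
\end{align*}
Sending $N \to \infty$, the first term becomes $-\int_\T \partial_z(\bar{\rho}^L \ast \Theta) \cdot (\bar{\rho}^L \ast \Theta)^2\, \dd z = -\tfrac{1}{3}\int_\T \partial_z (\bar{\rho}^L \ast \Theta)^3\, \dd z = 0$ by periodicity, since $\bar{\rho}^L \ast \Theta \in C^\infty(\T)$.

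To pass to the limit $L \to \infty$, the crucial observation is that $(\bar{\rho}^L \ast \bar{\rho}^L \ast \Theta)(0) = 0$ \emph{exactly} for every $L$: since $\bar{\rho}$ (and hence $\bar{\rho}^L \ast \bar{\rho}^L$) is even, while $\Theta(y) = \tfrac{1}{2} - y$ on $[0,1)$ is antisymmetric about $0$ away from its unit jump at the origin, the integrand defining this convolution value is odd. Meanwhile, Plancherel together with dominated convergence (using $|\widehat{\bar{\rho}^L}(k)|^2 \to 1$ pointwise on $\Z$ with uniform bound $\|\bar{\rho}\|_{L^1(\R)}^2$) gives $\bar{\rho}^L \ast \bar{\rho}^L \ast \Theta \to \Theta$ in $L^2(\T)$, so
\[
   \|\bar{\rho}^L \ast \bar{\rho}^L \ast \Theta\|_{L^2(\T)}^2 \to \int_0^1 \bigl(\tfrac{1}{2} - z\bigr)^2 \dd z = \frac{1}{12},
\]
and combining gives $K^L \to -\lambda^2 (0 - \tfrac{1}{12}) = \lambda^2/12$. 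The main obstacle is bookkeeping rather than conceptual: one has to verify that the Malliavin identity and the ``infinity'' subtraction go through on $\T$ with the role of $\sigma$ played by the constant function $1$, so that the resulting $K^{L,N}$ really renders the remainder process $R^{L,N}_t(\varphi)$ mean-zero and amenable to the Kipnis--Varadhan bound used in Lemma~\ref{lem:remainder-periodic}; the simplification compared to Lemma~\ref{lem:KL} is that no cancellation of an $x$-dependent piece is required, thanks to the exact vanishing at the origin.
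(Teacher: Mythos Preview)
Your proposal is correct and follows essentially the same route as the paper's proof. The only cosmetic differences are that you kill the cross term $(\bar{\rho}^L\ast\bar{\rho}^L\ast\Theta)(0)$ by an even--odd symmetry argument, whereas the paper writes it as $\langle \Theta^L,\bar{\rho}^L\rangle_{L^2(\T)}=\int_\T \Theta^L(\bar{\rho}^L-1)=\tfrac12\int_\T\partial_y(\Theta^L)^2=0$, and you compute the final $L^2$ norm by passing to the limit $\Theta$ and integrating $(\tfrac12-z)^2$, whereas the paper stays in Fourier space and evaluates $\sum_{k\ge 1}k^{-2}=\pi^2/6$; both pairs of arguments are equivalent.
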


\begin{lemma}\label{lem:constant process-periodic}
  For all $T > 0$ the process $(Q^L_t)_{t \in [0, T]}$ converges in probability in $C^{3 / 4 -}([0, T], \mathbb{R})$ to the zero quadratic variation process
 \[
   Q_t \assign - \int_0^t u^{\diamond 2}_s \dd s (1) + t , \qquad t \in [0,T].
\]
\end{lemma}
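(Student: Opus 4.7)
My plan is to reduce everything to the periodic analogue of Corollary~\ref{cor:kpz drift hoelder} applied to the constant test function $\varphi\equiv 1\in C^\infty(\T)$. As noted at the start of this appendix, the construction of $\int_0^\cdot u_s^{\diamond 2}\,\dd s$ via Proposition~\ref{prop:kpz drift} and its $C^{3/4-}$ time regularity from Corollary~\ref{cor:kpz drift hoelder} both transfer verbatim to the torus with only cosmetic changes: integrals over $\R^2$ in Fourier are replaced by sums over $\Z^2$ (which are dominated by the corresponding integrals), the approximate identity $(\rho^N)$ is replaced by the periodisation $(\bar\rho^L)$, and the Wick constant $\|\rho^L\|_{L^2(\R)}^2$ differs from its genuinely periodic counterpart $\|\bar\rho^L\|_{L^2(\T)}^2$ only by a remainder vanishing rapidly in $L$ thanks to the compact Fourier support of $\rho$. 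Hence either renormalisation leads to the same limiting object.

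Granting this periodic version of Corollary~\ref{cor:kpz drift hoelder}, the first summand of $Q^L_t$, namely
\[
-\int_0^t \bigl\langle (u_s\ast\bar\rho^L)^2 - \|\bar\rho^L\|_{L^2(\R)}^2,\,1\bigr\rangle_{L^2(\T)}\,\dd s,
\]
converges in $L^p(\Omega; C^\alpha([0,T],\R))$ for every $\alpha<3/4$ and every $p\geqslant 1$ to $-\int_0^\cdot u_s^{\diamond 2}\,\dd s\,(1)$. The remaining summand equals the deterministic drift $t$, so adding the two contributions yields $Q^L\to Q$ in $L^p(\Omega; C^{3/4-}([0,T],\R))$ and therefore in probability in that space.

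The zero-quadratic-variation statement is then automatic from the H\"older regularity of $Q$: for any $\alpha>1/2$,
\[
\int_0^t \frac{1}{\varepsilon}(Q_{s+\varepsilon}-Q_s)^2\,\dd s \;\leqslant\; T\,\|Q\|_{C^\alpha([0,T],\R)}^2\,\varepsilon^{2\alpha-1} \to 0 \quad \text{as } \varepsilon\to 0,
\]
uniformly in $t\in[0,T]$, so $[Q]\equiv 0$ in the sense of Russo and Vallois. The only substantive point in the plan is checking the periodic analogue of Corollary~\ref{cor:kpz drift hoelder}, and I expect no real obstacle: its proof on the line relied purely on ultraviolet estimates and Plancherel in the $H^{-1/2}$ and $H^{-3/2}$ seminorms, both of which have immediate counterparts on the torus, with no infrared subtleties since the zero mode has been set to zero by assumption on $\eta$.
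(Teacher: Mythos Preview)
Your proposal is correct and follows essentially the same approach as the paper: apply the periodic analogue of Corollary~\ref{cor:kpz drift hoelder} with $\varphi\equiv 1$ to handle the Wick-renormalised square, observe that the remaining term is just the deterministic drift $t$, and read off the zero quadratic variation from the $C^{3/4-}$ regularity. The paper's own argument in Section~\ref{sec:convergence} for the non-periodic Lemma~\ref{lem:constant process} is exactly this (and for the periodic case the paper simply says the proof is analogous); your version is in fact slightly more careful in that you spell out the Wick-constant discrepancy and the quadratic-variation computation explicitly.
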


To prove these lemmas we follow the argumentation in Section~\ref{sec:convergence}. Here the kernel $g^{L,N}_x$ takes the form
\begin{equation}
  g_x^{L, N} (y_1, y_2) \assign  \int_{\mathbb{T}} \dd z (\bar{\rho}^L_x (z) - 1) \bar{\rho}^N_z (y_1) \bar{\rho}^N_z \left( y_2 \right) - (\bar{\rho}^L_x (y_1) \bar{\rho}^L_x (y_2) - \langle \bar{\rho}^L_{y_1} \bar{\rho}^L_{y_2}, 1 \rangle_{L^2 (\T)}),
\end{equation}
and as in Section~\ref{sec:convergence} we see that we should choose
\[
   K^{L, N}_x \assign \lambda^2 \int_{\mathbb{T}^2} g_x^{L, N} (y_1, y_2) \Theta^L_x (y_1) \Theta^L_x (y_2) \dd y_1 \dd y_2.
\]
The proof of Lemma~\ref{lem:KL-periodic} is not a trivial modification of the one of Lemma~\ref{lem:KL}, so we provide the required arguments.

\begin{proof}[Proof of Lemma~\ref{lem:KL-periodic}]
   Sending $N \to \infty$ and using that $\Theta^L_x(y) = \Theta^L(x-y)$ and $\bar{\rho}^L_x(y) = \bar{\rho}^L(x-y)$ we get
   \[
      \lim_{N \to \infty} K^{L,N}_x = K^L \assign \lambda^2 \int_{\T}  \dd z (\bar{\rho}^L_x (z) - 1)  \Theta^L_{x} (z) \Theta^L_{x} (z) - \lambda^2 (\langle \Theta^L, \bar{\rho}^L\rangle_{L^2(\T)}^2  - \langle (\bar{\rho}^L \ast \Theta^L)_x^2, 1 \rangle_{L^2 (\T)}),
   \]
   and the first term on the right hand side is $\lambda^2 (-1/3) \int_\T \dd z \partial_{z} (\Theta^L_x(z)^3) = 0$.  Moreover, also
   \[
     \langle \Theta^L, \bar{\rho}^L\rangle_{L^2(\T)} = \int_\T \Theta^L(y) \bar{\rho}^L(y) \dd y = \int_\T \Theta^L(y) (\bar{\rho}^L(y) - 1) \dd y = \int_\T \frac{1}{2} \partial_y (\Theta^L(y)^2) \dd y = 0,
   \]
   and therefore we remain with
   \begin{align*}
      \lambda^{-2} K^L & = \langle (\bar{\rho}^L \ast \Theta^L)_x^2, 1 \rangle_{L^2 (\T)} = \| \bar{\rho}^L \ast \Theta^L \|_{L^2(\T)}^2 = \sum_{k \in \Z} |\CF_\T (\bar{\rho}^L \ast \Theta^L)(k)|^2 \\
      & = \sum_{k \in \Z \setminus\{0\}} \frac{|\rho(L^{-1}k)|^2}{(2\pi k)^2} = \frac{1}{2\pi^2} \sum_{k=1}^\infty \frac{|\rho(L^{-1}k)|^2}{k^2} \\
      & \xrightarrow[]{L \to \infty} \frac{1}{2\pi^2} \sum_{k=1}^\infty \frac{1}{k^2}= \frac{1}{12},
   \end{align*}
   which concludes the proof.
\end{proof}

The rest of the proof is completely analogous to the non-periodic case. Let us just point out that if $f \in C^\infty(T)$, then
\[
   \int_\T (f(x - y) - f(x)) \bar{\rho}^L(y) \dd y = \E[f(x - R^L) - f(x)],
\]
where $R^L$ is a random variable with density $\bar{\rho}^L$, and that
\[
  f(x-R^L_1) - f(x) = \int_{x(\mathrm{mod} 1)}^{(x-R^L_1) (\mathrm{mod} 1)} \partial_y f(y) \dd y,
\]
and therefore the same line of argumentation as in Section~\ref{sec:convergence} yields Lemma~\ref{lem:remainder-periodic} and Lemma~\ref{lem:constant process-periodic}. From here we follow the same steps as in the proof of Theorem~\ref{thm:burgers-uniqueness} to establish Theorem~\ref{thm:burgers-uniqueness-periodic}.

\end{document}